\newtheorem*{thm*}{Theorem}
\newtheorem{thm}{Theorem}[section]
\newtheorem{prop}[thm]{Proposition}
\newtheorem{lem}[thm]{Lemma}
\newtheorem{cor}[thm]{Corollary}
\newtheorem{que}[thm]{Question}
\theoremstyle{definition}
\newtheorem{defn}[thm]{Definition}
\newtheorem{rem}[thm]{Remark}
\newtheorem{construction}[thm]{Construction}
\newcommand{\Diff}{\mathrm{Diff}}
\newcommand{\Ham}{\mathrm{Ham}}
\def\Homeo{\mathrm{Homeo}}
\def\Hameo{\mathrm{Hameo}}
\def\FHomeo{\mathrm{FHomeo}}
\def\id{\mathrm{Id}}
\def\Cal{\mathrm{Cal}}
\def\Ru{\mathrm{Ru}}
\def\supp{\mathrm{supp}}
\def\val{\mathrm{val}}
\newcommand{\Sym}{\operatorname{Sym}}
\newcommand{\bD}{\mathbb{D}}
\newcommand{\bR}{\mathbb{R}}
\newcommand{\R}{\mathbb{R}}
\newcommand{\D}{\mathbb{D}}
\renewcommand{\S}{\mathbb{S}}
\newcommand{\bZ}{\mathbb{Z}}
\newcommand{\ul}{\underline}
\definecolor{sobhan}{rgb}{0,.6,0}
\definecolor{vincent}{rgb}{1.0,0.3,0}
\definecolor{dan}{rgb}{.5,.2,1}
\definecolor{cym}{rgb}{.7,0,.4}
\begin{document}
\title{Subleading asymptotics of link spectral invariants and homeomorphism groups of surfaces}

\author{Dan Cristofaro-Gardiner, Vincent Humili\`ere, Cheuk Yu Mak
\\Sobhan Seyfaddini and Ivan Smith}

\maketitle

\begin{abstract}
This paper continues the study of  link spectral invariants on compact surfaces, introduced in our previous work and shown to satisfy a Weyl law in which they asymptotically recover the Calabi invariant. Here we study their subleading asymptotics on surfaces of genus zero. We show the subleading asymptotics are bounded for smooth time-dependent Hamiltonians, and recover the Ruelle invariant for autonomous disc maps with finitely many critical values. We deduce that the Calabi homomorphism admits infinitely many extensions to the group of compactly supported area-preserving homeomorphisms, and that the kernel of the Calabi homomorphism on the group of hameomorphisms is not simple.

%
%
%
\end{abstract}

\textsc{French title:} ``Asymptotique sous-dominante des invariants spectraux d'entrelacs et groupes d'homéomorphismes de surfaces.''

\textsc{French abstract}

Cet article poursuit l'étude des invariants spectraux d'entrelacs introduits dans notre précédent travail, dans lequel il est établi qu'ils vérifient une loi de Weyl faisant apparaître l'invariant de Calabi asymptotiquement. Nous étudions ici leur asymptotique sous-dominante sur les surfaces de genre nul. Nous montrons que celle-ci est bornée pour tous les hamiltoniens lisses dépendant du temps, et qu'elle fait apparaître l'invariant de Ruelle pour les hamiltoniens autonomes du disque ayant un nombre fini de valeurs critiques. Nous en déduisons que le morphisme de Calabi admet une infinité d'extensions au groupe des homéomorphismes à support compact qui préservent l'aire, et que le noyau du morphisme de Calabi sur le groupe des haméomorphismes n'est pas simple.

\setcounter{tocdepth}{1}
\tableofcontents


\section{Introduction}

\subsection{Area-preserving homeomorphisms of surfaces}
Let $(M,\omega)$ be a compact manifold possibly with boundary, equipped with a volume-form, and consider the group $\Homeo_{\mathrm{c}}(M,\omega)$ of volume-preserving homeomorphisms that are the identity near the boundary, in the component of the identity.  

When the dimension of $M$ is at least three, there is a clear picture due to Fathi regarding the algebraic structure of this group: there is a mass-flow homomorphism, and its kernel is a simple group.  In contrast, in dimension two the situation is much less understood despite the fact that many decades have passed since Fathi's work. 

We refer the reader to \cite{CGHMSS21} for background and a definition of the mass-flow homomorphism (op.cit. Sec 2.3).
In the cases that we will be mainly concerned with here, that is the disc and the sphere, the kernel of mass-flow is just the group of area and orientation preserving homeomorphisms.

We recently showed \cite[Thm. 1.3]{CGHMSS21} that when $\dim(M) = 2$, the kernel of mass-flow is never simple.   In fact, it contains as a proper normal subgroup the group $\Hameo(M,\omega)$ of {\em hameomorphisms}, whose definition we review in Definition \ref{def:hameo}.  When $M$ has boundary, we also showed that the classical Calabi homomorphism, which we review in Definition \ref{def:Calabi} and which measures the average rotation of the map, extends to $\Hameo$ from the group $\Ham_{\mathrm{c}}(M,\omega)$ of Hamiltonian diffeomorphisms that are the identity near the boundary.  It is then natural to ask the following.

\begin{que}
\label{que:main}
When $M$ is closed, is $\Hameo(M,\omega)$ simple?  When $M$ has non-empty boundary, is the kernel of Calabi on $\Hameo(M,\omega)$ simple?
\end{que}

This is an old question.  For example, a variant appears in 
\cite[Problem (4)]{muller-oh}.  Let us briefly explain why one might hope for a positive answer.  Hameomorphisms are homeomorphisms with well-defined Hamiltonians, and it is natural to wonder whether the algebraic structure of the group of hameomorphisms could be like that of the group $\Ham_{\mathrm{c}}(M,\omega)$;  moreover, Banyaga showed \cite{Banyaga} that $\Ham_{\mathrm{c}}$ is simple when $M$ is closed and the kernel of Calabi is simple when $M$ has boundary.  

Our first result shows that the structure of $\Hameo$ is more complicated than this.

\begin{thm}
\label{thm:notsimple} The following groups are not perfect:
\begin{enumerate}
\item The kernel of Calabi on $\mathrm{Hameo}(\mathbb{D}^2,\omega)$.
\item The group $\mathrm{Hameo}(\mathbb{S}^2,\omega)$.
\end{enumerate}
Both admit surjective group homomorphisms to $\mathbb{R}$.
\end{thm}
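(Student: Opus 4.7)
I would construct, in each case, a proper non-trivial normal subgroup of the relevant group, built from the subleading asymptotics of the link spectral invariants. Let $c_d$ denote the link spectral invariant of degree $d$, so that the Weyl law of \cite{CGHMSS21} reads $c_d(\phi)/d \to \Cal(\phi)$ as $d\to\infty$. Let $\sigma_d(\phi) := c_d(\phi) - d \cdot \Cal(\phi)$ be the subleading term, and define
\[
N \;:=\; \bigl\{\, \phi \in \Hameo \,:\, |\sigma_d(\phi)| = O(1)\text{ as }d\to\infty \,\bigr\}.
\]
On $\D^2$ I would intersect with the kernel of the extended Calabi homomorphism; on $\S^2$ no modification is needed. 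The theorem follows once $N$ is shown to be (i) a normal subgroup, (ii) non-trivial, (iii) proper.

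\textbf{Normality and subgroup property.}
The spectral invariants extend from $\Ham_c$ to $\Hameo$ by Hofer continuity as in \cite{CGHMSS21}; the same continuity passes to each $\sigma_d$ once the leading-order Calabi term is subtracted. Conjugation invariance is built into the Floer-theoretic definition of the link invariants, so $N$ is invariant under conjugation in the ambient group. For closure under multiplication one needs an approximate subadditivity
\[
|\sigma_d(\phi\psi)| \;\leq\; |\sigma_d(\phi)| + |\sigma_d(\psi)| + C,
\]
with $C$ independent of $d$, which is an immediate consequence of the standard pair-of-pants triangle inequality for $c_d$ after the Calabi terms are subtracted off (they add exactly). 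Closure under inverses is analogous, using Poincaré duality for $\PFH$.

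\textbf{Non-triviality and properness.}
That $N$ is non-trivial is the main technical result highlighted in the abstract: subleading asymptotics for smooth, possibly time-dependent, Hamiltonians are $O(1)$, so $\Ham_c \subset N$. For properness I would exhibit a hameomorphism $\phi_\infty$ with ``infinite Ruelle invariant''. Take a sequence of pairwise disjoint shrinking discs $B_n \subset \D^2$ (resp.\ $\S^2$) and smooth autonomous Hamiltonians $h_n$ supported in $B_n$ which rotate near the centre of $B_n$ at speeds producing Ruelle invariants $r_n \to \infty$, while the Hofer norms $\|h_n\|$ are chosen summable. The infinite composition $\phi_\infty$ is then the Hofer (hence hameomorphism) limit of its partial products. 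Using the identification of $\sigma_d$ with (a multiple of) the Ruelle invariant on smooth autonomous pieces, additivity of $\sigma_d$ on disjointly supported maps, and lower semicontinuity of $\sigma_d$ under the Hofer/$C^0$ topology, one concludes that $\sigma_d(\phi_\infty)$ is not $O(1)$, so $\phi_\infty \notin N$.

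\textbf{Main obstacle.}
The delicate point is building $\phi_\infty$ so that it is simultaneously (a) a bona fide hameomorphism, which forces the generators to be Hofer-Cauchy and the partial products to be $C^0$-Cauchy, and (b) carries accumulated Ruelle/subleading contributions that survive in the limit. These pull in opposite directions: small Hofer energy encourages small rotations, while diverging Ruelle requires large total twisting. The balance is achievable because the Ruelle invariant is scale-invariant under simultaneous rescaling of area and angular speed, whereas Hofer norm scales linearly with the Hamiltonian amplitude; choosing $B_n$, amplitudes, and speeds along this one-parameter freedom will yield $\|h_n\| \to 0$ with $r_n \to \infty$. The other subtle step is the limit statement itself: one must verify that lower semicontinuity of $\sigma_d$ holds under the relevant Hofer/$C^0$-limits uniformly in $d$, so that the infinite contribution does not get washed out in the passage to the closure.
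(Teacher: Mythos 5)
Your high-level plan matches the paper's: build a proper normal subgroup out of the condition ``subleading asymptotics are $O(1)$,'' show normality via the small-defect estimate, show non-triviality by proving smooth maps are in, and show properness by constructing a hameomorphism with diverging subleading term. However, there are two genuine gaps.

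First, the claim that ``on $\S^2$ no modification is needed'' is wrong, and this is exactly where the paper has to work harder. The quasimorphisms that are $C^0$-continuous (hence extend to $\Hameo$), namely $f_k = \mu_k-\mu_1$, have defect $\frac{2}{k}+2$ on the sphere, not $\frac{2}{k}$; the extra $+2$ comes from the $\mu_1$ subtraction and does \emph{not} decay in $k$. So $\{\,\psi : |k f_k(\psi)| = O(1)\,\}$ is not obviously closed under composition — multiplying the defect by $k$ now gives $\sim 2k$, which diverges. The $\mu_k$ themselves have the right defect but are not $C^0$-continuous and hence do not straightforwardly extend to $\Hameo$. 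The paper escapes this by using $g_k := \mu_{2^k-1}-\mu_{2^{k-1}-1}$, which is $C^0$-continuous (as a difference of $\mu$'s), and whose defect $\frac{2}{2^k-1}+\frac{2}{2^{k-1}-1}$ decays fast enough that $\{\,\psi : |(2^k-1)g_k(\psi)|=O(1)\,\}$ is a normal subgroup. You need some version of this device. Relatedly, your ``immediate consequence of the pair-of-pants triangle inequality'' for the uniform-in-$d$ defect bound is glossing over the central technical input of \cite{CGHMSS21}: a generic Floer triangle inequality only gives a defect that is $O(1)$ for the \emph{raw} invariant $c_{\Sym(\ul{L})}$; that this becomes $\frac{2}{k}$ after the $\frac{1}{k}$-normalization is what makes the whole game possible.

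Second, your properness construction is both different from the paper's and not clearly workable as described. You propose disjointly supported smooth bumps $h_n$ with $\|h_n\|$ summable but $\Ru(h_n)\to\infty$, invoking ``scale-invariance'' of Ruelle against ``linear scaling'' of Hofer norm. But by the Gambaudo--Ghys formula $\Ru(H)=\int n_H(\xi)\,d\xi$, the Ruelle invariant is, like the Hofer norm, blind to spatial scale and controlled by the range of critical values; for a single twist bump of amplitude $A$ one has $\Ru\approx A\approx\|h\|$, so there is no scaling knob of the kind you describe. (One could try to decouple them via Hamiltonians with many nested critical circles, but then the interaction with the subleading asymptotics requires more than the theorem proved for finitely-many-critical-values autonomous Hamiltonians, which is a limit statement in $k$, not a finite-$k$ identity.) Moreover you then invoke ``additivity of $\sigma_d$ on disjointly supported maps'' and ``lower semicontinuity of $\sigma_d$ uniformly in $d$,'' neither of which is established; the uniformity is exactly the dangerous point. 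The paper instead takes a single autonomous profile $h(z)=\sqrt{2/(1+z)}$, which blows up at one point (so has finite Calabi, infinite ``Ruelle''), shows the associated twist lies in $\Hameo$ via a Sikorav-type displacement trick (not via summable Hofer norms of the $h_n$ themselves), and computes $k(f_k(T)-\Cal(T))$ directly as a Riemann-sum divergence using the Lagrangian control formula, without ever invoking the Ruelle theorem or any uniform semicontinuity. That direct computation is what makes the properness argument bulletproof, and you should replace your heuristic construction with something of this shape.
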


Recall that a group $G$ is called {\em perfect} if it coincides with its commutator subgroup $[G,G]$.  Note that, since the commutator subgroup is always normal, every (non-abelian) simple group is perfect and hence we conclude that neither of the groups appearing in the above theorem are simple.

\subsection{A two-term Weyl law}

Theorem~\ref{thm:notsimple} is proved by studying the asymptotics of the ``link spectral invariants'' defined in our previous work  \cite[Thm. 1.13, Def. 6.14, Eq. (59)]{CGHMSS21}. In Sec. 7.3 of the same work 
we defined quasimorphisms
\[ \mu_k : \Diff(\S^2,\omega) \to \mathbb{R}, \quad f_k: \Homeo_{\mathrm{c}}(\D^2,\omega) \to \mathbb{R} \]
and we showed that these satisfy the important asymptotic formulae
\begin{equation}
\label{eqn:weyl}
\lim_{k \to \infty} f_k(g) = \Cal(g) 
\end{equation}
on $\text{Diff}_{\mathrm{c}}(\D^2,\omega)$,  and 

\[\lim_{k \to \infty} \mu_k(g) = 0.\]
We called this the ``Calabi property".  Here, $\Cal$ denotes the aforementioned Calabi homomorphism and $\Diff_{\mathrm{c}}$ denotes the group of diffeomorphisms that are the identity near the boundary and that preserve $\omega$, which we note for the reader coincides with the group $\Ham_{\mathrm{c}}$ in the above cases.  We refer the reader to our review in Section~\ref{sec:prelim} for more details about the $\mu_k$ and $f_k$.

The above formulas are kinds of Weyl laws. For specialists, we note that the convergence to zero for the $\mu_k$ is what one would hope for in a Weyl law, since these invariants are defined via mean normalization of Hamiltonians. It is natural to ask what can be said about the subleading asymptotics.   With many seemingly similar kinds of Weyl laws, this tends to be a hard question.  For example, the above Calabi property was inspired by an analogous Weyl law for the related ``ECH spectral invariants" defined in \cite{QECH}, see \cite{CGHR}.  For these spectral invariants,  all that is known is a bound on the growth rate of the subleading asymptotics \cite{CGS} that is likely far from optimal, with the conjectural bound being $O(1)$ \cite{HutchingsConjecture}.

In contrast, it turns out that we are able to say quite a lot about the subleading asymptotics of the $\mu_k$.   To state our result, let $\Ru$ denote the Ruelle invariant from  \cite{Ruelle}  (see also \cite{Gambaudo-Ghys, Ghys_ICM}), which we review in Section \ref{ss:Ruelle}.  We now state a result that is central to our proof of Theorem~\ref{thm:notsimple} and which is also of independent interest.   

\begin{thm}\label{t:O(1)}
If $\psi \in \Diff(\S^2,\omega)$ (resp. $\psi \in \Diff_{\mathrm{c}}(\D^2,\omega) \cap \ker(\Cal)$), then the sequence $\{k\,\mu_k(\psi)\}_{k \in \mathbb{N}}$ 
(resp. $\{k\,f_k(\psi)\}_{k \in \mathbb{N}})$ is bounded.  In fact, if $\psi = \phi^1_H$, where  $H:\D^2 \to \mathbb{R}$  is an autonomous and compactly supported Hamiltonian on the disc with finitely many critical values, then
\begin{equation}
\label{eqn:twoterm} 
\lim_k k\mu_k(\psi) = \lim_k k(f_k(\psi) - \Cal(\psi)) =  \Cal(\psi) - \frac{1}{2} \Ru(\psi).
\end{equation}
\end{thm}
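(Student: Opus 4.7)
The proof naturally decomposes into two tasks: (i) establishing the explicit two-term formula \eqref{eqn:twoterm} for autonomous $H$ with finitely many critical values, and (ii) proving the uniform $O(1)$ bound on $\{k\mu_k(\psi)\}$ and $\{k(f_k(\psi)-\Cal(\psi))\}$ for general smooth $\psi$. My plan is to attack (i) directly via the min-max description of $\mu_k$ and $f_k$, and to derive (ii) as a corollary using continuity and the quasimorphism property.

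For (i), I would work from the spectral description of the link spectral invariants from \cite{CGHMSS21}: $f_k$ and $\mu_k$ arise from a min-max for an action functional on loops in the $k$-fold configuration space of the surface, and by spectrality they equal $1/k$ times the action of a specific critical configuration. When $\psi = \phi^1_H$ is autonomous with $H$ admitting only finitely many critical values, a small Morse perturbation $H_\varepsilon$ reduces matters to the situation in which the relevant critical configurations are unordered $k$-tuples $(p_1,\dots,p_k)$ of critical points of $H_\varepsilon$. At such a configuration the action decomposes as
\begin{equation*}
\mathcal{A}_k(p_1,\dots,p_k) \;=\; \sum_{j=1}^{k} H_\varepsilon(p_j) \;+\; R_k(p_1,\dots,p_k),
\end{equation*}
where $R_k$ encodes the local rotation numbers of the autonomous flow at each $p_j$ (via a Conley--Zehnder/winding contribution to the action). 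Dividing by $k$ and passing to the limit, the first sum already reproduces the Weyl law $\lim_k f_k(\psi)=\Cal(\psi)$ of \eqref{eqn:weyl}. The heart of the autonomous computation is then to show that the $O(1/k)$-level correction $R_k/k$, properly selected by the min-max, accumulates into a Riemann-sum approximation of $\int_{\D^2}\rho_\psi\,\omega = \Ru(\psi)$, and to track the normalizations (the factor $\tfrac{1}{2}$ in front of $\Ru$ should emerge from the Conley--Zehnder-to-rotation-number conversion, and the additional $\Cal$ term from the mean-normalization used to define $\mu_k$). A separate check is needed that the Morse perturbation does not affect the limit, which is plausible since $\Cal$ and $\Ru$ are both continuous under the approximation $H_\varepsilon \to H$.

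For (ii), the principal difficulty is that standard Hofer-Lipschitz control on $\mu_k$ and $f_k$ only translates into $O(k)$ bounds after multiplication by $k$, so some genuinely new input is needed. The natural route is to show that the quasimorphism defect $D(\mu_k)$ is itself $O(1/k)$; combined with the autonomous formula proved in (i) and a factorization of an arbitrary smooth Hamiltonian diffeomorphism as a uniformly bounded product of autonomous ones (in the spirit of Banyaga), this would transfer the $O(1)$ bound to any smooth $\psi$. Alternatively, one could approximate $\psi$ in a sufficiently strong topology by autonomous $\phi^1_{H_n}$ to which (i) applies, and control the resulting error in $k\mu_k$ via a sharpened continuity estimate on the critical action $\mathcal{A}_k$. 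The main obstacle I anticipate is producing this sharpened defect/continuity estimate, since the asymptotic cancellation responsible for the $O(1)$ bound is invisible to naive Hofer-Lipschitz arguments and must be read off from the specific structure of the symmetric-product action functional.
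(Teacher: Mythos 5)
Your decomposition into (i) an explicit formula for nice autonomous $H$ and (ii) an $O(1)$ bound for general smooth $\psi$ matches the paper, but both halves of your plan diverge from what the paper does, and in each case there is a gap.

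For (ii), the key idea you are missing is the use of the simplicity of $\Diff(\S^2,\omega)$. The paper sets $G_{O(1)}:=\{\psi\in\Diff(\S^2,\omega):k\mu_k(\psi)=O(1)\}$; the defect bound $2/k$ from Lemma~\ref{lem:defect} shows immediately that $G_{O(1)}$ is closed under products and inverses, and conjugation-invariance of $\mu_k$ makes it normal. Since $\Diff(\S^2,\omega)$ is simple, one only needs a single non-identity element of $G_{O(1)}$, and the height function $H(z)=z$ works: by the Lagrangian control property applied to the link of horizontal circles, $\mu_k(H)=0$ for all $k$. The disc case then follows by restriction. No factorization into autonomous pieces is needed, and no ``sharpened defect/continuity estimate'' beyond what Lemma~\ref{lem:defect} already provides. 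Your alternative route through Banyaga-style factorization has a genuine circularity problem: to conclude $O(1)$ for $\psi$, you would need each autonomous factor to lie in $G_{O(1)}$, but for that you want to invoke (i), which you only have for autonomous Hamiltonians with finitely many critical values; there is no a priori reason the factorization can be arranged with that regularity. The simplicity argument removes all of this.

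For (i), your proposed min-max/spectrality mechanism is essentially the content of Section~\ref{sec:heuristic-ruelle} of the paper, which the authors themselves present only as a heuristic: identifying the action of the generator carrying the spectral invariant decomposes the problem into Riemann-sum and equidistribution statements (Assumptions~\ref{a:1}--\ref{a:3}) that are Birkhoff-type ergodic claims they cannot prove. A further issue is that the generators of the relevant complex are Hamiltonian chords from $\Sym(\ul{L})$ to itself in the symmetric product, not unordered $k$-tuples of critical points of $H$, and the $\mu_k$ are homogenized limits over iterates, so the ``critical configuration'' picture does not directly apply. The paper's actual proof of the autonomous formula (Proposition~\ref{p:ruelle}/Theorem~\ref{t:ruelle}) avoids spectrality entirely: it builds, for each $k$, a monotone link $\ul{L}_k$ almost all of whose components lie in level sets of $H$ (adapted to the Reeb graph of $H$), and applies the Lagrangian control bounds of Remark~\ref{r:strongerLagcontrol} to pin down $k\mu_k(H)$ up to $o(1)$; the $-\tfrac12\Ru(H)$ correction drops out of a Taylor expansion of $H$ across the thin annuli between consecutive link components, combined with a combinatorial identity on the Reeb tree. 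This is a soft, direct computation with the axiomatic properties of the spectral invariants rather than with the Floer complex itself.
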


A similar result concerning the subleading asymptotics of the $\mu_k$ in the case of autonomous Hamiltonians on the sphere with finitely many critical values also holds, but for brevity (and because the Ruelle invariant is not defined over the sphere without further choices), we do not state it.

\begin{rem}
In the statement of the above theorem, we are implicitly invoking the fact that we can regard any $\psi \in \Diff_{\mathrm{c}}(\D^2,\omega)$ as a map of the two-sphere by embedding $\D^2$ as a hemisphere and extending by the identity; 
 we will continue to do this throughout this paper.  The invariants $\mu_k$ and $f_k$ can be thought of as invariants of (possibly time-dependent) Hamiltonians as well, by setting $\mu_k(H) := \mu_k(\phi^1_H)$ and $f_k(H) := f_k(\phi^1_H).$  This viewpoint is helpful and adopted in \cite[Sec. 3]{CGHMSS21}, as well as Section \ref{s:subleading} here. 
\end{rem}

In view of Theorem~\ref{t:O(1)} it is natural to ask if \eqref{eqn:twoterm} holds more generally.  For the aforementioned ECH spectral invariants, essentially the same question was asked, under a genericity assumption on the contact form \cite{HutchingsConjecture}.  In the ECH case, simple examples exist, for example the boundary of the round sphere, with no well-defined subleading asymptotic limit at all; in this sense, then, the genericity assumption can not be dropped.  In our case, however, we know of no such analog, and indeed Theorem~\ref{t:O(1)} asserts that in the simplest cases, the subleading asymptotics in fact always recover Ruelle.  We therefore pose as a question the following.

\begin{que}
\label{que:twoterm}
Is it the case that for any $\psi \in \Diff_{\mathrm{c}}(\bD^2,\omega)$, 
\[ \lim_k k\mu_k(\psi) = \lim_k k(f_k(\psi) - \Cal(\psi)) =  \Cal(\psi) - \frac{1}{2} \Ru(\psi)?\]
\end{que}

We emphasize that, in contrast to the ECH case, we are not requiring any genericity in $\psi$ in the above question.  

\begin{rem}
\label{rem:pfh}
 If it was known that homogenized PFH spectral invariants \cite{CGHS21} are quasi-morphisms, it would follow from \cite{Chen2} that they agree with the $\mu_k$ and hence satisfy a two-term Weyl law; the simpler one-term Weyl law is established in \cite{CGPZ, EH}.
\end{rem}



\subsection{Infinitely many extensions of Calabi and the simplicity Conjecture revisited}

Consideration of the asymptotics of the $\mu_k$ also leads to the resolution of an old question about the aforementioned Calabi homomorphism.

\begin{que} [\cite{fathi}]
\label{que:ext}
Does $\Cal:\Diff_{\mathrm{c}}(\mathbb{D}^2,\omega)\to\R$ extend to a group homomorphism $\Homeo_{\mathrm{c}}(\mathbb{D}^2,\omega)\to\R$ ?
\end{que}   

Question~\ref{que:ext} has a long history which is closely connected to the question of whether or not the group $\text{Homeo}_{\mathrm{c}}(\mathbb{D}^2, \omega)$ is simple; see for example \cite[Sec.\ 2.2]{Ghys_ICM}.  It is known that no $C^0$-continuous extension can exist, 
because the kernel of $\Cal$ is  $C^0$-dense.  It was also recently shown that this group is in fact not simple \cite{CGHS20}, resolving the longstanding ``simplicity Conjecture''.  However, the question of whether an extension as a homomorphism exists has remained open.   

One might guess that no such extension exists.  For example, many groups of homeomorphisms satisfy an automatic continuity property, see for example \cite{Mann21}, and as was stated above, it is known that a continuous extension can not exist; see also Remark~\ref{rem:zorn} below.   On the contrary, however, we have the following result.

\begin{thm}
\label{thm:cal}
The Calabi homomorphism admits infinitely many extensions to group homomorphisms $\Homeo_{\mathrm{c}}(\mathbb{D}^2, \omega)\to\R$.
\end{thm}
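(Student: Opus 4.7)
The strategy is to exhibit the extensions as Banach (ultrafilter) limits of the link spectral invariants $f_k: \Homeo_c(\D^2, \omega) \to \mathbb{R}$, which are defined on \emph{all} area-preserving homeomorphisms and which satisfy $f_k(g) \to \Cal(g)$ on $\Diff_c(\D^2,\omega)$ by \eqref{eqn:weyl}. For any free ultrafilter $\mathcal{U}$ on $\mathbb{N}$, set
$$
\tilde{\Cal}_{\mathcal{U}}(g) \; := \; \lim_{k \to \mathcal{U}} f_k(g).
$$
The plan is to show this is well-defined, is a homomorphism extending $\Cal$, and that distinct ultrafilters yield infinitely many distinct such homomorphisms.

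First I would check that $\{f_k(g)\}_k$ is bounded for every $g \in \Homeo_c(\D^2, \omega)$, so that the ultrafilter limit exists; this should follow from the a priori area-type bounds built into the construction of $f_k$ in \cite{CGHMSS21}, which are uniform in $k$ and $g$. Next I would verify that $\tilde{\Cal}_{\mathcal{U}}$ is a homomorphism. The essential point here is that the pointwise quasimorphism defect of $f_k$ must tend to zero as $k \to \infty$: heuristically, since Theorem~\ref{t:O(1)} says that $f_k$ is $O(1/k)$-close to the honest homomorphism $\Cal$ on smooth maps, any non-additivity of $f_k$ is of the same $O(1/k)$ order and is killed by the ultrafilter limit. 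Combined with \eqref{eqn:weyl} this gives $\tilde{\Cal}_{\mathcal{U}}|_{\Diff_c} = \Cal$, so each $\tilde{\Cal}_{\mathcal{U}}$ is a homomorphism extension of $\Cal$.

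To produce \emph{infinitely many} distinct extensions, I would construct a family $\{g_j\}_{j \in \mathbb{N}} \subset \Homeo_c(\D^2, \omega)$ whose sequences $\{f_k(g_j)\}_k$ have mutually distinguishable accumulation patterns, so that different ultrafilters separate them. The natural candidate — parallel to the ``hameomorphism with infinite Ruelle invariant'' alluded to in the abstract — is a $C^0$-limit of autonomous Hamiltonians supported on shrinking discs whose Ruelle invariants are chosen so that each smooth truncation contributes a controlled, Ruelle-dictated amount to $f_k$ (computable via Theorem~\ref{t:O(1)}), but only for prescribed ranges of $k$. By scheduling the truncations against $k$ one expects bounded but non-convergent sequences $\{f_k(g_j)\}$, with countably many independent subsequential limits that distinct ultrafilters can pick off.

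The main obstacle is this last step. $C^0$-continuous invariants on $\Homeo_c$ can detect no more than $\Cal$ itself (since $\ker \Cal$ is $C^0$-dense), and smooth maps are too rigid because their $f_k$ converge by Theorem~\ref{t:O(1)}. Engineering a homeomorphism whose $f_k$-sequence genuinely oscillates in a bounded range therefore requires a delicate balance between non-smoothness and size, which the infinite-Ruelle construction is precisely designed to deliver. Carrying out the estimates — in particular showing that each truncation's Ruelle contribution really is picked up by $f_k$ at the intended scale of $k$, and that infinitely many independent oscillation patterns can be realized simultaneously — is the crux of the argument.
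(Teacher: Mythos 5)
Your proposal is in the same spirit as the paper's proof --- both build an $\mathbb{R}$-linear functional from the tail behaviour of the sequence $(f_k(g))_k$ and use the defect bound $2/k$ from Lemma~\ref{lem:defect} to conclude that the resulting map is a genuine homomorphism --- but it has a gap at the very first step that the paper's version is specifically designed to avoid.

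\textbf{The gap: the sequences $\{f_k(g)\}_k$ are not bounded on all of $\Homeo_c(\mathbb{D}^2,\omega)$.} You assert that boundedness follows from ``a priori area-type bounds built into the construction of $f_k$,'' but this is false. For a fixed homeomorphism $g$, the sequence $f_k(g)$ can diverge as $k\to\infty$. The explicit counterexample is already in this paper: take the infinite twist generated by $h(z)=\tfrac{2}{1+z}$ near the south pole (cf.\ Remark~\ref{rem:fast-vs-toofast} and the discussion in \cite{CGHS20}). The Lagrangian control formula \eqref{eq:lag-control-fk} gives
\[
f_k(\phi_H^1)=\frac{1}{k}\sum_{i=1}^k h\!\left(-1+\tfrac{2i}{k+1}\right)=\frac{k+1}{k}\sum_{i=1}^k\frac{1}{i}\;\sim\;\log k,
\]
which is unbounded, and the $C^0$-limit of such flows lies in $\Homeo_c(\mathbb{D}^2,\omega)$. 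More dramatically, Proposition~\ref{prop:surj} shows that $S:g\mapsto(f_2(g),f_3(g),\dots)$ is surjective onto $\mathbb{R}^{\mathbb{N}}/\!\sim$, so one can prescribe essentially arbitrary (in particular wildly unbounded) sequences $(f_k(g))_k$. Consequently an ultrafilter limit $\lim_{k\to\mathcal U} f_k(g)$ simply does not exist in $\mathbb{R}$ for these $g$, and $\tilde{\Cal}_{\mathcal U}$ is not defined on the whole group.

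\textbf{How the paper sidesteps this.} Rather than taking a Banach limit (a functional on $\ell^\infty$), the paper works with the much larger quotient $R' = \mathbb{R}^{\mathbb{N}}/\!\sim$, where $s\sim t$ iff $s-t\to 0$, so that $S:G\to R'$ is a well-defined homomorphism on \emph{all} of $G$ regardless of boundedness (this is exactly where the $2/k$ defect is used). Then instead of an ultrafilter one uses Zorn's lemma to split off the line $\mathbb{R}\cdot(1,1,\dots)$ from $R'$ as an $\mathbb{R}$-vector space, producing an $\mathbb{R}$-linear $s:R'\to\mathbb{R}$ with $s(1,1,\dots)=1$. This handles unbounded tails with no issue, and the infinitude of extensions comes cheaply from the choice of complementary basis together with surjectivity of $S$. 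Your scheduled-truncation construction for producing oscillating $(f_k(g_j))_k$ is essentially what Proposition~\ref{prop:surj} does, so that part of your intuition is on target; but it does not rescue the ultrafilter-limit definition from the unboundedness problem. If you want to salvage the ultrafilter idea, you would need to replace it with a general Hamel-basis argument of exactly this sort, at which point you are running the paper's proof.
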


It follows from Theorem~\ref{thm:cal} that the group $\Homeo_{\mathrm{c}}(\mathbb{D}^2,\omega)$ is not simple.  This gives another proof of the aforementioned ``simplicity Conjecture".  
It should be emphasized that our proof uses the nontrivial construction of the $f_k$ from \cite[Thm. 7.7(iii)]{CGHMSS21} (see Sec \ref{s:qonS} below), so is not self-contained;  
on the other hand, it does give a new proof, deducing nonsimplicity purely algebraically from the existence of 
a geometrically constructed homomorphism out of $\Homeo_{\mathrm{c}}(\mathbb{D}^2,\omega)$.  This kind of argument for proving non-simplicity is much more in line with how non-simplicity is proved for related groups, see the summary in \cite[Sec. 1.1.1]{CGHS20}, so it is natural to hope for a proof like this.
Moreover, this perspective has value in finding new normal subgroups: to keep the introduction focused, we defer the precise statement regarding these subgroups to Section  \ref{sec:newnormal} below.  

\begin{rem}
The homomorphisms we construct in proving Theorem~\ref{thm:cal} are far from canonical.  On the other hand, we will see that our proof does give a natural extension of the Calabi homomorphism to a homomorphism 
$\Homeo_{\mathrm{c}}(\mathbb{D}^2, \omega)\to R'$, where $R'$ is a certain group containing $\mathbb{R}$ as a subgroup; see \eqref{eqn:defns} and \eqref{eqn:exten}.
\end{rem}

\subsection{Simplicity}

 Given Theorem~\ref{thm:notsimple}, it is natural to ask if {\em some} simple non-trivial normal subgroup of $\Homeo_{\mathrm{c}}(\mathbb{D}^2,\omega)$ exists.  After all, there certainly exist groups (e.g. $\mathbb{Z}$) with no simple normal subgroups at all.  

\begin{thm}
\label{thm:comm}
Let $G := \Homeo_{\mathrm{c}}(\Sigma,\omega)$, where $\Sigma$ is some compact surface. The commutator subgroup $[G,G]$ is simple.
\end{thm}

The proof of Theorem~\ref{thm:comm} is completely independent of our other results, and does not use link spectral invariants at all.  In fact, we should note that from a certain point of view, Theorem~\ref{thm:comm} is not too surprising.  Indeed, the commutator subgroup of $[G,G]$ is normal in $G$, so standard arguments as in \cite{fathi}, see in particular the exposition in \cite[Prop. 2.2]{CGHS20}), show that $[G,G]$ is perfect; and, for many transformation groups, perfectness and simplicity are equivalent.  

It would be very interesting to find a geometric characterization of $[G,G]$.  In the diffeomorphism case, Banyaga has shown \cite{Banyaga} that $[G,G]$ is the kernel of Cal.

\subsection{Themes of the proofs and outline of the paper}

A crucial fact for many of our arguments is the following estimate from \cite[Eq. (70)]{CGHMSS21} on the defect of the $f_k$.  (We refer the reader to \ref{sec:prelim-quasis} for preliminaries about quasimorphisms.)

\begin{lem}[\cite{CGHMSS21}, the proof of Thm. 7.6, 7.7 and Eq. (70)]\label{lem:defect}
The $f_k$ and $\mu_k$ are quasimorphisms of defect $\frac{2}{k}$.
\end{lem}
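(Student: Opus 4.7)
The approach is to deduce the defect bound $2/k$ from the triangle inequality for the underlying Floer-theoretic link spectral invariants, combined with Poincaré duality and the $1/k$ normalization built into the definitions of $\mu_k$ and $f_k$.

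Recall from \cite{CGHMSS21} that $\mu_k$ arises from a Floer-theoretic spectral number $\tilde c_k(\phi)$, associated to $\phi$ acting on a $k$-point configuration in $\Sym^k(\S^2)$ (or the analogous setup on the disc), by a procedure of homogenization and division by $k$:
$$\mu_k(\phi) \;=\; \lim_{n \to \infty} \frac{\tilde c_k(\phi^n)}{nk}.$$
The factor of $1/k$ is what ultimately produces the $1/k$ in the defect bound.

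The first ingredient I would use is the pair-of-pants triangle inequality $\tilde c_k(\phi\psi) \leq \tilde c_k(\phi) + \tilde c_k(\psi)$, which holds for the underlying Floer spectral invariant. The second is a Poincaré duality estimate of the form $|\tilde c_k(\phi) + \tilde c_k(\phi^{-1})| \leq 2$. Combining these two properties via the standard argument for producing homogeneous quasimorphisms from spectral invariants (bounding $\tilde c_k(\phi\psi)$ from below by writing it as $-\tilde c_k((\phi\psi)^{-1})$ up to the Poincaré-duality error, then using subadditivity on $\tilde c_k(\psi^{-1}\phi^{-1})$, and finally passing to the homogenization), one shows that $k\mu_k$ is a homogeneous quasimorphism with defect at most $2$. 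Dividing by $k$ scales this to defect $2/k$, and exactly the same argument gives the bound for $f_k$, using the embedding of $\D^2$ as a hemisphere of $\S^2$.

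The main obstacle is pinning down the precise constant $2$ in the Poincaré duality estimate, rather than some larger universal constant: this requires an explicit understanding of the Floer-theoretic units on $\Sym^k$ together with the action normalization used in the construction of $\tilde c_k$, and is really the technical heart of the lemma as established in \cite{CGHMSS21}. Once this constant is identified, the rest of the argument is a formal manipulation of subadditive functions.
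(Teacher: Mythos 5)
The lemma as stated has two parts, and the paper treats them very differently: the $\mu_k$ defect bound is simply quoted from \cite{CGHMSS21} (it appears as item (3) of Theorem~\ref{thm:mu-k}, attributed to Theorems 7.6--7.7 there), while the $f_k$ bound is the one thing the paper actually derives, in Section~\ref{sec:qm-on-disc}. Your sketch of the Floer-theoretic derivation of the $\mu_k$ bound --- subadditivity of the underlying Lagrangian spectral invariant, a Poincar\'e--duality-type estimate $|\tilde c_k(\phi) + \tilde c_k(\phi^{-1})| \le 2$, homogenization, and the $1/k$ normalization from $c_{\ul L} = \tfrac{1}{k}c_{\Sym(\ul L)}(\Sym(\cdot))$ --- is a reasonable high-level account of what goes on in \cite{CGHMSS21}, and the constant $2$ is at least consistent with the normalizations (for $k=1$ this is the Entov--Polterovich quasimorphism on $(\S^2, \omega)$ with $\int_{\S^2}\omega = 1$, which has defect $2$). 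So I have no quarrel with your description of the $\mu_k$ part, other than to note you are paraphrasing a proof that lives elsewhere.

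Where your proposal has a genuine gap is the $f_k$ case. You assert that ``exactly the same argument gives the bound for $f_k$, using the embedding of $\D^2$ as a hemisphere of $\S^2$,'' but $f_k$ is not itself a homogenized spectral invariant of the kind you are describing: it is the \emph{difference} $f_k = \mu_k - \mu_1$. On $\Homeo(\S^2,\omega)$ this difference only satisfies the defect bound $\tfrac{2}{k} + 2$ (Remark~\ref{rem:difference-of-qm} of the paper), not $\tfrac{2}{k}$, so simply invoking the same Floer argument cannot give the stated bound. The correct reduction --- and the only part of the lemma actually proved in this paper --- is the observation that once you restrict to a Hamiltonian $H$ compactly supported in a hemisphere, the Lagrangian control property forces $\mu_1(H) = 0$, hence $\mu_1(\phi^1_H) = -\Cal(\phi^1_H)$. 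In other words, on $\Diff_c(\D^2,\omega)$ the quasimorphism $\mu_1$ degenerates to the Calabi \emph{homomorphism}, which has zero defect; consequently $f_k|_{\D^2} = \mu_k + \Cal$ inherits the defect of $\mu_k$ alone, namely $\tfrac{2}{k}$. Your proposal skips this step entirely, and without it the $\tfrac{2}{k}$ bound for $f_k$ does not follow.
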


This is a key property that powers many of our arguments and one goal of our paper is to illustrate the usefulness of this fact.
The basic idea is that this defect property allows us to detect interesting normal subgroups and construct interesting homomorphisms; on the other hand, our two-term Weyl law from above allows us to recover the Calabi and Ruelle invariants, which are among the most studied invariants of area-preserving disc maps, 
from the $f_k$, for a wide class of diffeomorphisms.

We put this together as follows.  In our previous work, we studied twist maps with ``infinite Calabi" invariant, defined via the leading asymptotics of the $f_k$, to show that Hameo is proper.  Here, we study twist maps with ``infinite Ruelle invariant," defined via the asymptotics of the $kf_k$, to show nonsimplicity of Hameo.  More precisely, we define a subgroup of elements with $O(1)$ subleading asymptotics and we show that this contains all smooth Hamiltonian diffeomorphisms, but we show that it is proper by constructing a hameomorphism with ``infinite Ruelle invariant;'' 
see Proposition \ref{prop:N-disc}.  

We further comment on the contrast between  ``infinite Ruelle" and ``infinite Calabi" in Remark \ref{rem:fast-vs-toofast}. 

\subsection{Summary of our knowledge of the normal subgroup structure}
It seems to us useful to summarize in one place what is known about the normal subgroup structure for the groups that concern us here, and what remains to be understood.

We start with the case of smooth (i.e. $C^\infty$) diffeomorphisms, established by Banyaga, for the sake of comparison.  We let $G^\infty$ denote either the group $\Diff(\S^2,\omega)$ of smooth diffeomorphisms of $\S^2$ which preserve the area 2-form $\omega$, or the group $\Diff_{\mathrm{c}}(\D^2,\omega)$ of compactly supported smooth diffeomorphisms of $\D^2$ which preserve $\omega$.  As mentioned above, in the case of $\S^2$, we have
\[ [G^\infty,G^\infty] = \Diff(\S^2,\omega),\]
and in the case of $\D^2$ we have
\[ [G^\infty,G^\infty] = \ker(\Cal) \subsetneq \Diff_{\mathrm{c}}(\D^2,\omega).\]
Moreover, $[G^\infty,G^\infty]$ is simple; and, in the disc case, we have
\begin{equation}
  \label{eq:abelianization-smooth}
  G^\infty/[G^\infty, G^\infty]\simeq \R.
\end{equation}

The case of homeomorphisms seems quite different: a striking phenomenon, which seems genuinely new, is a plethora of normal subgroups arising from different geometric considerations.  

To elaborate, we described above the subgroup $\Hameo$, which one can think of as those homeomorphisms that can be said to have Hamiltonians.  There is another normal subgroup $\FHomeo$, containing $\Hameo$, whose precise definition we skip for brevity: one can think of it as the largest normal subgroup for which Hofer's geometry can be defined.  
Buhovsky has recently shown \cite{Bu22} that $\FHomeo$ and $\Hameo$ do not coincide.  As mentioned above, we showed in \cite{CGHS20,CGHMSS21}, resolving in particular the simplicity conjecture, that $\FHomeo$ is proper.   We can therefore summarize the situation regarding these groups, prior to this work, as follows.  Let $G$ denote the group of area and orientation preserving homeomorphisms of $\S^2$ or the group of compactly supported area-preserving homeomorphisms of $\D^2$.

For $\S^2$, we have
\[ [G,G]\subset\Hameo(\S^2,\omega)\subsetneq\FHomeo(\S^2,\omega)\subsetneq G \]
For $\D^2$ we have
\[ [G,G]\subset\ker(\Cal)\subsetneq\Hameo(\D^2,\omega)\subset \FHomeo(\D^2,\omega)\subsetneq G,\]
where here $\Cal$ denotes the extension of the Calabi homomorphism mentioned above that we established in \cite[Thm. 1.4]{CGHMSS21}; one expects the inclusion of $\Hameo$ into $\FHomeo$ to be proper by the arguments in \cite{Bu22}. 

Our work here shows that the left most inclusions are proper, by constructing an explicit normal subgroup, and that $[G,G]$ is simple.   The normal subgroups we construct to show properness, denoted by $N(\S^2)$ and $N(\D^2)$ respectively, do contain $[G,G]$, but we do not know if this inclusion is proper. As a result, for $\S^2$, we have
\[ [G,G]\subset  N(\S^2)  \subsetneq \Hameo(\S^2,\omega)\subsetneq\FHomeo(\S^2,\omega)\subsetneq G \]
For $\D^2$ we have
\[ [G,G]\subset   N(\D^2)  \subsetneq \ker(\Cal)\subsetneq\Hameo(\D^2,\omega)\subset \FHomeo(\D^2,\omega)\subsetneq G,\]

To set the context for describing more normal subgroups, it is natural to wonder if (\ref{eq:abelianization-smooth}) has any counterpart for homeomorphisms.  We know that $G/[G,G]$ contains a subgroup isomorphic to $\R$ and that it therefore has the same cardinality as $\R$, since a continuous function on the reals is determined by its values on the rationals.  However, this is all we currently know about $G/[G,G]$.   
On the other hand, in this paper we find some ``quasimorphism subgroups" that can be assumed to contain any of the above $H$ whose quotients are isomorphic to $\mathbb{R}$, see Section~\ref{s:extension}.

There are additional interesting normal subgroups related to the underlying geometry that are not our focus in the present work.
%
First of all, one can construct normal subgroups via ``fragmentation norms", see \cite{LeRoux10}; it is not currently known how these relate to the normal subgroups above.  One can also find normal subgroups between $\FHomeo$ and $G$ by pulling back from the quotient subgroups corresponding to growth rates of infinite twist maps, see \cite{Pol-Shel21}.  

\medskip

\subsection{Organization of the paper}

The outline of the paper is now as follows.  After reviewing the preliminaries, we start with the computation in the smooth case, proving Theorem~\ref{t:O(1)}; this is the content of Section \ref{s:subleading}.   We then move to the case of hameomorphisms in Section \ref{s:non-simple}: the outcome of the computation from the previous section gives an explicit formula for the subleading asymptotics in the smooth case, and this motivates our definition for a hameomorphism with unbounded subleading asymptotics, see Section \ref{s:non-simple}, which is the key step in proving Theorem~\ref{thm:notsimple}.  Section \ref{s:extension} uses related ideas to extend the Calabi invariant: the idea is that, just as the subleading asymptotics are a suitable replacement for Ruelle, the leading asymptotics allow for infinitely many extensions of Calabi.  Finally, in Section \ref{s:simple}, we prove the simplicity result Theorem~\ref{thm:comm}.

\subsection{Acknowledgements}
DCG thanks Lei Chen, Igor Frenkel, Dan Pomerleano, Christian Rosendal, and Rich Schwartz for helpful and interesting discussions and/or correspondence.   
SS thanks Yannick Bonthonneau for helpful conversations.
IS is grateful to Peter Varju for helpful discussions.  We thank the anonymous referee, and the editor, for their valuable comments.  

DCG also thanks the National Science Foundation for their support under Awards \#1711976 and \#2105471.  VH is partially supported by the Agence Nationale de la Recherche grants ANR-21-CE40-0002 and ANR-CE40-0014.
CM is supported by the Simons Collaboration on Homological Mirror Symmetry. SS is partially supported by the ERC Starting Grant number 851701.



\section{Preliminaries}
\label{sec:prelim}

We begin by reviewing the relevant background material and elaborating on some definitions mentioned in the introduction.

\subsection{The groups, the Calabi homomorphism and the Ruelle invariant}

\subsubsection{Basic notions}
Let $S$ be either the standard $2$-sphere $\S^2=\{(x,y,z)\in\R^3:x^2+y^2+z^2=1\}$ in $\R^3$ or the standard closed $2$-disc $\D^2$ in $\R^2$. We assume that $S$ is endowed with an area form $\omega$. In the case of the disc, unless otherwise stated, all our maps will be assumed compactly supported, i.e. functions on $\D^2$ are assumed to vanish in some neighborhood of the boundary of $\D^2$ and homeomorphisms of $\D^2$ are assumed to coincide with the identity in some neighborhood of the boundary of $\D^2$. 

As mentioned in the introduction, our main characters will be 
\[G=\Homeo_{\mathrm{c}}(S,\omega)\]
the group of (compactly supported in the interior) area-preserving homeomorphisms of $S$
and its smooth counter-part
\[G^{\infty}=\Diff_{\mathrm{c}}(S,\omega)\]
the group of area preserving diffeomorphisms of $S$. When $S=\S^2$ we will drop the subscript `c' from the notation.
The group $G$ is known to be the $C^0$-closure of $G^{\infty}$.
A smooth Hamiltonian $H=(H_t)_{t\in[0,1]}:[0,1]\times S\to \R$ generates an isotopy $(\phi_H^t)_{t\in[0,1]}$ obtained by integrating the time dependent vector field $X_{H_t}$ defined by $\omega(X_{H_t},\cdot)=dH_t$.  
It is known that $G^{\infty}$ coincides with the Hamiltonian group $\Ham_{\mathrm{c}}(S,\omega)$, i.e. that any $\psi\in G^{\infty}$ is of the form $\psi=\phi_H^1$ for some Hamiltonian $H$.

In the disc case, $G^\infty=\Ham_{\mathrm{c}}(\D^2,\omega)$ admits a non-trivial group homomorphism $\Cal:G^{\infty}\to\R$, called the Calabi homomorphism, which we now recall.

\begin{defn}\label{def:Calabi}
Let $\psi\in \Diff_{\mathrm{c}}(\D^2,\omega)$. Since $G^\infty=\Ham_{\mathrm{c}}(\D^2,\omega)$, the diffeomorphism $\psi$ is the time-one map of a Hamiltonian $H$, i.e. $\psi=\phi_H^1$. The quantity
\[\Cal(\psi)=\int_0^1\int_{\D^2}H\,\omega\,dt\]
turns out to be independent of the choice of Hamiltonian $H$ and is called the Calabi invariant of $\psi$. This defines a map $\Cal: G^\infty\to\R$ which is a group homomorphism \cite{calabi} (see also \cite{mcduff-salamon}).
\end{defn}

As mentioned in the introduction, we can think of the Calabi homomorphism as measuring the ``average rotation'' of the map, see \cite{fathi-calabi,Gambaudo-Ghys}.

\subsubsection{Some normal subgroups of $G$}
We are interested in this work in a particular normal subgroup of $G$.  The key definition is as follows. As above, we denote by $S$ a surface which is either $\S^2$ or $\D^2$. 

\begin{defn}[Oh-M\"uller \cite{muller-oh}]\label{def:hameo} A homeomorphism $\psi\in G$ is called a hameomorphism (or sometimes a strong Hamiltonian homeomorphism) if there exist a compact subset $K\subset S$, a sequence of Hamiltonians $H_i:[0,1]\times S\to\R$, $i\in\mathbb N$, supported in $K$ and an isotopy $(\psi^t)_{t\in[0,1]}$ with $\psi^0=\id$ and $\psi^1=\psi$, such that
  \begin{enumerate}[(i)]
  \item $\phi_{H_i}^t$ converges to $\psi^t$ in the $C^0$ topology and uniformly in $t\in[0,1]$,
  \item $H_i$ is a Cauchy sequence with respect to the Hofer norm $\|\cdot\|$. 
  \end{enumerate}
The set of all hameomorphisms is denoted $\Hameo(S,\omega)$. It is proved that $\Hameo(S,\omega) \neq G$ \cite{CGHS20}.
\end{defn}

\begin{rem}\label{rem:different-hameos} Several variants of the above definition may be found in the literature. In particular, one sometimes replaces the convergence with respect to the Hofer norm $\|\cdot\|$ with uniform convergence. However, it was proved by M\"uller \cite{Muller} that this change in the definition gives rise to the same group of hameomorphisms.

  In \cite[Def. 2.1]{CGHMSS21}, we used the following weaker variant. We called $\psi$ a hameomorphism if there exist a compact set $K$ and a sequence of Hamiltonians $H_i$, supported in $K$, such that the time-1 maps $\phi_{H_i}^1$ converge to $\psi$ and $H_i$ is Cauchy with respect to the Hofer norm. This weaker notion gives rise to another normal subgroup of $G$, which we will denote $\Hameo'$ in this remark. We clearly have the inclusion $\Hameo\subset\Hameo'$, but we do not know whether equality holds.

  Our reason to change from one notion to another is to have stronger statements. Indeed, in \cite[Thm. 1.4]{CGHMSS21} (see also the discussion in Theorem~\ref{th:extension-calabi} below), we extended the Calabi homomorphism to $\Hameo'$ which is a priori a stronger result than just extending to $\Hameo$. Here, we find a normal subgroup of $G$ which is strictly smaller than $\Hameo$ (resp. $\ker(\Cal)$ in $\Hameo$); this is a priori a stronger statement than finding a subgroup in $\Hameo'$ (resp. $\ker(\Cal)$ in $\Hameo'$).  
\end{rem}

We can use the Calabi homomorphism from above to get some additional subgroups, as the following shows.

\begin{thm}[\cite{CGHMSS21}, Theorem 1.4]\label{th:extension-calabi} The Calabi homomorphism on $G^\infty$ extends canonically to a group homomorphism  $\Hameo(\D^2,\omega)\to\R$. 
  Moreover, for any $\psi\in \Hameo(\D^2,\omega)$ and any sequence $H_i$ as in Definition \ref{def:hameo} the extension of the Calabi homomorphism satisfies
  \[\Cal(\psi)=\lim_{i\to\infty} \Cal(\phi_{H_i}^1).\]
\end{thm}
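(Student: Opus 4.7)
My strategy is to define the extension via the link spectral invariants $f_k$, then use the defect bound of Lemma~\ref{lem:defect} together with the Weyl law (\ref{eqn:weyl}) to verify the required properties. Concretely, for $\psi \in \Hameo(\D^2,\omega)$ I would set $\Cal(\psi) := \lim_{k \to \infty} f_k(\psi)$ and prove simultaneously that this limit exists, that it equals $\lim_i \Cal(\phi^1_{H_i})$ for any approximating sequence $(H_i)$ as in Definition~\ref{def:hameo}, and that the result is a group homomorphism extending the classical $\Cal$ on $G^\infty$.

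The key auxiliary inputs are that each $f_k$ is $C^0$-continuous on $\Homeo_c(\D^2,\omega)$, and that $f_k$ is Hofer-Lipschitz on smooth maps with a constant independent of $k$, namely $|f_k(\phi^1_H) - f_k(\phi^1_G)| \leq C\,\|H - G\|_{\mathrm{Hofer}}$. Such a bound is standard for spectral invariants defined via min-max; I would verify it for the specific construction of $f_k$ in \cite{CGHMSS21} by retracing the definition and checking that the relevant action functionals depend $1$-Lipschitz continuously on the generating Hamiltonian.

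Given these inputs, the core of the proof is a limit-exchange argument. Fix $\psi$ with approximating sequence $(H_i)$. The uniform Hofer-Lipschitz bound gives $|f_k(\phi^1_{H_i}) - f_k(\phi^1_{H_j})| \leq C\|H_i - H_j\|_{\mathrm{Hofer}}$, so $\{f_k(\phi^1_{H_i})\}_i$ is Cauchy \emph{uniformly in $k$}. Letting $j \to \infty$ and invoking $C^0$-continuity, I obtain $|f_k(\phi^1_{H_i}) - f_k(\psi)| \leq \varepsilon$ for $i$ large, uniformly in $k$. In parallel, $c_i := \Cal(\phi^1_{H_i})$ is Cauchy (since $|\Cal(\phi^1_H) - \Cal(\phi^1_G)| \leq \mathrm{Area}(\D^2)\,\|H - G\|_{\mathrm{Hofer}}$ on smooth maps) with limit $c$, and the Weyl law gives $\lim_k f_k(\phi^1_{H_i}) = c_i$ for each fixed $i$. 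The uniformity in $k$ legitimises exchanging the order of limits in $f_k(\phi^1_{H_i})$, yielding $\lim_k f_k(\psi) = \lim_i c_i = c$. This establishes existence of the limit defining $\Cal(\psi)$, the intrinsic formula $\Cal(\psi) = \lim_i \Cal(\phi^1_{H_i})$, and independence of the approximating sequence all at once. The homomorphism property then follows from the defect bound: $|\Cal(\psi_1 \psi_2) - \Cal(\psi_1) - \Cal(\psi_2)| = \lim_k |f_k(\psi_1 \psi_2) - f_k(\psi_1) - f_k(\psi_2)| \leq \lim_k \tfrac{2}{k} = 0$; and agreement with classical $\Cal$ on $G^\infty$ is immediate from the Weyl law.

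The main obstacle is the uniform-in-$k$ Hofer-Lipschitz bound on $f_k$. Without such uniformity, the limits in the above argument cannot be swapped, and a diagonal argument would only establish convergence along a particular subsequence, leaving open whether different approximating sequences could yield different values of $\Cal(\psi)$. Verifying this bound requires revisiting the min-max construction of the link spectral invariants from \cite{CGHMSS21} and confirming that the Lipschitz constant with respect to Hofer norm does not degenerate as $k \to \infty$; this is the one place where the finer structure of the $f_k$ enters the argument beyond the formal properties of defect, $C^0$-continuity, and the Weyl law.
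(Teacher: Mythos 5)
Your proof is correct and takes the natural route: define the extension as $\lim_k f_k(\psi)$ and justify the exchange of limits via a uniform (in $k$) Hofer--Lipschitz estimate. Note that the present paper does not itself prove this theorem --- it is quoted from \cite{CGHMSS21} --- but Remark~\ref{rem:ext} contains precisely the same three-term limit-exchange argument for the closely related claim that the extensions of Theorem~\ref{thm:cal} agree with the $\Cal$ of \cite{CGHMSS21} on $\Hameo$, so the approach you outline is the intended one.

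The one thing you flag as ``the main obstacle'' is not in fact one: the uniform-in-$k$ Hofer--Lipschitz bound is already supplied by Theorem~\ref{thm:mu-k}(1). For compactly supported $H,K$ on the disc, $\min_x(H_t-K_t)\le 0\le\max_x(H_t-K_t)$ for every $t$, so the two-sided inequality
\[
\int_0^1\min_{x}(H_t-K_t)\,dt\ \le\ \mu_k(H)-\mu_k(K)\ \le\ \int_0^1\max_{x}(H_t-K_t)\,dt
\]
gives $|\mu_k(H)-\mu_k(K)|\le\|H-K\|$ with constant $1$ for all $k$, and hence $|f_k(H)-f_k(K)|\le 2\|H-K\|$. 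There is therefore no need to ``retrace the min-max construction''; the limit exchange is fully justified by a stated property of the spectral invariants, and the rest of your estimate closes up: the first term $|f_k(\psi)-f_k(\phi^1_{H_i})|$ is $\le 2\lim_{j\to\infty}\|H_i-H_j\|$ (pass $j\to\infty$ at fixed $k$ using $C^0$-continuity), the middle term is handled at fixed $i$ by the Weyl law \eqref{eqn:weyl}, and the third by Hofer-Cauchyness of $(H_i)$. Finally, it is worth noting that your argument never uses the $C^0$-convergence of the full isotopy $(\phi^t_{H_i})_t$ from Definition~\ref{def:hameo}, only the convergence of the time-$1$ maps together with Hofer-Cauchyness; so it in fact extends $\Cal$ to the a priori larger group $\Hameo'$ of Remark~\ref{rem:different-hameos}, which matches the original scope of \cite[Theorem~1.4]{CGHMSS21}.
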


This gives another normal subgroup of $G$ in the case of the disc, namely the kernel of $\Cal:\Hameo(\D^2,\omega)\to\R$.

\subsubsection{The Ruelle invariant}\label{ss:Ruelle}
We now recall the construction of the Ruelle quasi-morphism, following \cite{Gambaudo-Ghys}. Recall that  $G^{\infty} := \Diff_{\mathrm{c}}(\bD^2,\omega)$ denotes the group of compactly-supported area-preserving diffeomorphisms of the 2-disc. We fix a trivialization
\begin{equation} \label{eqn:trivialise_TD}
T\bD^2 \cong \bD^2 \times \mathbb{R}^2
\end{equation}
(which is unique up to homotopy).  The group $G^{\infty}$ is contractible, so if $g\in G^{\infty}$ we may pick an isotopy $\{g_t\}$ from $\id$ to $g$, again unique up to homotopy.  For a point $z \in \bD^2$, let
\[
v_t(z) \in \bR^2\backslash \{0\}
\]
denote the first column of $dg_t(z) \in \operatorname{SL}(2,\bR)$ expressed
in the trivialisation \eqref{eqn:trivialise_TD}, and 
\[
\mathrm{Ang}_g(z) \in \mathbb{R}
\]
the variation in the angle of $v_t(z)$, measured with respect to a fixed direction (say the $x$-axis) and integrated over $0\leq t \leq 1$.  The uniqueness of the choice of $\{g_t\}$ up to homotopy shows this does not depend on the choice of isotopy from $g$ to $\id$. The function $z\mapsto \mathrm{Ang}_g(z)$ is smooth and so integrable. 
Setting 
\[
r(g) := \int_{\bD^2} \mathrm{Ang}_g(z) \, \omega
\]
we obtain the \emph{Ruelle invariant}
\[
\Ru(g) := \lim_{p \to \infty} r(g^p)/p
.\]
This is a non-trivial homogeneous quasi-morphism on $G^{\infty}$ (and on the kernel of the Calabi homomorphism).

Gambaudo and Ghys \cite[Proposition 2.9]{Gambaudo-Ghys} give a formula for the Ruelle invariant in the special case of an autonomous Hamiltonian flow of a function $H \in C^{\infty}_{\mathrm{c}}(\bD^2)$ with finitely many critical values. Suppose  $\xi \in \bR$ is a regular value of $H$, so $H^{-1}(\xi)$ is a finite disjoint union of circles. Each such circle $C$ bounds a disc in $\bD^2$, and we associate the sign $+1$, respectively $-1$, to $C \subset H^{-1}(\xi)$ depending on whether $H$ increases,  respectively decreases, as one crosses from the exterior to the interior region. 

Then 
\begin{align}
\Ru(H) := \Ru(\phi_H^1) = \int_{\bR} n_H(\xi) d\xi \label{eq:RuelleFormula}
\end{align}
where the integer $n_H(\xi) \in \bZ$ is the signed sum of values $\pm 1$ over the connected components $C$ of $H^{-1}(\xi)$. 

Specialising further to the case of a smooth function $H\in C^{\infty}_{\mathrm{c}}(\bD^2)$ which is Morse with critical points $p_i$, this simplifies to (\cite[Section 2.4]{Ghys_ICM}):
\begin{equation} \label{eqn:Ruelle_Morse_case}
\Ru(H) = \sum_i (-1)^{\mathrm{ind}(p_i)}\, H(p_i)
\end{equation}
where $\mathrm{ind}(p_i)$ is the Morse index of $p_i$.

\subsection{Monotone links, spectral invariants and quasimorphisms}

The material for this section was developed in \cite{CGHMSS21}. We refer the reader to this paper for further details. 

\subsubsection{Monotone links and their spectral invariants}
We call a Lagrangian link (or Lagrangian configuration) any subset of the form $\underline{L}=L_1\cup \dots\cup L_k$ where the $L_i$'s are pairwise disjoint smooth simple closed curves in $\S^2$, see Figure \ref{fig:ex-link}. A Lagrangian link is called {\it monotone} if the connected components of its complement all have the same area $\frac{\mathrm{area}(\S^2)}{k+1}$.  

\begin{figure}[h!]
\centering
\includegraphics[width=0.7\columnwidth]{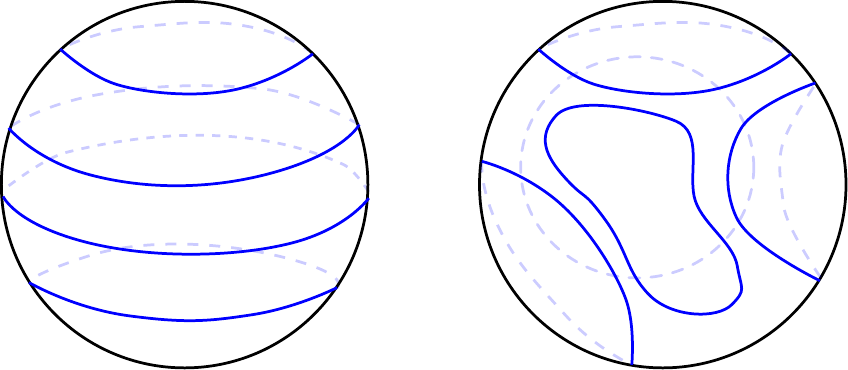}
\caption{Two examples of Lagrangian links on $\S^2$ with respectively $k=4$ and $k=5$ components.}
\label{fig:ex-link}
\end{figure}

\begin{rem}
  In \cite[Def. 1.12]{CGHMSS21}, we introduced a more general notion of $\eta$-monotonicity, where $\eta$ is a non-negative real parameter. We will not need this more general notion in the present paper. What we call monotonicity here corresponds to $0$-monotonicity.
\end{rem}

Let $\ul{L}$ be a monotone link  with $k$ components. We can take the product of the components to form the associated connected submanifold $\Sym(\ul{L})$ inside the $k$-fold symmetric product  $\Sym^k(\S^2):=(\S^2)^k/\Sym_k$, where $\Sym_k$ is the permutation group permuting the factors of $(\S^2)^k$.
The symplectic form $\omega$ on $\S^2$ induces a singular symplectic form on $\Sym^k(\S^2)$ whose singular locus is away from $\Sym(\ul{L})$ and makes $\Sym(\ul{L})$ a Lagrangian submanifold.
After smoothing the symplectic form near the singular locus, the Lagrangian Floer cohomology of $\Sym(\ul{L})$ with itself is well-defined and non-zero \cite[Lem.\ 6.10]{CGHMSS21}. It enables us to define the link spectral invariants as follow.

Given a Hamiltonian function $H: [0,1] \times \S^2 \to \mathbb{R}$, we define $\Sym(H): [0,1] \times \Sym^k(\S^2) \to \mathbb{R}$ to be $\Sym(H)_t([x_1,\dots,x_k]):=\sum_{i=1}^k H_t(x_i)$.
The Lagrangian link spectral invariant $c_{\ul{L}}(H)$ is defined to be $\frac{1}{k}c_{\Sym(\ul{L})}(\Sym(H))$, where $c_{\Sym(\ul{L})}(\Sym(H))$ is the Lagrangian spectral invariant of $\Sym(H)$ with respect to the Lagrangian submanifold $\Sym(\ul{L})$ \cite[Equation (54)]{CGHMSS21}.
We have shown in \cite[Thm 1.13, Lem. 6.16, 6.17]{CGHMSS21} that it is well-defined and independent from the choice of smoothing of the symplectic form as long as the smoothing is sufficiently local.
For a Hamiltonian diffeomorphism $\psi \in \Diff(\S^2,\omega)\ (=\Ham(\S^2,\omega) )$ and a mean-normalized generating Hamiltonian $H$ (i.e. $\int_{\S^2} H_t \omega dt=0$ for all $t \in [0,1]$, and $\psi=\phi^1_H$), we have shown in \cite[Thm 1.13, Lem. 6.16, 6.17]{CGHMSS21} that $c_{\ul{L}}(\psi):=c_{\ul{L}}(H)$ is well-defined and independent of the choice of $H$.
\begin{rem}
In the specific case of a link $\ul L$ by parallel circles, similar invariants were previously constructed by Polterovich and Shelukhin \cite{Pol-Shel21} using orbifold Floer cohomology \cite{FOOObulk}, \cite{Cho-Poddar} and the computational techniques in \cite{MS19}.
\end{rem}

\subsubsection{Quasimorphisms for diffeomorphism groups}\label{sec:prelim-quasis}

Recall that a quasimorphism on a group $\Gamma$ is a map $f:\Gamma\to\R$ for which there exists a constant $D>0$ such that for any $a,b\in\Gamma$,
\[|f(ab)-f(a)-f(b)|\leq D\]
The constant $D$ is called a defect of $f$. A quasimorphism $f$ is said to be homogeneous if it satisfies $f(a^k)=kf(a)$ for any $k\in\bZ$ and $a\in \Gamma$.

The spectral invariant $c_{\underline{L}}$ may be used to construct quasimorphisms on $G^{\infty}=\Diff(\S^2,\omega)$. This was proved in \cite[Thm 7.6]{CGHMSS21}, inspired by an older (and famous) construction of Entov and Polterovich \cite{Entov-Polterovich}.

Let $\underline L$ be a monotone Lagrangian link with $k$ components and $\phi\in\Diff(\S^2,\omega)$ and let us introduce the homogenized spectral invariant
\begin{equation*}
  \mu_k(H)=\lim_{n\to\infty}\frac1n c_{\underline L}(H^{\sharp n}),
\end{equation*}
for any Hamiltonian $H$ on $\S^2$; these are the $\mu_k$ mentioned in the introduction.  The above limit does not depend on the choice of the link $ \underline L$; see Theorem \ref{thm:mu-k} below.    Here the notation $H^{\sharp n}$ means the $n$-times composition of $H$, where the composition of Hamiltonians is defined by $(H\sharp K)_t(x)=H_t(x)+K_t\circ(\phi_H^t)^{-1}(x)$. It is well known that $H\sharp K$ generates $\phi_H^t\circ\phi_K^t$, thus $H^{\sharp n}$ generates the isotopy $(\phi_H^t)^n$.

Note that $\mu_k$ has a shift property (see \cite[Thm 1.13]{CGHMSS21}), namely for any Hamiltonian $H$ and any constant $c\in\R$, we have
\begin{equation}
  \label{eq:shift}
  \mu_k(H+c)=\mu_k(H)+c.
\end{equation}
As above, we obtain invariants associated to elements of $\Diff(\S^2,\omega)$ (still denoted $\mu_k$) by:
\begin{equation}\label{eq:prelim-H-phi}
  \mu_k(\varphi)=\mu_k(H),
\end{equation}
for any mean-normalized Hamiltonian $H$ such that $\phi_H^1=\varphi$. This does not depend on the choice of $H$, see \cite[Thm. 1.13, Lem 6.17]{CGHMSS21}. 

\begin{thm}[\cite{CGHMSS21} Thm.\ 7.6, Thm.\ 7.7]\label{thm:mu-k} For fixed $k$, the map $\mu_k:\Diff(\S^2,\omega)\to\R$ does not depend on the choice of Lagrangian link $\underline L$.
Moreover the following properties hold
  \begin{enumerate}
  \item (Hofer continuity and monotonocity) For all Hamiltonians $H, K$, 
    \[\int_0^1\min_{x\in\S^2}(H_t(x)-K_t(x))dt\leq \mu_k(H)-\mu_k(K)\leq \int_0^1\max_{x\in\S^2}(H_t(x)-K_t(x))dt.\]
  \item (Lagrangian control) Let $H$ be a 
    Hamiltonian and $\underline{L}=L_1\cup\dots\cup L_k$ be a Lagrangian link, such that for all $i=1,\dots, k$ the restriction of $H$ to $L_i$ is a function of $t$ denoted $c_i$.  Then,
    \[\mu_k(H)=\frac1k\sum_{i=1}^k\int_0^1c_i(t)dt.\]
 \item (Quasimorphism) The map $\mu_k$ is a homogeneous quasimorphism of defect $\frac2k$.  
    \end{enumerate}
\end{thm}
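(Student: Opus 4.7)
The plan is to derive each property from the corresponding Floer-theoretic property of the unhomogenized link invariant $c_{\underline L}(H) = \frac{1}{k}c_{\Sym(\underline L)}(\Sym(H))$ and then pass to the limit in $\mu_k(H) = \lim_n \frac{1}{n} c_{\underline L}(H^{\sharp n})$. First I would verify that the limit exists: the Lagrangian spectral invariant $c_{\Sym(\underline L)}$ on the smoothed symmetric product satisfies a standard triangle inequality with some additive constant coming from the pair-of-pants product on $\mathrm{HF}(\Sym(\underline L),\Sym(\underline L))$, which becomes subadditivity (up to a bounded error) for $c_{\underline L}(H^{\sharp(m+n)})$ in terms of $c_{\underline L}(H^{\sharp m})$ and $c_{\underline L}(H^{\sharp n})$. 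Fekete's lemma then yields convergence. Together with the shift property \eqref{eq:shift}, this descends to a well-defined function on $\Diff(\S^2,\omega)$ through mean-normalized Hamiltonians as in \eqref{eq:prelim-H-phi}.

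For item (1), the standard Lagrangian spectral invariant on $\Sym^k(\S^2)$ obeys
\[
\int_0^1 \min_{\Sym^k\S^2}(F_t-G_t)\,dt \leq c_{\Sym(\underline L)}(F)-c_{\Sym(\underline L)}(G) \leq \int_0^1 \max (F_t-G_t)\,dt,
\]
applied to $F=\Sym(H)$, $G=\Sym(K)$. Since the minimum (respectively maximum) of $\Sym(H-K)_t$ equals $k$ times the minimum (respectively maximum) of $H_t-K_t$, dividing by $k$ gives the bound for $c_{\underline L}$, and the linearity of these bounds under iterated composition passes them through the limit. For item (2), if $H|_{L_i}(t,\cdot) = c_i(t)$, then $\Sym(H)$ restricts to the constant $\sum_i c_i(t)$ on $\Sym(\underline L)$, so the usual Lagrangian control property yields $c_{\Sym(\underline L)}(\Sym(H)) = \int_0^1\sum_i c_i(t)\,dt$; dividing by $k$ gives the stated formula for $c_{\underline L}$. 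Since $\phi^t_H$ preserves each $L_i$ when $H$ is link-compatible, the iterates $H^{\sharp n}$ remain link-compatible (with boundary values $nc_i(t)$), so the formula is preserved under homogenization.

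For item (3), applying the triangle inequality for $c_{\Sym(\underline L)}$ in both orders, and using the fact that $\bar F\sharp F$ generates the identity, yields an estimate of the form
\[
\bigl|c_{\Sym(\underline L)}(\Sym(H\sharp K)) - c_{\Sym(\underline L)}(\Sym(H)) - c_{\Sym(\underline L)}(\Sym(K))\bigr| \leq 2,
\]
once one uses the specific monotonicity of $\Sym(\underline L)$ inside $\Sym^k(\S^2)$. Dividing by $k$ produces defect $2/k$ for $c_{\underline L}$, and since the defect of a homogenization of a quasimorphism does not exceed the original defect, one concludes that $\mu_k$ is a homogeneous quasimorphism of defect $2/k$. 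Homogeneity $\mu_k(H^{\sharp n}) = n\mu_k(H)$ is immediate from the defining limit.

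The independence of $\mu_k$ from the choice of monotone link, and the precise constant $2$ in the defect, are the main obstacles. For the former, I would interpolate between two monotone links of size $k$ by a path $\{\underline L_s\}_{s\in[0,1]}$ of monotone links and use the continuity of Lagrangian Floer spectral invariants under Hamiltonian isotopy of the Lagrangian to show that $c_{\underline L_0}(H)-c_{\underline L_1}(H)$ is bounded uniformly in $H$, hence vanishes after homogenization. The sharp constant $2$ in the defect is what powers the applications in this paper, especially the subleading $O(1/k)$ asymptotics in Theorem~\ref{t:O(1)}; extracting it requires analysing the filtration diameter of the Floer PSS morphism in tandem with the monotonicity condition on $\Sym(\underline L)$ in the smoothed symplectic form.
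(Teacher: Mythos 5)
The paper does not prove Theorem~\ref{thm:mu-k}: it is quoted as Theorems~7.6 and~7.7 of \cite{CGHMSS21} and used as a black box, so there is no proof in the present paper against which to compare your attempt. I will therefore evaluate the sketch on its own terms.

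Most of the outline (subadditivity and Fekete for homogenization; Hofer continuity descending from the triangle inequality on $\Sym^k(\S^2)$; Lagrangian control; the defect estimate coming from Poincar\'e duality and the pair-of-pants product together with the monotonicity of $\Sym(\underline L)$) is a plausible reconstruction of the strategy in \cite{CGHMSS21}. There is, however, a genuine gap in the independence-of-link step. You propose to interpolate between two $k$-component monotone links by a path of monotone links and invoke Hamiltonian-isotopy invariance of Lagrangian spectral invariants. But for $k\geq 3$ two monotone links on $\S^2$ with $k$ components need not be isotopic as links: the complement consists of $k+1$ regions of equal area, whose adjacency pattern is encoded by a tree on $k+1$ vertices, and different tree types (for instance a path of three nested circles versus a star of three circles around a central region, for $k=3$) are realized by links that cannot be joined by any isotopy, let alone by a path through monotone links. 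So the isotopy argument at best proves independence within one combinatorial class; crossing between classes is precisely the non-trivial content of the link-independence theorem, and requires separate input (e.g.\ a comparison of the relevant Floer cohomologies and spectral invariants across configurations, or an algebraic argument using the quasimorphism property). Relatedly, your derivation of the sharp constant $2$ in the defect, which you correctly single out as the crux, remains entirely schematic: the phrase ``once one uses the specific monotonicity of $\Sym(\underline L)$'' is a placeholder rather than an argument, and the quantitative input (the valuation structure of $\mathrm{HF}(\Sym(\underline L),\Sym(\underline L))$ in the smoothed symmetric product, and the spectral-norm bound it yields) is exactly what would have to be supplied for the sketch to become a proof.
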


The first item implies that the quasimorphisms $\mu_k:\Diff(\S^2,\omega)\to\R$ are Lipschitz continuous with respect to Hofer distance $d_H$ on $\Diff(\S^2,\omega)$ defined by
  \begin{equation}
    \label{eq:def:hofer-distance}
    d_H(\varphi, \psi):=\inf_{\varphi=\phi_H^1,\psi=\phi_K^1}\|H-K\|,
\end{equation}
where the norm is given by $\|H\|:=\int_0^1(\max_{\S^2}H_t-\min_{\S^2}H_t)dt$ (See e.g. \cite{Polterovich2001} for an introduction to Hofer's distance).
A consequence of item 2, proved in \cite[Thm 7.7(ii)]{CGHMSS21}, is that the quasimorphisms $\mu_k$ are linearly independent. In the case $k=1$, we recover the Entov-Polterovich quasimorphism  \cite{Entov-Polterovich}. 
As a consequence of the first and second items, for any Hamiltonian $H$, we have
\begin{equation}
\label{eq:strongerLagcontrol}
\frac1k\sum_{i=1}^k\int_0^1\min_{x \in L_i}H_t(x)dt \le \mu_k(H) \le \frac1k\sum_{i=1}^k\int_0^1\max_{x \in L_i}H_t(x)dt,
\end{equation}
which will be useful to us later.

\subsubsection{Quasimorphisms on the sphere}\label{s:qonS}
We now introduce quasimorphisms on the sphere.  Denote
\[f_k:=\mu_k-\mu_1.\]
By (\ref{eq:prelim-H-phi}) and the shift property (\ref{eq:shift}), we have 
\begin{equation*}
  f_k(H)=f_k(\phi_H^1)
\end{equation*}
for all Hamiltonians $H$ (not only for mean-normalized ones).
The $f_k$ give quasimorphisms on $\Diff(\S^2,\omega)$ which have similar properties to the $\mu_k$. Our motivation for introducing them is their $C^0$-continuity, which is not satisfied by the $\mu_k$. We collect in the next theorem their useful properties.

\begin{thm}[\cite{CGHMSS21}, Thm. 7.7 (iii), Thm. 7.6 (support control)]\label{th:C0continuity-fk}
  \begin{enumerate}
  \item ($C^0$-continuity) For all $k\geq 1$, the quasimorphism $f_k$ is continuous with respect to $C^0$ topology and extends continuously to $\Homeo(\S^2,\omega)$.
  \item (Support control) For all $k\geq 1$ and $\phi\in \Homeo(\S^2,\omega)$ whose support is included in a disc of area $\leq \frac1{k+1}$, we have
    $f_k(\phi)=0$.  
  \end{enumerate}
\end{thm}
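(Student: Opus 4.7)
The statement splits into two independent parts; I attack item 2 first and then item 1.

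For support control (item 2) in the smooth case, normalize $\omega$ so that the total area of $\S^2$ equals $1$. Given $\phi\in\Diff(\S^2,\omega)$ with support in a closed disc $D$ of area at most $\tfrac{1}{k+1}$, pick a Hamiltonian $H:[0,1]\times\S^2\to\R$ with $\supp(H_t)\subset D$ for all $t$ and $\phi_H^1=\phi$. Because $D$ has area at most $\tfrac{1}{k+1}$, a Moser-type construction produces a monotone Lagrangian link $\underline{L}_k$ with $k$ components whose $k+1$ equal-area complementary regions include one that contains $D$; similarly one chooses a monotone $1$-link $\underline{L}_1$ disjoint from $D$. Since $H$ vanishes identically on both $\underline{L}_k$ and $\underline{L}_1$, the Lagrangian control of Theorem~\ref{thm:mu-k}(2) gives $\mu_k(H)=0=\mu_1(H)$. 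Because $f_k=\mu_k-\mu_1$ is shift invariant and hence can be evaluated on any (not necessarily mean-normalized) Hamiltonian generating $\phi$, we conclude $f_k(\phi)=f_k(H)=\mu_k(H)-\mu_1(H)=0$. The corresponding statement for $\phi\in\Homeo(\S^2,\omega)$ then follows from item 1 by approximating $\phi$ in $C^0$ by smooth diffeomorphisms supported in a slightly shrunken sub-disc of $D$.

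For the $C^0$-continuity (item 1), the plan combines item 2 with the quasimorphism property. By the quasimorphism property with defect $\tfrac{2}{k}$, it suffices to prove continuity at $\id$. Fix a finite open cover $\{U_1,\dots,U_N\}$ of $\S^2$ by topological discs each of area strictly less than $\tfrac{1}{k+1}$. A standard fragmentation lemma (for instance, Banyaga's symplectic fragmentation) shows that every $\phi$ in some $C^0$-neighborhood $V$ of $\id$ factorizes as $\phi=\alpha_1\cdots\alpha_N$ with each $\alpha_i$ a Hamiltonian diffeomorphism supported in $U_i$. Item 2 forces $f_k(\alpha_i)=0$, so the quasimorphism property yields the uniform \emph{a priori} estimate $|f_k(\phi)|\le(N-1)\tfrac{2}{k}$ on $V$. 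To upgrade this bounded estimate to honest continuity at $\id$, one can invoke the homogeneity identity $f_k(\phi^m)=mf_k(\phi)$: for $\phi\in V$ whose iterates $\phi^m$ remain fragmentation-admissible for all $m\le M$, one gets $|f_k(\phi)|\le(N-1)\tfrac{2}{kM}\to 0$ as $M\to\infty$. Once continuity on $\Diff(\S^2,\omega)$ is established, the $C^0$-density of smooth diffeomorphisms in $\Homeo(\S^2,\omega)$ together with the uniform estimate produce a unique continuous extension of $f_k$ to $\Homeo(\S^2,\omega)$.

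The main obstacle is the upgrade step in item 1: for a $C^0$-small $\phi$, the iterates $\phi^m$ need not remain $C^0$-small, so the naive homogeneity trick must be buttressed by a more refined control. A natural fix is to trade support for Hofer norm, decomposing $\phi=\alpha\beta$ where $\alpha$ has support in a disc of area below $\tfrac{1}{k+1}$ (making $f_k(\alpha)=0$ via item 2) and $\beta$ has Hofer norm controlled by the $C^0$-displacement of $\phi$, and then invoking the Hofer continuity of $f_k$ from Theorem~\ref{thm:mu-k}(1). Producing such a decomposition with a uniform quantitative tradeoff is the technical crux and is precisely the content handled in \cite{CGHMSS21}.
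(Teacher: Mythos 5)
This theorem is quoted from \cite{CGHMSS21} without an internal proof, so I assess your argument on its own merits. Your treatment of item 2 in the smooth case is correct: placing a monotone $k$-component link $\underline{L}_k$ and a monotone $1$-link both in the complement of the supporting disc, Lagrangian control gives $\mu_k(H)=0=\mu_1(H)$ for any Hamiltonian $H$ supported there, and shift-invariance of $f_k$ yields $f_k(\phi)=0$; the homeomorphism case then follows from item 1 by $C^0$-approximation as you say. For item 1, however, the fragmentation input you use --- a $C^0$-neighborhood $V$ of $\id$ and a uniform integer $N$ such that every $\phi\in V$ factors into $N$ maps, each supported in a prescribed small disc --- is not Banyaga's fragmentation theorem, which gives no bound on the number of factors. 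A quantitative fragmentation of this kind for $C^0$-small area-preserving maps is the real technical crux of the $C^0$-continuity and deserves a reference or an argument rather than a casual attribution; it is not ``a standard fragmentation lemma.''

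The larger problem is that you abandon the argument at exactly the point where it goes through. The worry that iterates $\phi^m$ need not remain $C^0$-small is unfounded: $d_{C^0}$ is subadditive under composition, so $d_{C^0}(\phi^m,\id)\le m\,d_{C^0}(\phi,\id)$, and if $d_{C^0}(\phi,\id)<\epsilon/m$ then $\phi^m\in V$ and homogeneity gives $|f_k(\phi)|=\tfrac{1}{m}|f_k(\phi^m)|\le\tfrac{1}{m}(N-1)D_k$, where $D_k$ denotes the defect of $f_k$ on $\Homeo(\S^2,\omega)$ (which, by Remark~\ref{rem:difference-of-qm}, is $\tfrac{2}{k}+2$ rather than $\tfrac{2}{k}$). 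This already establishes continuity at $\id$; the Hofer-norm decomposition you gesture at is unnecessary. Note also that the reduction ``by the quasimorphism property it suffices to prove continuity at $\id$'' is not correct as stated: with a positive defect, continuity at $\id$ alone only yields $\limsup|f_k(\phi_n)-f_k(\phi_{n'})|\le D_k$ for a $C^0$-Cauchy sequence $\phi_n$. But the same power trick rescues it: for any fixed $M$, $C^0$-continuity of the group operations on $\Homeo(\S^2,\omega)$ gives $\phi_n^M(\phi_{n'}^M)^{-1}\to\id$, so eventually this product lies in $V$; then $|f_k(\phi_n^M)-f_k(\phi_{n'}^M)|\le N D_k$, and dividing by $M$ and letting $M\to\infty$ shows $f_k(\phi_n)$ is Cauchy, yielding the continuous extension. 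Modulo the quantitative fragmentation input, your approach does close; you should carry out this last step rather than retreating to the citation.
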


\begin{rem}\label{rem:difference-of-qm} In fact, for any positive integers $k, k'$, the difference $\mu_{k'}-\mu_k$ extends continuously to a quasimorphism on $\Homeo(\S^2,\omega)$. Its defect is bounded above by the sum of the defects of $\mu_k$ and $\mu_{k'}$, i.e. by $\frac2{k}+\frac2{k'}$.  In particular, $f_k$ has defect $\frac2{k}+2$. 
\end{rem}

\subsubsection{Inducing quasimorphisms on the disc}\label{sec:qm-on-disc}

Let $\iota:\D^2\to\S^2$ be a smooth symplectic embedding which identifies the disc $\D^2$ with the northern (or southern) hemisphere. Then we have an inclusion  $\Homeo_{\mathrm{c}}(\D^2,\omega)\subset \Homeo(\S^2,\omega)$ and the maps $f_k$ induce by restriction quasimorphisms on $\Homeo_{\mathrm{c}}(\D^2,\omega)$.

Let $H$ be a Hamiltonian which is compactly supported in the disc. Then the Lagrangian control property yields $\mu_1(H)=0$ hence
\begin{equation}\label{eq:fkmuk}
f_k(\phi_H^1)=\mu_k(H)=\mu_k(\phi_H^1)+\int_0^1\int_{\S^2}H\,\omega\,dt.
\end{equation}
and in particular we obtain the following strengthening of the bound on the defect in Remark~\ref{rem:difference-of-qm} (which we already stated in Lemma \ref{lem:defect}).

\begin{lem}\label{l:quasi2k}
  The $f_k$ restricted to the disc are quasimorphisms with defect $2/k$.
\end{lem}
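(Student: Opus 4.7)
The plan is to reduce the defect estimate to the already known defect of $\mu_k$ on $\mathrm{Diff}(\S^2,\omega)$, exploiting the vanishing $\mu_1(H)=0$ for Hamiltonians $H$ supported in the (hemi)disc. The improvement from $\tfrac{2}{k}+2$ (the bound from Remark~\ref{rem:difference-of-qm}) to $\tfrac{2}{k}$ comes precisely from the fact that on the disc the $\mu_1$ summand in $f_k = \mu_k - \mu_1$ contributes nothing, so $f_k$ behaves like $\mu_k$ itself.

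First I would record the identity $f_k(\phi^1_H) = \mu_k(H)$ for any $H\in C^\infty_c([0,1]\times\D^2)$, which is the computation given immediately before the lemma in the excerpt: taking the equator $E\subset\S^2$ as a monotone $1$-component link, the Lagrangian control property of Theorem~\ref{thm:mu-k} gives $\mu_1(H)=0$, and the shift property~\eqref{eq:shift} cancels the mean terms between $\mu_k(H)$ and $\mu_k(\phi^1_H)-\mu_1(\phi^1_H)$.

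Second, given $\phi,\psi\in\mathrm{Ham}_c(\D^2,\omega)$ generated by Hamiltonians $H,K$ compactly supported in the open disc, the concatenation $H\sharp K$ is again compactly supported in the disc and generates $\phi\circ\psi$. Applying the identity of the previous paragraph to $H$, $K$, and $H\sharp K$ gives
\[
f_k(\phi\circ\psi) - f_k(\phi) - f_k(\psi) \;=\; \mu_k(H\sharp K) - \mu_k(H) - \mu_k(K).
\]
Since $\phi^t_H$ is area-preserving one has $\overline{H\sharp K} = \bar H + \bar K$, so three applications of the shift property \eqref{eq:shift} convert the right-hand side into
\[
\mu_k(\phi\circ\psi) - \mu_k(\phi) - \mu_k(\psi),
\]
which by item (3) of Theorem~\ref{thm:mu-k} is bounded in absolute value by $\tfrac{2}{k}$. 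This establishes the defect bound on the dense subgroup $\mathrm{Diff}_c(\D^2,\omega)\subset\mathrm{Homeo}_c(\D^2,\omega)$.

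Finally, I would extend the bound from the smooth to the topological level using the $C^0$-continuity of $f_k$ (Theorem~\ref{th:C0continuity-fk}, item (1)) together with the fact that $\mathrm{Diff}_c(\D^2,\omega)$ is $C^0$-dense in $\mathrm{Homeo}_c(\D^2,\omega)$: a defect inequality is preserved under simultaneous $C^0$-approximation of $\phi$ and $\psi$. There is no substantive obstacle here; the entire argument is really bookkeeping of mean values, and the one point that deserves care is the identity $\overline{H\sharp K}=\bar H+\bar K$, which is exactly what makes the contribution of $\mu_1$ drop out and so allows the $+2$ in the naive bound to be removed.
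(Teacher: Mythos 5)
Your proposal is correct and follows essentially the same approach as the paper: the paper records the identity $f_k(\phi^1_H)=\mu_k(H)$ for $H$ compactly supported in the disc (using that Lagrangian control on the equator gives $\mu_1(H)=0$) and then observes that the defect bound $2/k$ is inherited directly from the quasimorphism property of $\mu_k$; your version just makes explicit the shift-property bookkeeping of means and the $C^0$-density argument for passing from $\Diff_c$ to $\Homeo_c$, both of which are implicit in the paper's ``in particular.''
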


Using the Lagrangian control property and a Lagrangian link $\underline L$ consisting of horizontal circles $L_i=\{(x,y,z)\in\S^2\,|\,z=-1 + 2\frac{i}{k+1}\}$, $i=1,\dots, k$, we can compute $f_k$ explicitly for Hamiltonians that only depend on the variable $z$, namely: 
\begin{equation}\label{eq:lag-control-fk}
f_k(\varphi^1_H) = \frac{1}{k} \sum_{i=1}^k H(-1 + 2\tfrac{i}{k+1}).
\end{equation}
This formula will be used in some subsequent sections.



\section{The subleading asymptotics and the Ruelle invariant}\label{s:subleading}

In this section, we first show that the spectral invariants $\{\mu_k\}$ have O(1) subleading asymptotics, and then compute those asymptotics exactly in the case of autonomous disc maps with finitely many critical values.

\subsection{$O(1)$ subleading asymptotics}
The proof that the spectral invariants $\{\mu_k\}$ have $O(1)$ subleading asymptotics in the smooth case is an almost immediate consequence of the key inequality
\begin{equation}\label{eqn:key-defect}
|\mu_k(\psi_0 \psi_1)-\mu_k(\psi_0)-\mu_k(\psi_1)| \le \frac{2}{k} 
\end{equation}
from Lemma~\ref{lem:defect}.

\begin{thm}\label{t:muO(1)}
For any $\psi \in \Diff(\S^2,\omega)$, the sequence $\{k\,\mu_k(\psi)\}_{k \in \mathbb{N}}$ 
is bounded.
\end{thm}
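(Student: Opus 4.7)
My approach combines the defect estimate \eqref{eqn:key-defect} with the fact that $\Diff(\S^2,\omega) = \Ham(\S^2,\omega)$ is a perfect group (in fact simple, by Banyaga). The key observation is that although the defect $2/k$ tends to zero, it does so at exactly the right rate to make $k\mu_k$ bounded on any fixed commutator-word.

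First I would record two standard consequences of $\mu_k$ being a homogeneous quasimorphism with defect $2/k$, as given by Theorem~\ref{thm:mu-k}(3).
\emph{Conjugation invariance}: applying the defect bound and homogeneity to $(ghg^{-1})^n = g h^n g^{-1}$ gives $|n\mu_k(ghg^{-1}) - n\mu_k(h)| \le 4/k$ for every $n$, hence $\mu_k(ghg^{-1}) = \mu_k(h)$.
\emph{Commutator bound}: writing $[a,b] = (aba^{-1}) b^{-1}$, the defect inequality combined with $\mu_k(aba^{-1}) = \mu_k(b)$ and $\mu_k(b^{-1}) = -\mu_k(b)$ yields $|\mu_k([a,b])| \le 2/k$.

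Next I would invoke Banyaga's theorem that $\Ham(\S^2,\omega)$ is perfect. Fixing $\psi \in \Diff(\S^2,\omega)$, this provides a finite integer $N = N(\psi)$ and commutators $c_1, \dots, c_N$ with $\psi = c_1 \cdots c_N$. Iterating \eqref{eqn:key-defect} a total of $N-1$ times telescopes to
\[
\Bigl|\mu_k(\psi) - \sum_{i=1}^N \mu_k(c_i)\Bigr| \le (N-1)\cdot\frac{2}{k},
\]
and combining this with the commutator bound $|\mu_k(c_i)| \le 2/k$ gives
\[
|\mu_k(\psi)| \le (2N-1)\cdot\frac{2}{k}, \qquad \text{hence} \qquad k\,|\mu_k(\psi)| \le 2(2N-1).
\]
The right-hand side depends on $\psi$ through its commutator length but not on $k$, which is precisely the uniform bound claimed.

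There is essentially no obstacle in this route: the whole argument rests on Banyaga's classical perfectness result together with the sharp defect estimate \eqref{eqn:key-defect}, whose $1/k$ scaling is what makes the strategy succeed. The more interesting (and genuinely new) work begins with the second half of Theorem~\ref{t:O(1)}, where one must identify the limit with an expression involving $\Cal$ and $\Ru$ for autonomous Hamiltonians; the argument above only shows boundedness of $k\mu_k(\psi)$, not convergence.
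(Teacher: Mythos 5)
Your proof is correct, but it takes a genuinely different route from the paper's. The paper proves the result by observing that $G_{O(1)}:=\{\psi : k\mu_k(\psi)=O(1)\}$ is, by the same defect estimate and conjugation invariance you use, a normal subgroup of $\Diff(\S^2,\omega)$; it then invokes simplicity of $\Diff(\S^2,\omega)$ to reduce to exhibiting a single non-identity element of $G_{O(1)}$, which is produced via the Lagrangian control property applied to the height function (giving $\mu_k(\phi_H^1)=0$ for all $k$). You instead bypass the normal-subgroup formalism: using only perfectness (a weaker consequence of Banyaga's theorem), you write $\psi$ as a product of $N$ commutators, observe that each commutator has $|\mu_k(\cdot)|\le 2/k$, and telescope the defect inequality to get the explicit bound $k|\mu_k(\psi)|\le 2(2N-1)$. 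Your route has the advantage of being entirely soft — it uses no Floer-theoretic input beyond the defect estimate itself, whereas the paper's proof requires the Lagrangian control computation — and it produces a quantitative bound in terms of the commutator length of $\psi$. The paper's route, on the other hand, is shorter once one accepts simplicity, and it is the same scheme the authors reuse verbatim for Propositions~\ref{prop:N-disc} and~\ref{prop:N-sphere}, where the ambient group is no longer simple and one must genuinely exhibit an explicit nontrivial element; so from the point of view of the rest of the paper their framing is more economical. Both arguments are sound.
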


\begin{proof}
Let $G_{O(1)}:=\{\psi \in \Diff(\S^2,\omega)| k\mu_k(\psi)=O(1)\}$. Equation \eqref{eqn:key-defect} shows both that if $\psi_0,\psi_1 \in G_{O(1)}$, then so is the product $\psi_0 \psi_1$, and also that  $\psi \in G_{O(1)}$ if and only if $\psi^{-1} \in G_{O(1)}$.
Therefore, $G_{O(1)}$ is a subgroup of $\Diff(\S^2,\omega)$.
Since $\mu_k(\psi)$ is invariant under conjugating $\psi$ by elements in $\Diff(\S^2,\omega)$,  $G_{O(1)}$ is a normal subgroup.

Since $\Diff(\S^2,\omega)$ is simple, to show $G_{O(1)} = \Diff(\S^2,\omega)$ it therefore suffices to show that $G_{O(1)}$ contains a single non-identity element.  Let $H$ be the height function (projection to $z$ co-ordinate) of $\S^2 \subset \mathbb{R}^3$.
Let $\ul{L}_k$ be the $k$-component monotone link all of whose components are level sets of $H$. 
By the Lagrangian control property, we have $\mu_k(H)=0$. 
Since $H$ is mean-normalized, we have $\mu_k(\phi_H^1)=0$, but $\phi_H^1$ is not the identity element in $\Diff(\S^2,\omega)$. The result follows. 
\end{proof}

By restricting the $\{\mu_k\}$ to Hamiltonians on $\S^2$ supported (for instance) in a hemisphere, we immediately obtain:

\begin{cor}\label{corol:O(1)}
Let $\bD^2$ be a disc in $\S^2$ with area at most half of that of $\S^2$.
For any $\psi \in \Diff_{\mathrm{c}}(\bD^2,\omega)$,  the sequence $\{k \cdot (f_k(\psi) -\Cal(\psi))\}_{k \in \mathbb{N}}$ is bounded.
\end{cor}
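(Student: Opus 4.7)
The plan is to deduce the corollary directly from Theorem \ref{t:muO(1)} by establishing the identity
\[
f_k(\psi) - \Cal(\psi) = \mu_k(\psi)
\]
for all $\psi \in \Diff_c(\bD^2,\omega)$, where $\psi$ is regarded inside $\Diff(\S^2,\omega)$ via the inclusion $\bD^2\hookrightarrow \S^2$. Once this identity is in hand, applying Theorem \ref{t:muO(1)} to the right-hand side immediately gives the uniform boundedness of $\{k(f_k(\psi)-\Cal(\psi))\}_{k\in \mathbb{N}}$.

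To prove the identity, pick a smooth Hamiltonian $H:[0,1]\times \S^2 \to \R$ supported in $\bD^2$ with $\phi_H^1=\psi$. By definition $f_k(\psi)=f_k(H)=\mu_k(H)-\mu_1(H)$. Since $\bD^2$ has area at most half that of $\S^2$, its complement contains a smooth simple closed curve $L_1$ that bisects the area of $\S^2$ -- in other words, a monotone link with one component. Now $H$ vanishes on $L_1$, so the Lagrangian control property (Theorem \ref{thm:mu-k}(2)) forces $\mu_1(H)=0$, giving $f_k(\psi)=\mu_k(H)$. To compare $\mu_k(H)$ with $\mu_k(\psi)$, I use the shift property \eqref{eq:shift}: the mean-normalization of $H$ is $H$ minus its (time-dependent) mean on $\S^2$, so $\mu_k(H) = \mu_k(\psi) + \int_0^1\int_{\S^2}H_t\,\omega\, dt$. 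The last integral equals $\Cal(\psi)$ since $H$ is supported in $\bD^2$, which completes the identity.

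I do not anticipate any serious obstacle: this is essentially bookkeeping, combining Theorem \ref{t:muO(1)} with the Lagrangian control and shift properties recorded in Section \ref{sec:prelim}. The one point worth stressing is why the hypothesis ``area at most half'' is used: a monotone $1$-component link by definition must bisect the area of $\S^2$, so we need such a curve to fit inside the complement of $\bD^2$ in order for the Lagrangian control argument to force $\mu_1(H)=0$. Without this, the reduction to $\mu_k$ would pick up an extra bounded-but-nonzero error term, and one would have to argue separately that such an error is still $O(1/k)$ (which it would be, via the general defect bound in Remark \ref{rem:difference-of-qm}, but the cleaner identity is more useful for applications elsewhere in the paper).
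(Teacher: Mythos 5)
Your proof is correct and takes essentially the same route as the paper: the paper's proof of Corollary \ref{corol:O(1)} is a one-liner citing $f_k(\psi)=\mu_k(\psi)-\mu_1(\psi)$, $\mu_1(\psi)=-\Cal(\psi)$, and Theorem \ref{t:muO(1)}, and the intermediate facts you re-derive (Lagrangian control giving $\mu_1(H)=0$, the shift property converting between $\mu_k(H)$ and $\mu_k(\psi)$) are exactly what is already recorded in Section \ref{sec:qm-on-disc} to justify those identities. You have simply unpacked the bookkeeping explicitly rather than citing it.
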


\begin{proof}
 It follows from 
\eqref{eq:fkmuk}
 and  Theorem \ref{t:muO(1)}.
\end{proof}

\subsection{Autonomous Hamiltonians}

For general smooth Hamiltonian diffeomorphisms on the disc, we know from above that $k\cdot (f_k(\psi) - \Cal(\psi))$ is bounded as $k\to \infty$, but not that this sequence has a well-defined limit.  For autonomous maps with finitely many critical \emph{values}, the limit does exist, and is determined by the classical Ruelle invariant from Section \ref{ss:Ruelle}: showing this is the aim of this section. 

The main result is the following.
\begin{thm}\label{t:ruelle}
Let $H:(\bD^2,\omega) \to \mathbb{R}$ be a compactly supported autonomous Hamiltonian with finitely many critical  values. Then, 
\begin{align}
\lim_{k \to \infty} \left( k\mu_{k}(H)-(k+1) \Cal(H)\right) =  -\frac{1}{2}\Ru(H). \label{eq:formula}
\end{align}
\end{thm}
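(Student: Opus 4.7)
The strategy is to exploit the Lagrangian control property of $\mu_k$ by constructing an explicit monotone link $\underline L_k$ whose components are components of level sets of $H$, computing $\mu_k(H)$ directly via Lagrangian control, and extracting the subleading asymptotic by an Euler--Maclaurin analysis of the resulting finite sum. I normalise throughout so that $\omega(\S^2)=1$, consistent with \eqref{eq:lag-control-fk}. The level-set structure of $H$ is encoded in its Reeb tree $\Gamma_H$, with vertices at the (finitely many) critical values and one distinguished ``zero vertex'' representing $\{H=0\}$ outside $\supp(H)$; its edges parametrise one-parameter families of level-set circles at regular values. For each edge $e$ running between consecutive critical values $c_-^e < c_+^e$, let $\alpha_e : (c_-^e, c_+^e) \to (0, A_e)$ denote the monotone bijection sending $\xi$ to the area of the ``finite-area'' region bounded by the $\xi$-level circle on $e$. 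I would place $k_e$ level circles on each edge at heights $\xi^e_j = \alpha_e^{-1}(j/(k+1)+\mathrm{offset}_e)$, together with extra circles in the zero region (which contribute nothing under Lagrangian control); the $k_e$ and offsets are chosen so that $\sum_e k_e = k$ and the $k+1$ complementary regions of $\underline L_k$ all have area $1/(k+1)$, up to $O(1/k^2)$ saddle-matching errors. Lagrangian control (Theorem~\ref{thm:mu-k}(2)) then gives $\mu_k(H) = \tfrac{1}{k}\sum_e \sum_{j=1}^{k_e}\xi^e_j$.

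For each edge $e$, applying the trapezoidal Euler--Maclaurin expansion to the Riemann sum $\sum_j \alpha_e^{-1}(j/(k+1)+\mathrm{offset}_e)$ yields
\[
\sum_{j=1}^{k_e}\xi^e_j \;=\; (k+1)\,\Cal_e \;+\; \delta_e \;+\; O(1/k),
\]
where $\Cal_e := \int_0^{A_e}\alpha_e^{-1}(s)\,ds$ is the edge's contribution to $\Cal(H)$ under the Fubini/coarea decomposition $\Cal(H) = \sum_e \Cal_e$, and $\delta_e$ is a boundary correction determined by the endpoint values of $\alpha_e^{-1}$ and by the saddle offsets. Summing over $e$ reduces the theorem to the identity $\sum_e \delta_e \to -\tfrac{1}{2}\Ru(H)$.

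This last identity is the principal obstacle. At each critical value $c$ of $H$, the endpoint corrections from the incident edges of $\Gamma_H$ (combining both Euler--Maclaurin endpoint terms and the integral-range shifts associated with the saddle offsets) should aggregate, guided by the orientation convention in the definition of $n_H(\xi)$ preceding \eqref{eq:RuelleFormula}, to $\tfrac{1}{2} c \cdot \Delta n_c$ where $\Delta n_c := n_H(c^+) - n_H(c^-)$; summing over critical values and invoking the identity $\Ru(H) = -\sum_c c\,\Delta n_c$ (integration by parts in $\Ru = \int_\R n_H(\xi)\, d\xi$) then yields $\sum_e\delta_e = -\tfrac{1}{2}\Ru(H)$. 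Single-edge Reeb trees (radial bumps and wells, where $\alpha^{-1}$ is affine) serve as explicit sanity checks; in each the full limit can be computed by hand, giving $-H_{\max}/2$ or $-H_{\min}/2$ respectively, in agreement with the Morse formula \eqref{eqn:Ruelle_Morse_case}. The combinatorially delicate case is a saddle at value $c_s$, where three edges of $\Gamma_H$ meet and the ``finite-area side'' conventions on the three incident edges must be reconciled with the $\pm 1$ signs attached to each level-set component in the definition of $n_H$.
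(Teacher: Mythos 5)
Your route --- build the Reeb tree, construct monotone links $\ul L_k$ lying mostly on level sets of $H$, evaluate $\mu_k(H)$ by Lagrangian control, and extract the $O(1/k)$ term by an edge-by-edge Riemann-sum (trapezoid/Euler--Maclaurin) expansion with vertex corrections --- is precisely the strategy the paper follows. The paper forms the Reeb tree as the quotient $G=\S^2/\sim$ with pushforward measure $\mu$, and first proves Proposition~\ref{p:ruelle}, expressing the limit as $-\tfrac12\sum_i\chi_iH_G(v_i)$ with $\chi_i=2-\val(v_i)$; the translation to $-\tfrac12\Ru(H)$ is then a separate combinatorial lemma, equivalent to your integration-by-parts identity $\Ru(H)=-\sum_c c\,\Delta n_c$.

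However there are genuine gaps precisely where the work is. First, the entire content of the theorem lives in the step you defer: that the endpoint and saddle corrections aggregate to $-\tfrac12\Ru(H)$. You flag this as ``the principal obstacle'' but do not prove it. The paper's resolution (Lemma~\ref{l:linkExist} together with the estimates \eqref{eq:est}, \eqref{eq:collapsing1}, \eqref{eq:collapsing2}) requires partitioning the circles near each vertex $v_i$ into $\lfloor(k+1)\mu(v_i)\rfloor$ circles that can be fitted inside $R^{-1}(v_i)$ plus exactly $\val(v_i)-1$ leftover circles that are forced to Hausdorff-converge to $R^{-1}(v_i)$, and only after tracking all three contributions does the coefficient $\chi_i=2-\val(v_i)$ emerge. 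Second, Lagrangian control requires the link to be \emph{exactly} monotone, so the ``$O(1/k^2)$ saddle-matching errors'' you allow in the complementary areas would disqualify the construction as stated; instead one must accept that a bounded number of circles per vertex do not lie on level sets of $H$, and the spectral value then lands in an interval whose width one must show goes to zero. Third, you only allow extra circles in the zero region, but $H$ with finitely many critical values may have plateaus ($\mu(v_i)>0$) at nonzero heights $H_G(v_i)\ne 0$; these contribute approximately $(k+1)\mu(v_i)\,H_G(v_i)$ to the un-normalised sum and must be tracked, as in Lemma~\ref{l:opendisk} and the $T_{k,2,i}$ circles.
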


Theorem \ref{t:O(1)} directly follows from  Theorem \ref{t:muO(1)}, Corollary \ref{corol:O(1)} and Theorem \ref{t:ruelle}.
The proof of Theorem \ref{t:ruelle} is independent of the rest of the paper and reader might want to skip it in a first reading.

\begin{rem}
The coefficient $k+1$ of $\Cal(H)$ is the reciprocal of the monotonicity constant of a $k$-component link $\ul{L}_k$ (i.e. the area of a connected component of $\S^2 \setminus \ul{L}_k$).
\end{rem}

The proof will use monotone Lagrangian links $\ul{L}_k$ `most' of whose connected components are contained in level sets of $H$.
In order to describe these links, we need the notion of the Reeb graph.

Let $H:(\S^2,\omega) \to \mathbb{R}$ be an autonomous Hamiltonian with finitely many critical values.
We define an equivalence relation $\sim$ on $\S^2$ via  $x \sim y$ if and only if they lie in the same connected component of a level set of $H$.
Let $G:=\S^2/\sim$ be the Reeb graph of $H$ equipped with the quotient topology, and $R: \S^2 \to G$ be the associated quotient map. There is a uniquely defined continuous function $H_{G}:G \to \mathbb{R}$ such that $H=H_{G} \circ R$.

\begin{lem}\label{l:tree}
The space $G$ is homeomorphic to a finite tree.
\end{lem}

\begin{proof}
We are going to describe a finite graph structure on $G$ and then we will show that it is a tree.

We define the set of vertices of $G$ to be the  $H_G$-preimage of the set of critical values of $H$.
We want to show that the set of vertices is finite.
It suffices to show that for every critical value $c$ of $H$, $H^{-1}(c)$ has only finitely many connected components.
Let $J_1 \supset J_2 \supset \dots$ be a nested sequence of open intervals such that $\cap_n J_n=\{c\}$.
By possibly passing to a subsequence, we can assume that $c$  is the only critical value of $H$ in $\overline{J_1}$. 
Under this assumption, the number of connected components of $H^{-1}(J_n)$ equals to the number of connected components of $H^{-1}(\overline{J_n})$ and it is a finite number $k_c$ that is independent of $n$.
Recall that the intersection of a nested sequence of connected compact sets is connected.
Therefore, $H^{-1}(c)$ also has $k_c$ connected components.

We define the complement of the vertices of $G$ to be the open edges of $G$.
We need to show that the complement of the vertices has finitely many connected components and every connected component is homeomorphic to $(0,1)$.
But it follows easily from the fact that for any two consecutive critical values $c_0<c_1$ of $H$, $H|_{H^{-1}((c_0,c_1))}: H^{-1}((c_0,c_1)) \to (c_0,c_1)$ is submersive and hence is a fibre bundle.


Finally, we want to show that $G$ is a tree.
The second paragraph above implies that if $v$ is a vertex of $G$ and $V \subset G$ is a small connected open neighborhood of $v$, then $R^{-1}(V)$ is open and connected, and hence path-connected.
If $G$ were not a tree, then the aforementioned path-connectedness would enable us to find a smooth circle $C \subset \S^2$ and a point $q \in C$ such that $R(C)$ is a non-trivial cycle in $G$, $R(q)$ is on an open  edge of $G$ and $R^{-1}(R(q)) \pitchfork C=\{q\}$.
This would imply that the intersection pairing $[R^{-1}(R(q))] \cdot [C]= \pm 1 \neq 0$, contradicting the fact that $H_1(\S^2;\bZ)=0$. 
\end{proof}

We record here a useful consequence of the argument in Lemma \ref{l:tree} here:

\begin{lem}\label{l:connected}
For any connected open set $U \subset G$, the set $R^{-1}(U) \subset \S^2$ is connected.
\end{lem}

\begin{proof}
We use the sets $J_n$ in the proof of Lemma \ref{l:tree}.
When $V \subset G$ is a connected component of $H_G^{-1}(J_n)$, $R^{-1}(V)$ is one of the connected components of $H^{-1}(J_n)$.
Since $R^{-1}(V)$ is also open, it is path-connected.

On the other hand, when $V\subset G$ is a connected open set not containing any vertex of $G$, $R^{-1}(V)$ is clearly also path connected. 

For a general connected open set $U \subset G$, we can write it as a union of open sets $U = \cup_{\alpha} V_{\alpha}$, each of the $V_{\alpha}$'s is of one of the two types above.
Since every $R^{-1}(V_{\alpha})$ is path-connected and $U$ is path-connected, we conclude that $R^{-1}(U)$ is path-connected and hence connected.
\end{proof}

Let $t$ be the number of vertices of $G$ and enumerate the vertices $v_1,\dots,v_t$. 
Let $\mu$ be the Borel measure on $G$ such that for every open set $U \subset G$, we define 
\[
\mu(U):=\int_{R^{-1}(U)} \omega.
\]
For $i=1,\dots,t$, let 
\[
m_i:=\mu(v_i) \in [0,1].
\]
Note that for any $x \in G \setminus \{v_1,\dots,v_t\}$, we have $\mu(x)=0$. 
Many of the components of our desired links $\ul{L}_k$ will be of the form $R^{-1}(x)$ for some $x \in G \setminus \{v_1,\dots,v_t\}$. 
But when $m_i>0$ and $k$ is large, we also need to put many components near $R^{-1}(v_i)$ for $\ul{L}_k$ to be monotone.
There are connected closed subsets in $\S^2$ with positive measure but empty interior so we have to be particularly careful near $R^{-1}(v_i)$ when we construct $\ul{L}_k$. To construct $\ul{L}_k$, we need to first explain a decomposition of $G$ using $\mu$.

For the following decomposition of $G$, we assume that $k \in \mathbb{N}$ satisfies $\frac{3}{k+1} < \mu(e)$  for every open edge $e \in G$.
For $i=1,\dots,t$, let $\{U_{i,j}\}_{j=1}^{s_i}$ be the connected components of $G \setminus \{v_i\}$, where $s_i:=\val(v_i)$ is the valency of $v_i$, which equals to the number of connected components of $G \setminus \{v_i\}$.
Denote $\mu(U_{i,j})$ by $a_{i,j}$ so we have $m_i+\sum_{j=1}^{s_i} a_{i,j}=1$.
By our assumption on $k$, we have $a_{i,j}>\frac{3}{k+1}$ for all $i,j$.
Let $r_{i,j} \in (0,\frac{1}{k+1}]$ be the unique number such that $a_{i,j}-r_{i,j}$ is an integer multiple of $\frac{1}{k+1}$.

For any $v_i$ and any $j=1,\dots,s_i$, we define $x_{i,j} \in U_{i,j}$ to be the unique point such that $x_{i,j}$ is on an edge adjacent to $v_i$
and the open interval between $v_i$ and $x_{i,j}$ has $\mu$-measure $r_{i,j}$.
The existence of $x_{i,j}$ is guaranteed by the assumption $\frac{3}{k+1} < \mu(e)$, whilst its  uniqueness comes from the fact that $G$ is a tree (so there is a bijective correspondence between edges adjacent to $v_i$ and connected components $U_{i,j}$). 
Moreover, again by the assumption $\frac{3}{k+1} < \mu(e)$, we have that $x_{i,j} \neq x_{i',j'}$ unless $i=i'$ and $j=j'$. 
Most importantly, by our choice of $r_{i,j}$, each connected component of $G \setminus \{x_{i,j}\}_{i,j}$ has $\mu$-measure being an integer multiple of $\frac{1}{k+1}$.
Denote the component of $G \setminus \{x_{i,j}\}_{i,j}$ containing $v_i$ by $V_i$.
Let $S_{k,i}:=\lfloor (k+1)m_i \rfloor$.
By construction, we have $\mu(V_i)=m_i+ \sum_{j=1}^{s_i} r_{i,j} \in (m_i, m_i+ \frac{\val(v_i)}{k+1}]$ so $(k+1)\mu(V_i) $ is an integer in the interval $(S_{k,i}, S_{k,i}+\val(v_i)]$.

As a summary, the graph $G$ is partitioned into the $V_i$ and segments in the edges, all of whose $\mu$-measures
are integer multiples of $\frac{1}{k+1}$. Moreover, the
 $\mu$-measure of each $V_i$ exceeds $m_i$ by less than $\frac{\val(v_i)}{k+1}$.

\begin{construction}\label{c:linkExist}
We consider a monotone Lagrangan link $\ul{L}_k$ comprising of the following $3$-types of circles, which we call them type $T_1$, type $T_{2,i}$ ($i=1,\dots,t$) and type $T_{3,i}$ ($i=1,\dots,t$), respectively (see Figure \ref{fig:link}). 

{\bf Type $T_1$ circles:}  any component of $G \setminus \{x_{i,j}\}_{i,j}$ not containing any $v_i$ is an interval. We can subdivide that interval so that each sub-interval has $\mu$-measure $\frac{1}{k+1}$.
Let $X$ be the union of $\{x_{i,j}\}_{i,j}$ and the additional points we added to subdivide the intervals.
Then $R^{-1}(X)$ gives us 
\[|X| =k-\sum_{i=1}^t ((k+1)\mu(V_i) -1)\]
 circles in $\S^2$. 
For each $i=1,\dots,t$, we remove 
\[S_{k,i}+\val(v_i)-(k+1)\mu(V_i)\] many circles from $R^{-1}(X)$ that are closest to $R^{-1}(V_i)$.
The remaining  circles, \[k-\sum_{i=1}^t (S_{k,i}+\val(v_i) -1)\] many, are the type $T_1$ circles of $\ul{L}_k$.








\begin{figure}[h!]
\centering
\includegraphics{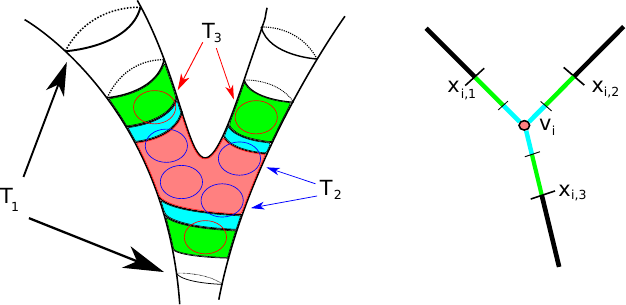}
\caption{On the left: $R^{-1}(J_i)$ (the union of pink and light blue regions) contains $R^{-1}(v_i)$ (pink region) and type $T_{2,i}$ circles (blue). $R^{-1}(V_i)$ (the union of pink, light blue and light green regions) contains both type $T_{3,i}$ circles (red) and type $T_{2,i}$ circles. Type $T_1$ circles (black) are level sets outside the interior of $R^{-1}(V_i)$. On the right: we indicate a neighborhood of the vertex $v_i$ in $G$, coloured to indicate  the images of the respective regions on the left.}
\label{fig:link}
\end{figure}




{\bf Type $T_{2,i}$ circles:} 
Let $J_i \subset V_i$  be a connected open set containing $v_i$ such that $\max_{J_i} H_G - \min_{J_i} H_G < \frac{1}{k^2}$.
By Lemma \ref{l:connected}, we know that $R^{-1}(J_i)$ is a connected open set whose closure has smooth boundary.
 Since the area of $R^{-1}(J_i)$ is $\mu(J_i)>m_i \ge \frac{S_{k,i}}{k+1}$, we can choose $S_{k,i}$ pairwise disjoint discs  of area $\frac{1}{k+1}$ in $R^{-1}(J_i)$.
The boundary of these discs are the type $T_{2,i}$ circles of $\ul{L}_k$.


{\bf Type $T_{3,i}$ circles:}  
The complement of the associated $S_{k,i}$ disjoint closed discs in $R^{-1}(V_i)$ is also a connected open subset of $\S^2$ with area $\mu(V_i)-\frac{S_{k,i}}{k+1}$.
Therefore, we can put an additional set of 
\[(k+1)\mu(V_i) -S_{k,i}-1\] circles, each bounding a disc of area $\frac{1}{k+1}$ again, to obtain a further collection of circles in $R^{-1}(V_i)$.
Adding the 
\[S_{k,i}+\val(v_i)-(k+1)\mu(V_i)\]
many circles that are in $R^{-1}(X)$ but not of type $T_1$, we 
get a collection of 
\[
\left( (k+1)\mu(V_i) -S_{k,i}-1 \right)+ \left( S_{k,i}+\val(v_i)-(k+1)\mu(V_i) \right)=\val(v_i)-1\]
 circles, which are called
type $T_{3,i}$ circles of $\ul{L}_k$.

We use the notation $\ul{L}_{k,j}$ for $j \in T_1$ (resp. $j \in T_{2,i}$, $j \in T_{3,i}$) to refer to a connected component of $\ul{L}_k$ of type $T_1$ (resp. $T_{2,i}$, $T_{3,i}$).
\end{construction}

First note that $\ul{L}_k$  is indeed a $k$-component monotone link, because all the components of its complement  have area $\frac{1}{k+1}$. On top of this, the three types of circles above have the following features respectively:
\begin{enumerate}
\item $H$ is locally constant on type $T_1$ circles.
\item The value of $H$ on type $T_{2,i}$ circles is $m_i$ up to an error of $\frac{1}{k^2}$ at most. 
\item There are remaining components not of the first two types, but the number of them is $\val(v_i)-1$, whch is independent of $k$. Moreover, they are arbitrarily close to $R^{-1}(v_i)$ when we increase $k$.
In particular, the oscillation of $H$ over these components converges to $0$ when $k$ goes to infinity.
\end{enumerate}

For each vertex $v_i \in G$,  let $\chi_i:=2-\val(v_i)$.
We are now going to use the monotone links $\ul{L}_k$ from Construction \ref{c:linkExist} to prove the following.

\begin{prop}\label{p:ruelle}
Let $H:(\S^2,\omega) \to \mathbb{R}$ be an autonomous Hamiltonian with finitely many critical  values. Then
\begin{align}
\lim_{k \to \infty} \left( k\mu_{k}(H)-(k+1) \int_{\S^2} H\right) =  -\frac{1}{2}\sum_{i=1}^t \chi_i H_G(v_i) \label{eq:formulaS2}
\end{align}
\end{prop}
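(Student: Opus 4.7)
The plan is to apply the sandwich form of Lagrangian control (Remark \ref{r:strongerLagcontrol}) to the monotone links $\ul{L}_k$ of Lemma \ref{l:linkExist}, with the neighbourhoods $J_i = J_i(k)$ shrinking to $\{v_i\}$ sufficiently fast as $k \to \infty$, and then to carry out an edge-by-edge trapezoidal-rule analysis on $G$. The components of $\ul{L}_k$ partition into (a) level-set circles of $H$ (those in $R^{-1}(X)$, comprising $T_{k,1}$ together with the $X$-point part of $T_{k,3,i}$) and (b) the $(k+1)\mu(V_i) - 1$ disk-bounding circles inside $R^{-1}(V_i)$ for each vertex $v_i$, i.e., $T_{k,2,i}$ together with at most $\val(v_i)$ further circles.

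On class (a) circles the oscillation of $H$ vanishes; on class (b) it is bounded by $\mathrm{osc}_{R^{-1}(J_i)} H$ for $T_{k,2,i}$ and by $\mathrm{osc}_{R^{-1}(V_i)} H$ for the additional circles. Since $|T_{k,2,i}| = S_{k,i} \sim k m_i$ but there are only $O(1)$ additional circles, a choice $J_i = J_i(k)$ with $\mathrm{osc}_{R^{-1}(J_i(k))} H = o(1/k)$ --- possible because $H$ is continuous on $\S^2$ and identically $H_G(v_i)$ on $R^{-1}(v_i)$ --- makes the total sandwich slack $o(1)$. Combined with $H|_{L_{k,j}} = H_G(v_i) + o(1)$ on class (b) circles, this reduces matters to
\[ k\mu_k(H) = \sum_{x \in X} H_G(x) + \sum_i \big[(k+1)\mu(V_i) - 1\big] H_G(v_i) + o(1). \]

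To evaluate the first sum, parametrise each edge $e$ between $v_{i_0(e)}, v_{i_1(e)}$ by its $\mu$-coordinate; the level-set circles along $e$ sit at evenly spaced positions $s_u = r_0^{(e)} + u/(k+1)$, $u = 0,\ldots,n_e$, where $r_0^{(e)}, r_1^{(e)} \in (0, 1/(k+1)]$ are the endpoint offsets. The trapezoidal rule, with error $O(k^{-2})$ per sub-interval and hence $o(1)$ overall after multiplying by $k+1$, gives
\[ \sum_{u=0}^{n_e} H_G(s_u) = (k+1)\!\int_{r_0^{(e)}}^{\mu(e)-r_1^{(e)}}\! H_G(s)\,ds + \tfrac{H_G(s_0) + H_G(s_{n_e})}{2} + o(1). \]
Using $(k+1)\int_0^{r} H_G(s)\,ds = (k+1) r\, H_G(v_i) + o(1)$ for $r \in (0,1/(k+1)]$ and $H_G(s_0), H_G(s_{n_e}) \to H_G(v_{i_0(e)}), H_G(v_{i_1(e)})$ by continuity, then summing over all edges via $\int_{\S^2} H\,\omega = \sum_e \int_{R^{-1}(e)} H\,\omega + \sum_i m_i H_G(v_i)$ and $\mu(V_i) = m_i + \sum_j r_{i,j}$ (each $v_i$ is an endpoint of $\val(v_i)$ edges), one finds
\[ \sum_{x \in X} H_G(x) = (k+1)\!\int_{\S^2} H\,\omega - (k+1)\sum_i \mu(V_i) H_G(v_i) + \sum_i \tfrac{\val(v_i)}{2} H_G(v_i) + o(1). \]
Substituting this back, the $(k+1)\mu(V_i)$ terms cancel, yielding
\[ k\mu_k(H) - (k+1)\!\int_{\S^2} H\,\omega = \sum_i H_G(v_i)\left[\tfrac{\val(v_i)}{2} - 1\right] + o(1) = -\tfrac{1}{2}\sum_i \chi_i H_G(v_i) + o(1). \]

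The main obstacle is achieving $o(1)$ rather than merely $O(1)$ sandwich slack: a fixed $J_i$ would produce an $O(k \cdot \mathrm{osc}_{R^{-1}(J_i)} H)$ error since $|T_{k,2,i}|$ scales linearly in $k$, so $J_i(k)$ must shrink along with $k$. Continuity of $H$, together with the fact that $H$ is constant on the closed set $R^{-1}(v_i)$, is precisely what supplies the required decay $\mathrm{osc}_{R^{-1}(J_i(k))} H = o(1/k)$. The at-most-$\val(v_i)$ additional non-level-set circles inside $V_i$ are controlled for free, being $O(1)$ in number while $\mathrm{osc}_{R^{-1}(V_i)} H \to 0$ as $V_i$ shrinks.
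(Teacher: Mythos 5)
Your proposal is correct and follows essentially the same route as the paper: both apply the sandwich form of Lagrangian control to the links of Lemma~\ref{l:linkExist} with $J_i(k)$ shrinking as $k\to\infty$, and both extract the half-weight contributions at the vertices via an edge-by-edge Riemann-sum analysis (your trapezoidal rule is the paper's Taylor-plus-telescoping argument in different clothing). The only differences are bookkeeping choices --- you split the circles into level-set versus disk-bounding, while the paper splits by annuli ($T_{k,1}$) versus neighborhood-of-vertex ($T_{k,2,i}\cup T_{k,3,i}$) and proves three separate limit identities --- but the cancellations and the final formula are identical.
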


\begin{rem}
When $H$ is Morse, \eqref{eq:formulaS2} reduces to
\begin{align*}
\lim_{k \to \infty} \left( k\mu_{k}(H)-(k+1) \int_{\S^2} H \right) = -\frac{1}{2}\sum_{j=1}^s (-1)^{\mathrm{ind}(p_j)} H(p_j)
\end{align*}
where $\{p_j\}_{j=1}^s$ is the set of critical points of $H$. Compare to Equation \eqref{eqn:Ruelle_Morse_case}.
\end{rem}

\begin{proof}[Proof of Proposition \ref{p:ruelle}]
For every $k \in \mathbb{N}$ such that $\frac{3}{k+1}<\mu(e)$, we apply Construction \ref{c:linkExist} to obtain a monotone link $\ul{L}_k$ as above.
We continue to use the notations $V_i$, $J_i$, $T_1$, $T_{2,i}$, $T_{3,i}$, etc, but they should be understood that they depend on $k$.

The Lagrangian control property \ref{eq:strongerLagcontrol}
applied to the link $\ul{L}_k$ yields
\begin{align}
k\mu_k(H) \in \sum_{j=1}^k H(L_{k,j}),  \label{eq:LagControlmu}
\end{align}
where $H(L_{k,j}):=\{H(y)| y \in L_{k,j}\}$.

We want to show the following three identities corresponding to the three types of circles:
\begin{align}
&\lim_{k \to \infty} \left(\left( \sum_{j \in T_{1}} H(L_{k,j}) \right)-(k+1) \int_{\S^2 \setminus \cup_{i=1}^t R^{-1}(V_i)} H \right) =\frac{1}{2} \sum_{i=1}^t  \val(v_i) H_G(v_i),  \label{eq:est}\\
&\lim_{k \to \infty} \left(\left( \sum_{j \in T_{2,i}} H(L_{k,j}) \right)- (k+1) \int_{R^{-1}(V_i)} H \right)=-\val(v_i)H_G(v_i),  \text{ for all }i, \label{eq:collapsing1}\\
&\lim_{k \to \infty} \sum_{j \in T_{3,i}} H(L_{k,j})=(\val(v_i)-1) H_G(v_i), \text{ for all }i. \label{eq:collapsing2}
\end{align}
Note that these are limits of sets, since not all the $L_{k,j}$ are contained in level sets of $H$; however, the $J_{i}$ shrink with $k$ and hence the diameters of the sets $H(L_{k,j})$ also tend to zero as $k$ increases to infinity.  Once these equalities are proved, by summing them up
we will get 
\begin{align}
\lim_{k \to \infty} \left( \sum_{j=1}^k H(L_{k,j}) -(k+1) \int_{\S^2} H\right) = \frac{1}{2} \sum_{i=1}^t  (\val(v_i)-2) H_G(v_i) .\label{eq:cestimate}
\end{align}
The result then follows from \eqref{eq:LagControlmu} and the observation that the RHS of \eqref{eq:cestimate} is precisely $-\frac{1}{2}\sum_{i=1}^t \chi_i H_G(v_i)$.

We should first explain the basic reasoning behind the identities \eqref{eq:est}, \eqref{eq:collapsing1} and \eqref{eq:collapsing2}.
Since type $T_1$ circles are $S^1$ fibres over the open edges, \eqref{eq:est} essentially follows from a classical disintegration statement which computes the difference between the integral of a function over an interval and its Riemann sum.
On the other hand, combining \eqref{eq:collapsing1} and \eqref{eq:collapsing2} gives that the total contribution of type $T_{2,i}$ and $T_{3,i}$ circles is roughly $(k+1) \int_{R^{-1}(V_i)} H$ (offset by the constant $H_G(v_i)$ which is independent of $k$).
This is essentially because the oscillation of $H$ over type $T_{2,i}$ circles is bounded above by $\frac{1}{k^2}$
and the oscillation of $H$ over type $T_{3,i}$ circles converges to $0$.
The computations below are to carefully obtain the correct coefficients in front of $H_G(v_i)$ in these identities.

Identity \eqref{eq:est} can be proved `edge by edge'.
More precisely, let $E$ be the set of edges of $G$.
Each connected component of $\S^2 \setminus \bigcup_{j \in T_{1}} L_{k,j}$  other than $V_{k,1},\dots,V_{k,t}$ is topologically an annulus and is canonically labeled by an edge $e \in E$.
Let $A^e_{k,1},\dots,A^e_{k,h_{e,k}}$ be the connected components
that are labeled by $e$ and denote their closures by $\overline{A}^e_{k,i}$ for $i=1,\dots,h_{e,k}$.
By possibly relabeling, we can assume that $\overline{A}^e_{k,i} \cap \overline{A}^e_{k,j} \neq \emptyset$ if and only if $j \in \{i-1,i,i+1\}$.
Let $\partial \overline{A}^e_{k,i}=L^e_{k,i-1} \cup L^e_{k,i}$ for $i=1,\dots,h_{e,k}$.
Note that, every component of $\bigcup_{j \in T_{1}} L_{k,j}$ is of the form $L^e_{k,i}$ for precisely one $e \in E$ and $i \in \{0,\dots,h_{e,k}\}$.
By identifying $A^e_{k,i}$ with $([\frac{i-1}{k+1},\frac{i}{k+1}] \times \mathbb{R}/\mathbb{Z}, dz \wedge dy)$ using an $S^1$-equivariant area preserving diffeomorphism, in such a way that $L^e_{k,i}$ is identified with $\{i\} \times \mathbb{R}/\mathbb{Z}$, we have
\begin{align*}
&\sum_{i=1}^{h_{e,k}} \left( H(L^e_{k,i})-(k+1) \int_{A^e_{k,i}} H \right)\\
=&\, (k+1)\sum_{i=1}^{h_{e,k}} \int_{\frac{i-1}{k+1}}^{\frac{i}{k+1}} -H'(\frac{i}{k+1})(z-\frac{i}{k+1}) + O\left(\frac{1}{(k+1)^2}\right)dz\\
=&\, \left(\frac{1}{2(k+1)} \sum_{i=1}^{h_{e,k}} H'(\frac{i}{k+1}) \right)+ h_{e,k}\, O\left(\frac{1}{(k+1)^2}\right)
\end{align*}
where Taylor's theorem was used in passing from the first to the second line above.
Note that $h_{e,k} < (k+1)$.
By passing $k$ to $\infty$ and applying the fundamental theorem of calculus, we get
\begin{align*}
\lim_{k \to \infty} \sum_{i=1}^{h_{e,k}} \left(H(L^e_{k,i})-(k+1) \int_{A^e_{k,i}} H\right) 
=&\lim_{k \to \infty} \frac{1}{2} (H(L^e_{k,h_{e,k}})-H(L^e_{k,0}))\\
=&\frac{1}{2} (H_G(\partial^1 e)-H_G( \partial^0 e))
\end{align*}
where $\partial^i e$ are the corresponding vertices adjacent to $e$.
By adding back the term $\lim_k H(L^e_{k,0})$, we have
\begin{align*}
\lim_{k \to \infty} \left(\sum_{i=0}^{h_{e,k}} H(L^e_{k,i}) -\sum_{i=1}^{h_{e,k}}(k+1) \int_{A^e_{k,i}} H\right)
=&\frac{1}{2} (H_G(\partial^1 e)+H_G( \partial^0 e))
\end{align*}
This completes the calculation over a single edge $e$. By summing over all $e \in E$, we get  \eqref{eq:est}.

Equation \eqref{eq:collapsing2} follows from the fact that $|T_{3,i}|=\val(v_i)-1$ and the fact that $L_{k,j}$, for $j \in T_{3,i}$,  is approaching to $R^{-1}(v_i)$ as $k$ goes to infinity.
Therefore, it suffices to verify Equation \eqref{eq:collapsing1}.

Recall that the oscillation of $H$ over type $T_{2,i}$ circles is bounded above by $\frac{1}{k^2}$.
It implies that 
\begin{align*}
\sum_{j \in T_{2,i}} H(L_{k,j}) \subset \left[S_{k,i}\left(H_G(v_i)-\frac{1}{k^2}\right), S_{k,i}\left(H_G(v_i)+\frac{1}{k^2}\right)\right]
\end{align*}
On the other hand, recall that $\mu(v_i)=m_i \ge \frac{S_{k,i}}{k+1}$.
Therefore, we can find an open disc $D_{k,i} \subset R^{-1}(J_{i})$ such that $\omega(D_{k,i})=\frac{S_{k,i}}{k+1}$.
It implies that 
\begin{align}
\lim_{k \to \infty} \left(\left( \sum_{j \in T_{2,i}} H(L_{k,j}) \right)- (k+1) \int_{D_{k,i}} H \right)=\lim_{k \to \infty} [-\frac{1}{k^2}S_{k,i},\frac{1}{k^2}S_{k,i}]=0  \text{ for all }i  \label{eq:break1}
\end{align}
Since $\omega(R^{-1}(V_i)\setminus D_{k,i})=\omega(R^{-1}(V_i))-\omega(D_{k,i})=\frac{S_{k,i}+\val(v_i)}{k+1}-\frac{S_{k,i}}{k+1}=\frac{\val(v_i)}{k+1}$, we also have 
\begin{align}
\lim_{k \to \infty}  (k+1) \int_{R^{-1}(V_i)\setminus D_{k,i}} H=\lim_{k \to \infty} (k+1)\omega(R^{-1}(V_i)\setminus D_{k,i})H_G(v_i)=\val(v_i)H_G(v_i)  \text{ for all }i  \label{eq:break2}
\end{align}
Equation \eqref{eq:collapsing1} now follows from Equations \eqref{eq:break1} and \eqref{eq:break2}.
\end{proof}

\begin{proof}[Proof of Theorem \ref{t:ruelle}]
We embed $\bD^2$ into the northern hemisphere of $\S^2$.
By Proposition \ref{p:ruelle} and \eqref{eq:RuelleFormula}, it suffices to show that $\sum_{i=1}^t \chi_i H_G(v_i)$ coincides with $\int_{\bR} n_H(\xi) d\xi$.
We can reinterpret $\int_{\bR} n_H(\xi) d\xi$ using $G$ as follows.
Let $v_1$ be the vertex of $G$ given by $R(\partial \bD^2)$ (it is also the $R$-image of the entire southern hemisphere).
Let $e$ be an edge of $G$.
Let the two vertices adjacent to $e$ be $\partial^+ e$ and $\partial^- e$. Since $G$ is a tree, there is no ambiguity to require that $\partial^+ e$ is further away from $v_1$
than $\partial^- e$ (we allow that $\partial^- e=v_1$).
If $H_G(\partial^+ e) > H_G(\partial^- e)$, we define $n_e=1$.
If $H_G(\partial^+ e) < H_G(\partial^- e)$, we define $n_e=-1$.
It is clear from the definition of $n_H$ that
\begin{align*}
\int_{\bR} n_H(\xi) d\xi=\sum_{e} \int_{H_G(e)} n_e= \sum_e (H_G(\partial^+ e)-H_G(\partial^- e)) \label{eq:anotherSum}
\end{align*}
where the sum is over all edges of $G$.
For any vertex $v$ of $G$ other than $v_1$, there is a unique edge $e_v$ of $G$ such that $\partial^+ e_v=v$ because $G$ is a tree.
Therefore, we have 
\begin{align*}
\sum_e (H_G(\partial^+ e)-H_G(\partial^- e))
=&-\val(v_1)H_G(v_1)+\sum_{i=2}^t \left( H_G(v_i)-(\val(v_i)-1)H_G(v_i) \right)\\
=&\sum_{i=1}^t \chi_i H_G(v_i)
\end{align*}
where the last equality uses that $H_G(v_1)=0$ and $\chi_i=2-\val(v_i)$.
It completes the proof.
\end{proof}

\begin{rem}
For higher genus surfaces, one can use a similar method to estimate $c_{\ul{L}}(H)$ for appropriate monotone links $\ul{L}$ most of whose components are level sets of $H$.
However, the homogenized spectral invariant $\mu_{\ul{L}}$ will depend on the particular link $\ul{L}$, and not only on the number of components of $\ul{L}$.
Therefore, for any fixed sequence of monotone links $\{\ul{L}_k\}_{k \in \mathbb{N}}$, this method should not give a robust estimate of the subleading asymptote of $\mu_{\ul{L}_k} $  for all autonomous Hamiltonians $H$ simultaneously.
\end{rem}



\section{Non-simplicity for kernel of Calabi and for Hameo}\label{s:non-simple}

In this section, we prove Theorem \ref{thm:notsimple} whose statement we recall here.  

\begin{thm*}[Theorem \ref{thm:notsimple}]
The following groups are not perfect:

\begin{enumerate}
\item The kernel of Calabi on $\mathrm{Hameo}(\mathbb{D}^2,\omega)$.
\item The group $\mathrm{Hameo}(\mathbb{S}^2,\omega)$.
\end{enumerate}

Both admit surjective group homomorphisms to $\mathbb{R}$.

\end{thm*}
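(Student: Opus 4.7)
The strategy is to construct, using subleading asymptotics, a proper nontrivial normal subgroup of each target group. For the disc, define
\[ N_{\D} := \{\psi \in \Hameo(\D^2,\omega) \cap \ker(\Cal) : \textstyle\sup_k k\,|f_k(\psi)| < \infty\}, \]
and analogously $N_{\S} \subset \Hameo(\S^2,\omega)$ using $k|\mu_k(\psi)|$. Lemma \ref{lem:defect} gives $f_k$ defect $2/k$, so the sequence $(kf_k)_k$ has \emph{uniform} defect bound $2$; this forces $N_\D$ to be closed under products and inverses. Conjugation invariance of the homogeneous quasimorphisms $f_k$ then makes $N_\D$ normal in the whole of $\Homeo_c(\D^2,\omega)$. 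Theorem \ref{t:O(1)} supplies nontriviality: every smooth $\psi \in \Diff_c(\D^2,\omega) \cap \ker(\Cal)$ lies in $N_\D$.

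Proving the theorem thus reduces to exhibiting a hameomorphism \emph{not} in $N$. The plan is to build a radial ``infinite Ruelle'' hameomorphism on $\D^2$ (the sphere case is handled by transplanting into a hemisphere and using $\mu_k$ in place of $f_k$, so that the Calabi-control step is unnecessary). Take disjoint annuli $A_n \subset \D^2$ of areas $\alpha_n$, and let $H_n$ be an autonomous radial bump supported in $A_n$ with interior critical value $M_n$. By \eqref{eq:RuelleFormula}, $\Ru(\phi^1_{H_1 + \cdots + H_N}) = 2\sum_{n \le N} M_n$. The three constraints to impose are: $|M_n| \to 0$ (so the truncations $H_{(N)} := H_1+\cdots+H_N$ are Hofer-Cauchy with continuous limit $H_\infty$, giving a hameomorphism $\psi := \phi^1_{H_\infty}$); an appropriate normalization making the total Calabi vanish; and unboundedness of the partial sums $\sum_{n \le N} M_n$ (producing divergent Ruelle along a subsequence of $N$). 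With alternating signs and a carefully chosen ordering these conditions are jointly realizable---partial sums can, for example, be made to grow like $\log N$ while $\sum_n M_n \alpha_n = 0$.

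The main technical obstacle is showing $\psi \notin N_\D$, i.e.\ that $(kf_k(\psi))_k$ is unbounded. For each $N$, Theorem \ref{t:O(1)} gives $\lim_k k f_k(\phi^1_{H_{(N)}}) = -\tfrac12\Ru(\phi^1_{H_{(N)}})$ (modulo a controlled Calabi term), which by construction tends to $-\infty$ along a subsequence of $N$. The subtlety is that the rate of this convergence in $k$ could a priori depend badly on $N$, so one cannot simply commute limits. I see two ways around this. The inductive route: build the $H_n$ one at a time, choosing at step $N$ a scale $k_N$ large enough that $k_N f_{k_N}(\phi^1_{H_{(N)}})$ sits within a constant of its asymptote, then selecting the remaining $H_n$ with total Hofer norm small enough that Hofer-Lipschitz continuity of $f_k$ forces $k_N\,|f_{k_N}(\psi) - f_{k_N}(\phi^1_{H_{(N)}})| = O(1)$. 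The direct route: by $C^0$-continuity of $f_k$ (Theorem \ref{th:C0continuity-fk}), the Lagrangian-control formula \eqref{eq:lag-control-fk} extends from smooth to continuous radial Hamiltonians, yielding
\[ k f_k(\psi) \;=\; \sum_{i=1}^k H_\infty(z_i), \qquad z_i := -1 + \tfrac{2i}{k+1}, \]
and by arranging the bump positions relative to these lattices along a suitable subsequence of $k$, one forces the sum to be unbounded despite the Calabi normalization $\sum M_n \alpha_n = 0$. In either approach, the output is a subsequence along which $k_N f_{k_N}(\psi) \to -\infty$, showing $\psi \notin N_\D$ and completing both statements of the theorem.
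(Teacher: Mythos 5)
Your high-level strategy---define a normal subgroup via bounded subleading asymptotics of the link spectral invariants, show it contains all smooth elements by Theorem \ref{t:O(1)}, and then exhibit a hameomorphism outside it---is the same as the paper's, and your definition of $N_\D$ matches the paper's $N(\D^2)$ essentially verbatim. But there are two substantive issues, one of which is a genuine gap.

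The sphere case is broken as stated. You propose defining $N_\S \subset \Hameo(\S^2,\omega)$ via boundedness of $k|\mu_k(\psi)|$, but $\mu_k$ is not defined on hameomorphisms: it is a quasimorphism on $\Diff(\S^2,\omega)$ only, and (unlike $f_k$) it is not $C^0$-continuous, so it does not extend to $\Homeo(\S^2,\omega)$ or to $\Hameo(\S^2,\omega)$. You cannot sidestep this by ``using $\mu_k$ in place of $f_k$''; and you also cannot use $f_k$ directly, because on the sphere $f_k = \mu_k - \mu_1$ has defect $\tfrac{2}{k} + 2$ (Remark \ref{rem:difference-of-qm}), so $k f_k$ has unbounded defect and the closure-under-products argument fails. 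The paper's workaround---and it is essential---is to replace $f_k$ by $g_k := \mu_{2^k-1} - \mu_{2^{k-1}-1}$, which is $C^0$-continuous (being a difference of $\mu$'s) and has defect $\lesssim 2^{1-k}$, so that $(2^k - 1)g_k$ has \emph{uniformly} bounded defect. Without this, you do not get a subgroup of $\Hameo(\S^2,\omega)$ at all. This part of your argument needs to be redone.

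For the disc, your ``bad element'' is genuinely different from the paper's. The paper uses a single radial twist $T = \phi^1_H$ with $h(z) = \sqrt{2/(1+z)}$ blowing up at $z = -1$: the twist is unbounded, so one needs Sikorav's averaging trick (Lemma \ref{lem:sikorav-trick}) to show the approximating Hamiltonians are Hofer--Cauchy and hence $T \in \Hameo$. You instead use an infinite sum of uniformly-bounded, disjointly-supported radial bumps with $|M_n| \to 0$; then the Hofer norm of any tail is $\le 2\max_{n > N}|M_n| \to 0$, so Hofer--Cauchyness is immediate and Sikorav is not needed---this is a real simplification if it works. But the verification that $k f_k(\psi)$ is unbounded is left vague. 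Your formula $\Ru(\phi^1_{H_1+\cdots+H_N}) = 2\sum M_n$ is off: by \eqref{eq:RuelleFormula}, a bump on an \emph{annulus} has Ruelle invariant $0$ (the inner and outer level circles carry opposite signs and cancel), so the bumps would need to sit on disjoint discs, in which case $\Ru = \sum M_n$ (no factor of $2$). Moreover, requiring both $\sum M_n \alpha_n = 0$ \emph{and} unbounded partial sums $\sum_{n\le N} M_n$ simultaneously, with $|M_n|\to 0$, is over-constrained and unnecessary: as in the paper's Lemma \ref{lem:N-proper-disc}, one can take any hameomorphism with unbounded $k(f_k - \Cal)$ and post-compose with a smooth diffeomorphism of the same Calabi invariant, since Theorem \ref{t:O(1)} guarantees the smooth correction contributes only $O(1)$. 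Finally, the ``direct route'' via \eqref{eq:lag-control-fk}---positioning bumps at dyadic latitudes so that the Riemann sums $\sum_i H_\infty(z_i)$ diverge along $k = 2^j - 1$---is plausible and can likely be made to work, but you give only a sketch, and it is considerably more delicate than the paper's explicit convexity estimate on the single function $h$. In short: same framework, a different and potentially simpler construction on the disc (worth pursuing, though unfinished), but a genuine error on the sphere.
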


The goal of this section is to explain the proof.  The broad strategy of the proof is as follows. Let $G$ denote either $\mathrm{Homeo}_{\mathrm{c}}(\mathbb{D}^2,\omega)$ or $\mathrm{Homeo}(\mathbb{S}^2,\omega)$ and let $H$ be any of the two groups in the statement of Theorem \ref{thm:notsimple}.  We begin by defining certain normal subgroups of $H$, denoted by $N(\mathbb{D}^2)$ and $N(\mathbb{S}^2)$ respectively, which will turn out to be proper.   Now, these groups are also normal in $G$ and since any normal subgroup of $G$ contains $[G,G]$ (see \cite[Prop.\ 2.2]{CGHS20}\footnote{\label{footnote:[G,G]}Proposition 2.2 in \cite{CGHS20} is only stated on the disc, but holds on any compact surface by the same argument.}), we conclude that $H$ is not perfect.  The calculation of a quotient isomorphic to 
$\mathbb{R}$ proceeds readily from this, as we will explain.

In the rest of this section we define the normal subgroups $N(\mathbb{D}^2), N(\mathbb{S}^2)$  and prove their properness.

\subsection*{Proper normal subgroups from subleading asymptotics}
To define our normal subgroups, we will use the subleading asymptotics of the quasimorphims  arising from link spectral invariants which were introduced in Section \ref{sec:prelim-quasis}.


First, consider the case of the disc. Denote by $\mathrm{ker(Cal)}$ the kernel of the Calabi homomorphism $\mathrm{Cal} : \mathrm{Hameo}(\mathbb D^2, \omega) \rightarrow \mathbb R.$  Recall the quasimorphism $f_k :\mathrm{Homeo}(\mathbb S^2, \omega) \rightarrow \R$; its restriction to $\mathrm{Homeo}_{\mathrm{c}}(\mathbb D^2, \omega)$ has defect  bounded by $\frac{2}{k}$, see Lemma \ref{l:quasi2k}.  Our normal subgroup will consist of those elements (of the kernel of Cal) for which the $f_k$ have bounded subleading asymptotics. More precisely, define 
$$N(\mathbb{D}^2) := \{ \psi \in \mathrm{ker(Cal)}: \text{the sequence }  \vert kf_k(\psi) \vert \text{ is bounded} \}.$$

\begin{prop}\label{prop:N-disc}
$N(\mathbb{D}^2)$ is a normal subgroup of $\mathrm{ker(Cal)}$ which contains all of its smooth elements. Moreover, it is also normal in $\mathrm{Homeo}_{\mathrm{c}}(\mathbb D^2, \omega)$.
\end{prop}
\begin{proof}
The argument here is very similar to that of the proof of Theorem \ref{t:muO(1)} and so we will not provide all the details.  $N(\mathbb{D}^2)$ is a subgroup because the $f_k$ have defect $\frac{2}{k}$  and it is normal in $\mathrm{Homeo}_{\mathrm{c}}(\mathbb D^2, \omega)$ because $\mathrm{ker(Cal)}$ is a normal subgroup and because the $f_k$, being homogeneous quasimorphisms, are invariant under conjugation.

The fact that $N(\mathbb{D}^2)$ contains all of the smooth elements in the kernel of Calabi is a consequence of Corollary \ref{corol:O(1)}; this is because for such $\psi$, we have $\mu_1(\psi) = \mathrm{Cal}(\psi) = 0$.   
\end{proof}

In the case of the sphere, our normal subgroup is defined similarly, however we cannot use the quasimorphisms $f_k : \mathrm{Homeo}(\mathbb S^2, \omega) \rightarrow \mathbb R$ because although the restriction $f_k : \mathrm{Homeo}_{\mathrm{c}}(\mathbb D^2, \omega) \rightarrow \mathbb R$ has defect $\frac{2}{k}$, the $f_k$ have defect $\frac{2}{k} + 2$; see Remark \ref{rem:difference-of-qm}.
We remedy this problem by working instead with the sequence of quasimorphisms
\begin{equation}\label{eqn:gk}
g_k := \mu_{2^k - 1} - \mu_{2^{k-1} - 1},
\end{equation}   
for $k\geq 2$  on $\Homeo(\S^2,\omega)$.  Then, the defect of $g_k$ is bounded by $\frac{2}{2^k - 1}+\frac{2}{2^{k-1} - 1}$, ; see Remark \ref{rem:difference-of-qm},  which in particular converges to $0$ as $k$ goes to infinity.  (While many other differences $\mu_{\alpha(k)} - \mu_{\alpha(k-1)}$ would also have defect limiting to 0, the choice $\alpha(k) = 2^k - 1$ will be particularly convenient for our calculations in Section 4.3.)

Define 
$$N(\mathbb{S}^2) := \{ \psi \in \mathrm{Hameo}(\mathbb{S}^2, \omega) :  \text{the sequence }  \vert (2^k - 1) g_k(\psi) \vert \text{ is bounded}  \}.$$

\begin{prop}\label{prop:N-sphere}
$N(\mathbb{S}^2)$ is a normal subgroup of $\mathrm{Hameo}(\mathbb S^2, \omega)$ which contains all of its smooth elements. Moreover, it is also normal in $\mathrm{Homeo}(\mathbb S^2, \omega)$.
\end{prop}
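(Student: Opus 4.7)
My plan is to mirror the proof of Proposition \ref{prop:N-disc} almost verbatim, using the quasimorphism/defect properties of $g_k$ together with Theorem \ref{t:muO(1)}. There are three things to verify: that $N(\S^2)$ is closed under multiplication and inversion, that it is conjugation invariant, and that it contains the smooth elements. The one new subtlety compared to the disc case is that the construction of $g_k$ must be compatible with the ``scaled defect'' argument. Precisely, the defect of $g_k$ is bounded by $\tfrac{2}{2^k-1}+\tfrac{2}{2^{k-1}-1}$, and multiplying by $2^k-1$ produces a uniformly bounded quantity only because the ratio $(2^k-1)/(2^{k-1}-1)\to 2$. This is why the geometric doubling of the indices in \eqref{eqn:gk} is essential; the naive choice $f_k=\mu_k-\mu_1$ on $\S^2$, with defect $\tfrac{2}{k}+2$, would yield a scaled defect $(k)\cdot(\tfrac{2}{k}+2)$ growing linearly and the argument would collapse.

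For the subgroup property, given $\psi_0,\psi_1\in N(\S^2)$, the quasimorphism estimate for $g_k$ immediately gives
\begin{equation*}
\bigl|(2^k-1)g_k(\psi_0\psi_1)\bigr|\leq \bigl|(2^k-1)g_k(\psi_0)\bigr|+\bigl|(2^k-1)g_k(\psi_1)\bigr|+2+\frac{2(2^k-1)}{2^{k-1}-1},
\end{equation*}
which is uniformly bounded in $k$, so $\psi_0\psi_1\in N(\S^2)$. Closure under inversion is automatic: since each $\mu_k$ is a homogeneous quasimorphism (Theorem \ref{thm:mu-k}), so is the difference $g_k$, and therefore $g_k(\psi^{-1})=-g_k(\psi)$. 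Normality then follows from the standard fact that homogeneous quasimorphisms on a group are invariant under conjugation.

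For containment of smooth elements, recall that every element of $\mathrm{Diff}(\S^2,\omega)=\mathrm{Ham}(\S^2,\omega)\subset \Hameo(\S^2,\omega)$ is smooth in the relevant sense. By Theorem \ref{t:muO(1)}, for any $\psi\in \mathrm{Diff}(\S^2,\omega)$ the sequences $\{(2^k-1)\mu_{2^k-1}(\psi)\}$ and $\{(2^{k-1}-1)\mu_{2^{k-1}-1}(\psi)\}$ are both uniformly bounded. Rewriting
\begin{equation*}
(2^k-1)g_k(\psi)=(2^k-1)\mu_{2^k-1}(\psi)-\frac{2^k-1}{2^{k-1}-1}\cdot (2^{k-1}-1)\mu_{2^{k-1}-1}(\psi),
\end{equation*}
and using that the prefactor ratio is bounded, one concludes $\psi\in N(\S^2)$.

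The main obstacle is really the conceptual one isolated in the first paragraph: identifying the correct indexing scheme so that the scaled defect remains $O(1)$. Once the definition of $g_k$ is chosen with this in mind, the rest of the proof is a routine adaptation of Proposition \ref{prop:N-disc}, and no genuinely new input beyond Theorem \ref{t:muO(1)} is required.
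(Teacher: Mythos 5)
Your proof is correct and takes essentially the same approach the paper indicates (it omits the proof, describing it as parallel to Theorem~\ref{t:muO(1)} and Proposition~\ref{prop:N-disc}). You correctly isolate the key point — that the geometric index spacing makes $(2^k-1)\cdot D(g_k)$ bounded, since $(2^k-1)/(2^{k-1}-1)\to 2$ — and the rest (homogeneity giving closure under inversion and conjugation invariance, Theorem~\ref{t:muO(1)} applied to the two index sequences giving containment of $\Diff(\S^2,\omega)$) is filled in exactly as the paper intends.
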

  As in the case of $N(\mathbb{D}^2)$, the proof of the above is similar to that of Theorem \ref{t:muO(1)} and so we will omit it. 
  
  \medskip

To prove properness of these normal subgroups, we will exhibit examples of hameomorphisms with unbounded subleading asymptotics.

\subsection{A quickly twisting hameomorphism}

The first part of the proof is to find a useful element that is in Hameo.  As in our previous work, \cite{CGHS20, CGHS21, CGHMSS21}, the desired map will be a twist map.  However, in our previous work, we studied ``infinite twists" that were twisting so quickly that they were not in Hameo.  Here, we find a map that is twisting slowly enough to define an element of Hameo, but quickly enough to have interesting, i.e.\ unbounded, subleading asymptotics.  The construction of this map will be the topic of this section.

\medskip

 Let $T: \mathbb S^2 \to \mathbb S^2$ be defined as follows.  
 
We view $\mathbb S^2$ as the standard unit sphere $\{(x,y,z) \in \mathbb R^3: x^2 + y^2 + z^2 = 1 \}$ in $\R^3$ and equip it with the symplectic form $\omega =  \frac{1}{4\pi} d\theta \wedge dz $ where $(\theta, z)$ are the standard cylindrical coordinates on $\mathbb{R}^3$. Denote by $p_-$ the  point on $\mathbb S^2$ whose $z$--coordinate is $-1$.  We pick a function $H: \mathbb{S}^2 \setminus \{p_-\} \rightarrow\R$ which is of the form  

 $$H(\theta, z) =  h(z),$$
where $h: (-1,1]  \rightarrow \mathbb R$ is a smooth function which vanishes for $z\geq - \frac{1}{2}$ and, for $z \leq - \frac{3}{4}$, satisfies the identity 

\begin{equation}
 \label{eqn:hamT} 
 h(z) = \sqrt{\frac{2}{1+z}}.
 \end{equation}

 The function $H$ induces a well-defined flow $\phi^t_H$ on $\mathbb S^2$ which fixes the point $p_-$ and its action on $(\theta, z)$, with $z>-1$, is given by the following equation
  \begin{equation*}
 \phi^t_H(\theta, z) = (\theta + 4\pi h'(z)t, z).
 \end{equation*}
  We define $$T := \phi^1_H.$$
 
 Note that $T$ is supported in the disc $\mathbb{D}^2 := \{(\theta, z) : -1 \leq z \leq 0 \} \subset \mathbb{S}^2$ and so we can view it as an element of either of $\mathrm{Homeo}_{\mathrm{c}}(\mathbb D^2, \omega)$ or $\mathrm{Homeo}(\mathbb S^2, \omega)$.

\begin{prop}\label{prop:T-hameo}
$T \in \mathrm{Hameo}(\mathbb{D}^2,\omega)$.  Moreover, $$\mathrm{Cal}(T) = \frac{1}{2} \int_{-1}^1 h(z) dz < \infty.$$
\end{prop}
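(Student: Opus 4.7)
I would prove the proposition by constructing an explicit approximating sequence $\{H_n\}_{n\geq 1}$ of smooth, compactly-supported Hamiltonians on $\S^2$ (supported inside $\D^2$) witnessing the conditions of Definition~\ref{def:hameo}, then obtaining the Calabi value from the extension formula of Theorem~\ref{th:extension-calabi}.

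\textbf{Setup and convergence of isotopies.} Fix $\delta_n\searrow 0$ and set $D_n:=\{z\leq -1+\delta_n\}$. As a guide, consider the naive time-independent choice $H_n(\theta,z)=h_n(z)$ with $h_n=h$ outside $D_n$ and $h_n$ smoothly extended on $D_n$. Both $H_n$ and $H$ are radial, so horizontal circles are invariant under each flow; on $\S^2\setminus D_n$ the Hamiltonians agree, hence $\phi_{H_n}^t=\phi_H^t$ for every $t$, while on the invariant disc $D_n$ the two flows differ by at most $\mathrm{diam}(D_n)\to 0$, uniformly in $t$. This shows that for any (radial, possibly time-dependent) $H_n$ that coincides with $H$ outside $D_n$ for all $t$, condition (i) of Definition~\ref{def:hameo} is automatic with limit isotopy $\phi_H^t$.

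\textbf{The Hofer-Cauchy condition.} This is the delicate point and main obstacle. Any choice of $h_n$ that matches $h$ outside $D_n$ has spatial oscillation at least $\sup_{\partial D_n}|h|=\sqrt{2/\delta_n}\to\infty$, so individual autonomous Hofer norms blow up; and indeed a constant extension gives $\|H_n-H_m\|_{\mathrm{Hofer}}=|\sqrt{2/\delta_n}-\sqrt{2/\delta_m}|$, which is not Cauchy. The plan is to allow $H_n$ to be time-dependent and concentrate the large values of $H_n$ near $p_-$ on short time intervals $[1-\tau_n,1]$ of length $\tau_n\searrow 0$, trading peak height against duration. The key quantitative input is that the singularity of $h$ is the borderline $h(z)\sim(1+z)^{-1/2}$: the total integral $\int_{-1}^0 h(z)\,dz$ converges, but just barely. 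With a careful choice of time reparameterization matched against the singular exponent, the differences $H_n-H_m$ are supported in nested annuli near $p_-$ with vanishing space-time weight, and their Hofer norms tend to zero. Keeping uniform $C^0$-continuity of the limit isotopy $\psi^t$ is a delicate additional constraint: a naive ``modular reduction'' $h'\pmod{1/2}$ gives a uniformly Hofer-bounded sequence, but its limit isotopy is discontinuous at levels where $h'$ crosses half-integers, and therefore fails condition (i).

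\textbf{Calabi value.} By Theorem~\ref{th:extension-calabi}, $\Cal(T)=\lim_n\int_0^1\int_{\D^2}H_n(t,\cdot)\,\omega\,dt$. With $\omega=\tfrac{1}{4\pi}d\theta\wedge dz$ and integration in $\theta$ first, this equals $\tfrac12\int_{-1}^0\bar h_n(z)\,dz$, where $\bar h_n(z):=\int_0^1 h_n(t,z)\,dt$ is the time-averaged radial profile. By design $\bar h_n=h$ outside $D_n$, while inside $D_n$ the average remains bounded and $\mathrm{area}(D_n)\to 0$; thus $\int_{-1}^0\bar h_n(z)\,dz\to\int_{-1}^0 h(z)\,dz$. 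This limit is finite because $h(z)\sim\sqrt{2/(1+z)}$ is improperly integrable near $-1$, and equals $\int_{-1}^1 h(z)\,dz$ since $h$ vanishes on $[-1/2,1]$. Hence $\Cal(T)=\tfrac12\int_{-1}^1 h(z)\,dz<\infty$.
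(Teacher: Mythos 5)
Your overall scaffolding is right: reduce to constructing a sequence of smooth compactly supported Hamiltonians $H_n$ (or more precisely $K_n$) realising the conditions of Definition~\ref{def:hameo}, and then read off $\Cal(T)$ from Theorem~\ref{th:extension-calabi}. You also correctly isolate the Hofer--Cauchy condition as the crux and observe that the naive radial smoothings $h_n$ of $h$ cannot be Cauchy. The Calabi computation at the end is essentially the paper's.

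The gap is in your proposed fix. Concentrating the large values of $H_n$ near $p_-$ on short time intervals cannot reduce the Hofer norm of the differences, because the norm
\[
\|K\| \;=\; \int_0^1 \bigl(\max_x K_t(x) - \min_x K_t(x)\bigr)\,dt
\]
is \emph{invariant under temporal reparameterisation}: squeezing the support of $K$ in $t$ by a factor $\tau$ while keeping the same time-one flow forces an amplification of the amplitude by $1/\tau$, and the two effects cancel exactly in $\|\cdot\|$. The same objection applies to the more general ``trade peak height against duration'' idea: if $H_n-H_m$ is supported in a tiny spatial disc $D_m$ and has to generate a fixed nontrivial flow there, making its time support short proportionally inflates its magnitude, and $\|H_n - H_m\|$ is unchanged. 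So there is no ``vanishing space-time weight'': what you need is a \emph{spatial}, not temporal, redistribution.

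The mechanism the paper actually uses is Lemma~\ref{lem:sikorav-trick}, a Sikorav-type displacement/averaging argument: if $F$ is supported in a disc $D$ inside a disc $\Delta$ of area $A$ with $\mathrm{Area}(D) < A/N$, then $d_H(\phi^1_F,\mathrm{Id})\le \|F\|/N + 2A$, proved by averaging $F$ over $N$ disjoint translates of $D$ inside $\Delta$ and controlling the error by the displacement energies. Applying this to $F = H_{n+1}-H_n$ (supported in $D_n$ with $\mathrm{Area}(D_n)\approx 2^{-2n}$ and $\|H_{n+1}-H_n\|\approx 2^n$), and choosing $A = A_n = 2^{-n/2}$ and $N = 2^{\lfloor 3n/2\rfloor}$, gives replacement Hamiltonians $G_n$ supported in $\Delta_n$ with $\phi^1_{G_n}=\phi^1_{H_{n+1}-H_n}$ and $\sum_n\|G_n\|<\infty$. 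Setting $K_1 = H_1$ and $K_{n+1} = K_n\sharp G_n$ then produces a sequence with $\phi^1_{K_n}=\phi^1_{H_n}$ which is Hofer--Cauchy; and because $G_n$ is supported in $\Delta_n$ of rapidly shrinking diameter, the isotopies $\phi^t_{K_n}$ are also uniformly $C^0$-Cauchy, giving condition~(i). Your observation that the borderline singular exponent $h(z)\sim (1+z)^{-1/2}$ is what makes the argument close is correct, but it enters through this balancing of $\|F\|/N$ against $A$ in Sikorav's trick, not through time reparameterisation.
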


Note that the above proposition implies that $T \in \mathrm{Hameo}(\mathbb S^2,\omega)$ as well.  

\begin{rem} \label{rem:fast-vs-toofast}
As mentioned earlier, the homeomorphism $T$ twists slowly enough to be in $\Hameo$, and so its Calabi invariant is well-defined, yet it twists fast enough  not to be contained in $N(\mathbb D^2)$; the heuristic reasoning behind $T\notin N(\mathbb D^2)$ is that, since $H(p_-) = \infty$, $T$ has ``infinite Ruelle invariant." 

 In comparison, if we were to modify the function $h$ in Equation \eqref{eqn:hamT} to $$h(z) =\frac{2}{1+z}$$
we would obtain an ``infinite twist" homeomorphism that spins too fast to be contained in $\Hameo$; here  the heuristic reasoning is that the condition $\int_{-1}^1 h(z) dz = \infty$ forces the homeomorphism to have ``infinite Calabi invariant."  Indeed, this can be proven rigorously via the argument given in \cite{CGHS20} (see also the proof of \cite[Theorem 1.3]{CGHMSS21}).
\end{rem}

\begin{proof}[Proof of Proposition \ref{prop:T-hameo}]
By definition of $\mathrm{Hameo}$, to prove that  $T \in \mathrm{Hameo}(\mathbb{D}^2,\omega)$, we must find smooth Hamiltonians $K_n$ supported in a compact subset of the interior of $\mathbb{D}^2 = \{(\theta, z) : -1 \leq z \leq 0 \}$ such that
\begin{enumerate}[(A)]
\item $\phi^1_{K_n} \xrightarrow{C^0} T$,
\item $\phi^t_{K_n}$ is Cauchy for the $C^0$-distance, uniformly in $t\in[0,1]$,
\item the sequence $K_n$ is Cauchy for Hofer's norm $\|\cdot\|$.
\end{enumerate}

We start by picking Hamiltonians $H_n$ as follows.  Let $D_n := \{ (\theta, z): -1  \leq z \leq -1 + \varepsilon_n \} \subset \mathbb{D}^2$ be the disc of radius $\varepsilon_n := \frac{1}{2^{2n}}$ (in $z$ coordinate)  centered at $p_-$; note that 
\begin{equation}\label{eqn:area_Dn}
\mathrm{Area}(D_n) = \varepsilon_n \mathrm{Area}(\mathbb D^2) =  \frac{\varepsilon_n}{2} .
\end{equation} 

Now, pick the Hamiltonian $H_n$ so that the following hold:
\begin{enumerate}[(i)]
\item $H_n$ depends only on the $z$ variable,
\item  $H_n = H$ outside of $D_{n}$ and $H_n \approx  \sqrt{\frac{2}{\varepsilon_n}}  $ in the interior of $D_n$, 
\item $\Vert H_{n+1} - H_n \Vert \leq \sqrt{\frac{2}{\varepsilon_n}}$.
\end{enumerate}
To see why $H_n$ can be picked to satisfy the above, note that $H( -1 + \varepsilon_n) =  \sqrt{\frac{2}{\varepsilon_n}}$ and so to obtain $H_n$ it suffices to smoothly flatten $H$ on the interior of $D_n$.

Note that $\phi^1_{H_n} \circ T^{-1} = \mathrm{Id}$ outside of $D_n$ and hence $\phi^1_{H_n} \xrightarrow{C^0} T$.  We will find Hamiltonians $K_n$ such that $\phi^1_{K_n}=\phi_{H_n}^1$, the sequence $K_n$ is Cauchy for Hofer's norm $\|\cdot\|$ and $\phi_{K_n}^t$ is Cauchy for the $C^0$-distance, uniformly in $t$.  Note that once this is proven Theorem \ref{th:extension-calabi} yields 
$$\mathrm{Cal}(T) = \lim_n \mathrm{Cal}(\phi^1_{K_n}) =   \lim_n \mathrm{Cal}(\phi^1_{H_n})= \lim_n \int_{\mathbb D^2} H_n \, \omega = \int_{\mathbb D^2} H \; \omega =   \frac{1}{2} \int_{-1}^1 h(z) dz.$$

We need the following lemma whose proof relies on ideas going back to Sikorav \cite{Sikorav-Pisa}.

\begin{lem}\label{lem:sikorav-trick} Let $\Delta$ be a Euclidean 2-disc equipped with an area form $\omega$ of total area $A$. Suppose $D\subset \Delta$ is diffeomorphic to $\D^2$ and that $\mathrm{Area}(D) < \frac{A}{N}$ for some integer $N>0$.  Let $F$ be a smooth Hamiltonian supported in the interior of $D$. Then,  we have
\begin{align*}
d_H(\phi^1_F, \mathrm{Id}) \leq \frac{\Vert F \Vert}{N} + 2A.
\end{align*} 
 where $d_H$ denotes the Hofer distance on $\mathrm{Ham}_{\mathrm{c}}(\Delta, \omega)$  and $\|F\|=\int_0^1(\max_{\Delta}F_t-\min_{\Delta}F_t)dt$  is the Hofer norm of $F$.
\end{lem}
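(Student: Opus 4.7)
The plan is a version of Sikorav's displacement trick. Since $\mathrm{Area}(D) < A/N$, one can pack $N$ pairwise disjoint embedded copies $D_1 = D, D_2, \dots, D_N$ of $D$ inside $\Delta$. With $\psi_1 := \id$, choose for each $i \geq 2$ a compactly supported Hamiltonian diffeomorphism $\psi_i$ of $\Delta$ with $\psi_i(D) = D_i$, and arrange that $\sum_{i=2}^N d_H(\psi_i, \id) \leq A$. A concrete construction: place $D_2, \dots, D_N$ in a ``flower pattern'' of petal regions around $D$, each petal having area slightly greater than $2A/N$ and containing $D$ and the respective $D_i$ at opposite ends. Each $\psi_i$ can then be taken as a (roughly) $180^\circ$ rotation of its petal, supported in that petal, with $d_H(\psi_i,\id) \leq A/N + \epsilon$. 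The petals are disjoint outside $D$, so their union fits inside $\Delta$, and the sum over $i = 2,\dots,N$ is at most $(N-1)A/N + o(1) < A$.

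Assume first that $F$ is autonomous. Set $\alpha := \phi_{F/N}^1 = \phi_F^{1/N}$, so that $\phi_F^1 = \alpha^N$ and $d_H(\alpha, \id) \leq \|F\|/N$. Define
\[
\beta_i := \psi_i \alpha \psi_i^{-1}, \qquad \gamma_i := [\psi_i, \alpha^{-1}] = \psi_i \alpha^{-1} \psi_i^{-1} \alpha.
\]
The $\beta_i$ are supported in the pairwise disjoint discs $D_i$ and hence commute, and their product equals $\phi^1_{\tilde F}$ for the autonomous Hamiltonian $\tilde F := \frac{1}{N}\sum_{i=1}^N F \circ \psi_i^{-1}$; since its summands have disjoint supports, $\|\tilde F\| = \|F\|/N$.

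The crux of the argument is the algebraic identity
\[
\alpha^N \;=\; \Bigl(\prod_{i=1}^N \beta_i\Bigr) \cdot \prod_{i=2}^N \gamma_i.
\]
To verify it, note that for $i \geq 2$ the element $\beta_i^{-1} = \psi_i \alpha^{-1} \psi_i^{-1}$ is supported in $D_i$ and hence commutes with $\alpha$; therefore $\gamma_i = \alpha \beta_i^{-1}$, the $\gamma_i$ pairwise commute, and $\prod_{i\geq 2}\gamma_i = \alpha^{N-1}\prod_{i\geq 2}\beta_i^{-1}$. Multiplying by $\prod_{i=1}^N \beta_i = \alpha\prod_{i\geq 2}\beta_i$ cancels the $\beta_i^{-1}$'s and yields $\alpha^N$. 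Combining the identity with the triangle inequality for $d_H$ and the classical commutator bound $d_H([\psi,\phi],\id) \leq 2\, d_H(\psi,\id)$ (which comes from writing $[\psi,\phi] = \psi \cdot (\phi\psi^{-1}\phi^{-1})$ and using bi-invariance of $d_H$) then gives
\[
d_H(\phi_F^1, \id) = d_H(\alpha^N, \id) \;\leq\; \|\tilde F\| + \sum_{i=2}^N d_H(\gamma_i, \id) \;\leq\; \frac{\|F\|}{N} + 2\sum_{i=2}^N d_H(\psi_i, \id) \;\leq\; \frac{\|F\|}{N} + 2A.
\]
For time-dependent $F$ the same strategy applies after some extra bookkeeping (e.g.\ by decomposing $\phi_F^1$ into time-rescaled pieces and running the argument in parallel on them); the application to Proposition~\ref{prop:T-hameo}, where the $H_n$ are autonomous, requires only the case above.

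The hardest step in this plan is the geometric construction of the $\psi_i$ achieving the total Hofer bound $\sum d_H(\psi_i,\id) \leq A$, which hinges on packing the $D_i$ efficiently and implementing each $\psi_i$ via a nearly area-optimal swap; once this geometric input is in place, the algebraic identity and the Hofer-norm estimates are a clean sequence of commutator manipulations.
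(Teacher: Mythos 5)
Your proof follows essentially the same Sikorav-trick strategy as the paper: factor $\phi_F^1 = \alpha^N$ with $\alpha = \phi_F^{1/N}$, conjugate $\alpha$ into disjoint discs $D_i$ to produce a diffeomorphism $\phi^1_{\tilde F}$ with $\|\tilde F\| = \|F\|/N$, and bound the discrepancy by bi-invariance of $d_H$. Your rewriting via the commutators $\gamma_i = [\psi_i, \alpha^{-1}]$ is a cosmetic variant of the paper's more direct comparison $d_H\bigl(\prod_i \psi_i \alpha \psi_i^{-1}, \prod_i \alpha\bigr) \le \sum_i 2\,d_H(\psi_i,\id)$; both are correct and yield the same estimate. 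The paper also restricts attention to autonomous $F$ and cites \cite{CGHS21} (Lemma 4.3, case $A=1$) for the existence of the $\psi_i$ rather than constructing them.

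The one genuine problem is the ``flower pattern'' you describe. You take $N-1$ petals, each of area slightly greater than $2A/N$, pairwise disjoint outside $D$. Writing $a = \mathrm{Area}(D)$, the union then has area
\[
a + (N-1)\bigl(\mathrm{Area}(P_i) - a\bigr) \;>\; a + (N-1)\Bigl(\tfrac{2A}{N} - a\Bigr) \;=\; \tfrac{2(N-1)}{N}A - (N-2)\,a \;>\; A,
\]
using $a < A/N$ in the last step. So the petals as specified cannot fit inside $\Delta$. The fix is to take petals of area close to $2\,\mathrm{Area}(D)$ (rather than close to $2A/N$): a $180^\circ$ rotation of a petal of area $p$ has Hofer cost $p/2$, so this still gives $d_H(\psi_i,\id) < A/N$, while the union of the petals now has area close to $N\,\mathrm{Area}(D) < A$. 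This is exactly the bound the paper asserts for each $\psi_i$, which makes $\sum_{i} 2\,d_H(\psi_i,\id) \le 2A$ immediate. You correctly identify the construction of the $\psi_i$ as the crux, but your chosen numbers are inconsistent; with the corrected petal size the rest of your argument goes through.
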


Before proving this lemma, we will use it to construct the sequence of Hamiltonians $K_n$.

For each $n$, the Hamiltonian $H_{n+1} - H_n$ is supported in the disc $D_n$, by item (ii) above, and   $\|H_{n+1} - H_n \| \leq \sqrt{\frac{2}{\varepsilon_n}}$, by (iii).
Let $\Delta_n\subset \D^2$ be the disc centered at $p_-$ and of area $A_n:=2^{-n/2}$. By Equation \eqref{eqn:area_Dn}, we have $\mathrm{Area}(D_n) < \frac{A_n}N$ for $N:=2^{\lfloor 3n/2\rfloor}$. Hence, applying Lemma \ref{lem:sikorav-trick}, we obtain Hamiltonians $G_n$ supported in $\Delta_n$ which satisfy
\begin{itemize}
\item $\phi_{G_n}^1=\phi_{H_{n+1}-H_n}^1=\phi_{H_n}^{-1}\phi_{H_{n+1}}^1$,
 \item $\|G_n\| \leq \frac{\|H_{n+1}-H_n\|}{N} + 2 A_n \leq \frac{ \sqrt{ \frac{2}{\varepsilon_n}}}{N} + 2 A_n $.
\end{itemize}

Note that $\frac{ \sqrt{ \frac{2}{\varepsilon_n}}}{N} + 2 A_n = \frac{2^n \sqrt{2}}{2^{\lfloor 3n/2\rfloor}} + \frac{2}{2^{\frac{n}{2}}}$ and thus the series $\sum_{i=1}^\infty \|G_i\|$ is summable.
Since $G_n$ is supported in $A_n$, the $C^0$-distance $d_{C^0}(\phi_{G_n}^t,\id)$ is bounded by the diameter of $A_n$, which is $O(2^{-n/4})$. It follows that the series $\sum_{i=1}^\infty d_{C^0}(\phi_{G_i}^t,\id)$ is summable as well (uniformly in $t$).

Now let us define $K_1:=H_1$ and then recursively $K_{n+1}:=K_n\sharp G_n$ for $n\geq 1$. Then, \[\phi_{K_n}^1=\phi_{H_1}^1\phi_{G_{1}}^1\cdots\phi_{G_{n-1}}^1=\phi_{H_n}^1.\] Moreover, since $\sum_{i=1}^\infty \|G_i\|=\sum_{i=1}^\infty \|K_{i+1}-K_i\|$ is summable, the sequence $K_n$ is Cauchy with respect to the Hofer norm. Similarly, since $\sum_{i=1}^\infty d_{C^0}(\phi_{G_i}^t,\id)$ is summable, $\phi_{K_n}^t$ converges for the $C^0$ topology.

 This completes the proof of Proposition \ref{prop:T-hameo} modulo the proof of the lemma which we provide below. \end{proof}
\begin{proof}[Proof of Lemma \ref{lem:sikorav-trick}] We will present the proof of the lemma under the simplifying assumption that the Hamiltonian $F$ is time independent and leave the more general case, which is very similar, to the reader.  Note that we have only applied Lemma \ref{lem:sikorav-trick} to time-independent Hamiltonians.  

Pick pairwise disjoint discs $D_1, \ldots, D_N \subset \Delta$ such that each of these discs has the same area as $D$.  There exist Hamiltonian diffeomorphisms $\psi_1, \ldots, \psi_N \in \Ham_{\mathrm{c}}(\Delta, \omega)$ such that
\begin{itemize}
\item $\psi_i(D) = D_i$ for each $i = 1, \ldots, N$,
\item $d_H(\psi_i, \mathrm{Id}) \leq \frac{A}{N}$. 
\end{itemize}

Consider the time-independent Hamiltonian $$H:= \frac{1}{N} \sum_{i=1}^N F \circ \psi_i^{-1}.$$  It is supported in the union of the discs $D_i$ and $\Vert H \Vert \leq \frac{\Vert F \Vert}{N}$.  Therefore,

$$d_H(\phi^1_F, \mathrm{Id}) \leq d_H(\phi^1_F, \phi^1_H) + d_H(\phi^1_H, \mathrm{Id} ) \leq  d_H(\phi^1_F, \phi^1_H) +  \frac{\Vert F \Vert}{N}.$$

Hence, to prove the lemma, it is sufficient to show that $d_H(\phi^1_H, \phi^1_F) \leq 2A$.  To do so, first observe that $ \phi^1_H = \prod_{i=1}^N \, \psi_i \phi^{1/N}_{F} \psi_i^{-1} $ and $\phi^1_F =\prod_{i=1}^N  \, \phi^{1/N}_{F}$. Hence,
\begin{align*}
d_H(\phi^1_H, \phi^1_F) &= d_H\left(\prod_{i=1}^N \, \psi_i \phi^{\frac{1}{N}}_{F} \psi_i^{-1},  \prod_{i=1}^N  \, \phi^{\frac{1}{N}}_{F}\right) \\
&\leq \sum_{i=1}^N d_H\left(\phi^{\frac{N-i}{N}}_F \prod_{j=1}^i \psi_j \phi^{\frac{1}{N}}_{F} \psi_j^{-1} , \phi^{\frac{N-i+1}{N}}_F \prod_{j=1}^{i-1} \psi_j \phi^{\frac{1}{N}}_{F} \psi_j^{-1}\right)\\
&= \sum_{i=1}^N d_H(\psi_i \phi^{\frac{1}{N}}_{F} \psi_i^{-1} , \phi^{\frac{1}{N}}_{F})\\
&\leq \sum_{i=1}^N d_H(\psi_i \phi^{\frac{1}{N}}_{F} \psi_i^{-1}, \phi^{\frac{1}{N}}_{F} \psi_i^{-1} ) +d_H(\phi^{\frac{1}{N}}_{F} \psi_i^{-1}, \phi^{\frac{1}{N}}_{F}) \\
&= \sum_{i=1}^N d_H(\psi_i, \mathrm{Id }) + d_H(\psi_i^{-1}, \mathrm{Id })\\
& = \sum_{i=1}^N 2 \, d_H(\psi_i, \mathrm{Id })  \leq  2A \end{align*} 
The inequalities on the second and fourth lines follow from the triangle inequality where when $i=1$, $\phi^{\frac{N-i+1}{N}}_F \prod_{j=1}^{i-1} \psi_j \phi^{\frac{1}{N}}_{F} \psi_j^{-1}$ should be understood as $\phi^{\frac{N}{N}}_F=\phi^1_F$.
The equalities on the third and fifth lines follow from the bi-invariance of Hofer's metric and the inequality on the final line follows from the fact that we picked $\psi_i$ such that $d_H(\psi_i, \mathrm{Id}) \leq \frac{A}{N}$. 
\end{proof}

\subsection{The case of the disc}\label{sec:disc}
We now use the map $T$ to prove that 
the group $N(\D^2)$ from Proposition \ref{prop:N-disc} is proper in the kernel of Calabi on $\mathrm{Hameo}(\mathbb{D}^2,\omega)$. 

\begin{proof}[Proof of properness of $N(\D^2)$.]
 We will do this in two steps.

\medskip

{\bf Step 1.}  We claim that the sequence $\displaystyle  | k  ( f_k(T) -\mathrm{Cal}(T) )  |$ is unbounded, where $T \in \mathrm{Hameo}(\mathbb D^2, \omega)$ is as in Proposition \ref{prop:T-hameo}.

Recall the (non-smooth) function $H$ from \eqref{eqn:hamT} which we used in the definition of $T$. Let $H_n(z)$ be a sequence of smoothings of $H$,  depending only on $z$, that agree with $H$ except for $-1 \leq z \leq -1 + \frac{1}{2^{2n}} $.  One could, for example, take $H_n$ to be as in the proof of Proposition \ref{prop:T-hameo}.

Note that $\phi^1_{H_n} \xrightarrow{C^0} T$ and so, by the $C^0$ continuity property of the $f_k$, we have $$f_k(T)  = \lim_{n \to \infty} f_k (\phi^1_{H_n}).$$
Now, since $H$ and the $H_n$ depend only on $z$, we can compute $f_k(\phi^1_{H_n})$ using the Lagrangian Control property; see Section \ref{sec:prelim-quasis}.  We have $$f_k(\phi^1_{H_n}) = \frac{1}{k} \sum_{i=1}^k H_n(-1 + 2\frac{i}{k+1}).$$
Since $H_n = H$, except for $-1 \leq z \leq -1 + \frac{1}{2^{2n}}$, for $n$ large enough we have 
$f_k(\phi^1_{H_n}) = \frac{1}{k} \sum_{i=1}^k H(-1 + 2\frac{i}{k+1})$ and hence 

$$f_k(T) = \frac{1}{k} \sum_{i=1}^k H(-1 + 2\frac{i}{k+1}) = \frac{1}{k} \sum_{i=1}^k h(-1 + 2\frac{i}{k+1}).$$

Recall that $\mathrm{Cal}(T) = \frac{1}{2} \int_{-1}^1 h(z) \, dz < \infty$.
 Hence, proving that $k f_k(T) - k \mathrm{Cal}(T)$ is unbounded, is equivalent to proving that the sequence whose kth term is given by  
\begin{equation}\label{eqn:prove_unbdd}
2(k f_k(T) - (k+1) \mathrm{Cal}(T))=2\sum_{i=1}^k h(-1 + 2\frac{i}{k+1}) - (k+1) \int_{-1}^1 h(z) \, dz
\end{equation} 
 is unbounded; we will prove this below.

Write $a_i = -1 + 2\frac{i}{k+1}$, for $ i = 0, \ldots, k+1$.  Observe that \eqref{eqn:prove_unbdd} can be rewritten as 
\begin{align*}
(k+1) \sum_{i=1}^k   \left( \int_{a_{i-1}}^{a_{i}} h( a_i) - h(z)  dz \right) \; - (k+1)\int_{a_k}^{a_{k+1}} h(z) dz.
\end{align*}
The term  $\int_{a_k}^{a_{k+1}} h(z) dz$ is zero since $h$ is supported in $-1 \leq z \leq - \frac{1}{2}$. So we must prove unboundedness of the sum.

Since $h$ is a convex function, we have $h( a_i) - h(z) \leq  h'(a_i) (a_i-z)$. Thus

\begin{align*}
(k+1) \sum_{i=1}^k   \left( \int_{a_{i-1}}^{a_{i}} h( a_i) - h(z)  dz \right)  &\leq
(k+1) \sum_{i=1}^k  \left( h'(a_i)  \int_{a_{i-1}}^{a_{i}} (a_i-z) dz \right) \\
                                                                           = (k+1) \sum_{i=1}^k \left( h'(a_i) \frac{2}{(k+1)^2} \right) 
                                                                           &= \sum_{i=1}^k \left( h'(a_i) \frac{2}{k+1} \right).
\end{align*}

Now, since $h'$ is non-decreasing, we have
\begin{align*}
  \sum_{i=1}^k \frac{2}{k+1} h'(a_i)\leq  \sum_{i=1}^k \int_{a_i}^{a_{i+1}}h'(z)dz
  =\int_{a_1}^1 h'(z)dz = -h(a_1) \stackrel{k\to \infty}{\longrightarrow} -\infty.
\end{align*}
This shows that 
$(k+1) \sum_{i=1}^k   \left( \int_{a_{i-1}}^{a_{i}} h( a_i) - h(z)  dz \right) $
is unbounded and concludes the proof of unboundedness of the sequence in \eqref{eqn:prove_unbdd}.

\medskip
 
 {\bf Step 2.} Let $\Theta \in \mathrm{Diff}_{\mathrm{c}}(\mathbb{D}^2,\omega)$ be such that 
 $\mathrm{Cal}(\Theta) =\mathrm{Cal}(T)$ 
 and define
\[ \psi' =  T \circ \Theta^{-1}.\]
Then, $\mathrm{Cal}(\psi') = 0$.  We will show that the sequence $\vert k f_k(\psi') ) \vert$ is unbounded which implies that $\psi' \notin N(\mathbb D^2)$ and hence establishes properness of $N(\mathbb D^2)$.

By Lemma~\ref{lem:defect} 
\[ |k f_k(\psi') - k f_k(T) - k f_k(\Theta^{-1})| \le 2 .\]
Now, we claim that the sequence $ \{ k f_k(T) + k f_k(\Theta^{-1}) \}_k$ is unbounded which, in combination with the above inequality, implies that the sequence $\vert k f_k(\psi') ) \vert$ is unbounded.  

The fact that $ \{ k f_k(T) + k f_k(\Theta^{-1}) \}_k$ is unbounded  is an immediate consequence of Theorem \ref{t:muO(1)}:  the sequence $\{  k \cdot (f_k(\Theta^{-1}  ) -\mathrm{Cal}(\Theta^{-1}) )\}_k$ is bounded, by the theorem, and the sequence $\{ k \cdot (f_k(T) -\mathrm{Cal}(T) ) \}_k$ is unbounded, by Step 1. Hence, the sum of these two sequences, which is exactly $\{ k f_k(T) + k f_k(\Theta^{-1})\}_k$, is unbounded.   This completes the proof of the fact that $\psi' \notin N(\mathbb D^2)$.
\end{proof}

We have now established that the group $H$, which is the kernel of Calabi on $\mathrm{Hameo}(\mathbb{D}^2,\omega)$, is not perfect.  To complete the proof of Theorem~\ref{thm:notsimple},  we will now 
produce a surjective group homomorphism from $H$ to $\mathbb{R}$.

\begin{proof} [Proof that there is a surjective homomorphism to $\mathbb{R}$.]

Choose the map $\Theta$ in Step $2$ above to commute with $T$ and to be generated by an autonomous Hamiltonian.  Then, $\psi'$, which recall is defined to be $T \circ \Theta^{-1}$, is the time-$1$ map of a one-parameter subgroup, say $\psi'_t$, because $T$ and  $\Theta$ are time-$1$ maps of commuting one-parameter subgroups. The image $N'$ of this $\psi'_t$ in the quotient $H'$ of 
$H$ by $N(\mathbb D^2)$ is isomorphic to $\mathbb{R}$.  We now claim that the identity map $N' \to N'$ extends to a homomorphism $H' \to N'$, which implies the existence of the desired surjective homomorphism, since $H$ surjects onto $H'$.  

To see that the identity map on $N'$ extends, note first of all that $N'$ is divisible.  Hence,  by a standard argument using Zorn's lemma (see e.g. \cite[Ch. XX, Sec. 4]{lang}), $N'$ is an injective object in the category of Abelian Groups, meaning that if $f: A \to N'$ is any homomorphism out of an abelian group $A$ and if $B$ is any abelian group containing $A$, then $f$ extends to $B$.  This implies that the identity map extends by taking $A = N'$, $f = \id $, and $B = H'$.  \end{proof}

\subsection{The case of the sphere}\label{s:sphere_case}
Recall, from Proposition \ref{prop:N-sphere},  the normal subgroup
$$N(\mathbb{S}^2) := \{ \psi \in \mathrm{Hameo}(\mathbb{S}^2, \omega) :  \text{the sequence }  \vert (2^k - 1) g_k(\psi) \vert \text{ is bounded}  \},$$
where $g_k$ is the quasimorphism defined by \eqref{eqn:gk}.  

We will show that $N(\mathbb{S}^2)$ is proper by showing that a variant $T'$ of the map $T$ from Proposition \ref{prop:T-hameo} is in $\mathrm{Hameo}(\mathbb{S}^2,\omega)$ but not in $N(\mathbb{S}^2)$.

\begin{proof}[Proof of properness of $N(\S^2)$]

Let $T'$ be the time-$1$ flow of $F(\theta, z) = f(z) = \sqrt{\frac{2}{1+z}}$, away from the south pole, and we set $T'(p_-) = p_-$.   We claim that $T' \in \mathrm{Hameo}(\mathbb{S}^2, \omega)$;  this follows directly from Proposition \ref{prop:T-hameo} via the observation that  $T' \circ T^{-1}$  is smooth. 

Define
\[ S(k) = \sqrt{2^k} \sum^{2^k - 1}_{i=1} \sqrt{ \frac{1}{i} }.\]
Then, arguing as in Step 2 of Section \ref{sec:disc},  it can be shown that
\begin{equation}\label{eqn:gk-T'}
( 2^{k} -1) g_k(T') = S(k) - \frac{2^k -1}{2^{k-1} - 1}S(k-1) = S(k) - 2 S(k-1) - \frac{1}{2^{k-1}-1}S(k-1).
\end{equation}  

We will only provide an outline of the proof of the above formula  as its derivation is similar to what was done  in Section \ref{sec:disc}. Here is the outline:   take an appropriate sequence of smoothings $F_n(z)$ of $F(z)$ which coincide with $F$ away from a small neighborhood of $p_-$.  Then, \eqref{eqn:gk-T'} follows from the following two items
\begin{enumerate}
\item $\phi^1_{F_n} \xrightarrow{C^0} T'$ and so, by the $C^0$ continuity of $g_k$, we have $g_k(T') = \lim_{n \to \infty} g_k(\phi^1_{F_n}).$
\item using the Lagrangian Control property, one obtains that 
$$g_k(\phi^1_{F_n}) = \frac{1}{2^k - 1} S(k) - \frac{1}{2^{k-1} - 1}S(k-1).$$
\end{enumerate}
The calculation in the second item, via the Lagrangian Control property, holds because we can assume that the link for $\mu_{2^k - 1}$ has the form $\lbrace z = -1 + \frac{i}{2^{k-1}}\rbrace$, where $i$ ranges from $1$ to $2^k - 1$.  For $z$ corresponding to such an $i$, $\sqrt{\frac{2}{1+z}} = \sqrt{\frac{2^k}{i}}$, and this is the value of $F_n$ on the link for sufficiently large $n$.

It follows from \eqref{eqn:gk-T'}  that to show that $T'$ is not in $N(\mathbb{S}^2)$, we need to estimate the difference
$$S(k) - 2 S(k-1) - \frac{1}{2^{k-1}-1}S(k-1).$$
The crux of the issue is showing that $S(k) - 2 S(k-1)$ is unbounded.  To see this, write
\begin{align*}
  S(k) - &2 S(k-1) = \sqrt{2^k} \sum^{2^k-1}_{i=1} \sqrt{\frac{1}{i}} - 2 \sqrt{2^k} \sum^{2^{k-1} - 1}_{i=1} \sqrt{\frac{1}{2i}}\\
& \ge \sqrt{2^k} \left( 1 - \sqrt{\frac{1}{2}}\right),
\end{align*}
which is unbounded in $k$.

To complete the proof, it therefore remains to show that the term $\frac{1}{2^{k-1}-1}S(k-1)$ is bounded in $k$.  To do this, we write 
\[ \frac{1}{2^{k-1}-1}S(k-1) = \frac{2^{k-1}}{2 \cdot 2^{k-1}-1}\frac{S(k-1)}{2^{k-2}}.\]
The term $\frac{S(k-1)}{2^{k-2}}$ differs from the right Riemann sum, for the integrable function $\sqrt{\frac{2}{1+z}}$ on $-1 \le z \le 1$, by $\frac{1}{2^{k-2}}$, hence $\frac{1}{2^{k-1}-1}S(k-1)$ is bounded in $k$.  We conclude from this the sequence $(2^k - 1) g_k(T')$ is unbounded and hence $T' \notin \mathrm{Hameo}(\mathbb S^2, \omega)$.  This completes the proof of Theorem \ref{thm:notsimple}.  
\end{proof}

The analogous argument as in the disc case shows from this that there is a surjection to $\mathbb{R}$.


\section{Infinitely many extensions of Calabi}\label{s:extension}

Having applied the two-term Weyl law to study the normal subgroup structure of $G = \Homeo_{\mathrm{c}}(\mathbb{D}^2,\omega)$, we now invoke related asymptotic considerations to prove Theorem~\ref{thm:cal}, which we recall for the reader states that the Calabi homomorphism admits infinitely many extensions to $G$.  We also elaborate on the promise from the introduction that this perspective has value in identifying new normal subgroups whose quotients can be computed.  We note for the benefit of the reader that while this section is thematically linked to the previous one, it does not cite results from there and so can be read independently.

\subsection{The main theorem}

We begin with the promised proof of Theorem~\ref{thm:cal}, which collects considerations of the asymptotics of the $f_k$ via a short argument. 

\begin{proof}[Proof of Theorem~\ref{thm:cal}]
Define the group $R' := \mathbb{R}^{\mathbb{N}} / \sim$, where $s \sim t$ if and only if $s - t$ has limit $0$.  
There is a natural map 
\begin{equation}
\label{eqn:defns} 
S: G \to R', \quad \quad g \to (f_2(g), f_3(g), \ldots, f_n(g), \ldots).
\end{equation}
(We have not included $f_1$ here, because as we have defined it, it is $0$.)  
By Lemma~\ref{lem:defect}, this is a group homomorphism.
There is also a canonical homomorphism 
\[ \Delta: \mathbb{R} \to R', \quad x \mapsto (x,x,\ldots,x).\]

Now by the Weyl law \eqref{eqn:weyl}, we have
\begin{equation}
\label{eqn:exten}
 S(h) = (\Cal,\Cal,\Cal,\ldots)
\end{equation}
for every $h\in\Diff_{\mathrm{c}}(\D^2,\omega)$.

We now find a section of the map $\Delta$, as follows.  The group $R'$ is a vector space over $\mathbb{R}$.  Take the vector $v_1 = (1,\ldots,1,\ldots) \in R'$;  by Zorn's Lemma, we can extend this to a basis $\beta$ for $R'$.  The section of $\Delta$ now comes from the splitting of $R'$ with respect to this basis.    More precisely, we define 
\[ s: R' \to \mathbb{R}, \quad s(v) = a_1, \quad \quad v = a_1 v_1 + \sum_{v_i \in \beta, v_i \ne v_1} a_i v_i.\]
It now follows from \eqref{eqn:exten} that $s \circ S$ is the desired extension.  Since there are infinitely many choices of extensions $\beta$, and the map $S$ is surjective (see Proposition~\ref{prop:surj} below), it follows that there are infinitely many extensions.  
\end{proof}

\begin{rem}
\label{rem:zorn}
One might wonder to what degree 
Zorn's lemma is actually necessary in extending 
the Calabi homomorphism to a group homomorphism $\Homeo_c(\mathbb{D}^2,\omega)\to \mathbb{R}$.  As communicated to the authors by C. Rosendal, there are models of set theory where the axiom of choice is false and every homomorphism between Polish groups is continuous; in particular, no extension of $\Cal$ to a group homomorphism $\Homeo_c(\mathbb{D}^2,\omega)\to \mathbb{R}$ 
exists
in those models. 
See \cite[Thm. 5.]{Rosen} and \cite{Rosen2}.  
\end{rem}

\begin{rem}
\label{rem:ext}
In \cite{CGHMSS21}, the Calabi invariant was previously extended to a homomorphism $Cal$ on $\Hameo(\D^2,\omega)$ by the rule \[ H \mapsto \int H \omega dt,\]
where $H$ is any Hamiltonian for a given hameomorphism; such a Hamiltonian is not unique, but \cite{CGHMSS21} showed that this extension does not depend on the choice of Hamiltonian.  Any of the extensions to $\Homeo_{\mathrm{c}}(\D^2,\omega)$ in Theorem~\ref{thm:cal} agree with this extension when restricted to Hameo; this follows from the fact that, similarly to \eqref{eqn:exten},
\begin{equation}
\label{eqn:calham}
 S(h) = (\Cal,\Cal,\Cal,\ldots),
\end{equation}
for any $h\in\Hameo(\D^2,\omega)$.

To see why the above equation is true, we note that, as in the proof of \cite[Thm.\ 1.1]{CGHMSS21}, if $h \in \Hameo(\D^2,\omega)$ and $H:[0,1]\times \D^2\to\R$ is a $C^0$ Hamiltonian for $h$, then, for any $\varepsilon > 0$, we can find smooth Hamiltonians $G_m$ such that 

\begin{enumerate}
\item $\phi_{G_m}^1$ converges to $h$ in the $C^0$ topology,
 \item $G_m$ uniformly converges to  $H$.
\end{enumerate}

Then, 

\[ |f_k(h) - f_k(G_m)| < \varepsilon \quad\text{and}\quad \left| \int G_m - \int H \right| < \varepsilon,\]
where in the first inequality above we have used the Hofer continuity property; see \ref{thm:mu-k}.  Since $\varepsilon > 0$ is arbitrary, \eqref{eqn:calham} follows from the above inequalities and \eqref{eqn:weyl}.  

\end{rem}

\subsection{Normal subgroups with explicit quotients}
\label{sec:newnormal}

It has been an open question since the proof of the simplicity Conjecture mentioned in the introduction to identify the quotient of $G$ by the normal subgroup of finite energy homeomorphisms $\mathrm{FHomeo}$ constructed there; see \cite{CGHS20}.  The circle of ideas around the proof of Theorem~\ref{thm:cal} allows us to resolve a variant of this question: we can find normal subgroups whose quotient can be calculated.

For example, define $N$ to be the kernel of the map $S$ from \eqref{eqn:defns}.  

\begin{prop}
\label{prop:surj}
The map $S$ from \eqref{eqn:defns} is surjective.  In particular, $G/N \simeq R'$.
\end{prop}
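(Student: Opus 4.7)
The plan is to show that any target sequence $(a_k)_{k \geq 2} \in \mathbb{R}^{\mathbb{N}}$ can be represented in $R'$ by $S(g)$ for some $g \in G = \Homeo_c(\D^2,\omega)$, that is $f_k(g) - a_k \to 0$. The key tool will be the Lagrangian control formula \eqref{eq:lag-control-fk}: for a radial Hamiltonian $H(\theta,z) = h(z)$ supported in the hemisphere, $f_k(\phi^1_H) = \frac{1}{k}\sum_{i=1}^k h(z^k_i)$ where $z^k_i = -1 + 2i/(k+1)$. Thus the problem reduces to producing a (possibly singular) function $h$ on $(-1, 0]$ whose Riemann-type sums on the grids $\{z^k_i\}$ match $(a_k)$ up to $o(1)$.

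My approach will be to construct $h$ as a sum $h = \sum_{k \geq 2} c_k \chi_k$, where $\chi_k$ is a smooth bump of height $1$ supported in a small neighborhood of the leftmost sample point $z^k_1 = -1 + 2/(k+1)$; these centers are distinct and accumulate only at $z = -1$. The coefficients $c_k$ will be determined recursively from the arithmetic observation that $z^k_1 = z^j_l$ if and only if $j + 1 = l(k+1)$ for some $l \in \mathbb{N}$, which forces $j \geq k$. Consequently, once the widths $\delta_k$ are chosen small enough, $\chi_k$ contributes to $f_j$ only for $j \geq k$, yielding a triangular linear system solvable for $c_j$ in terms of $a_j$ and the $c_k$ with $k < j$.

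I will then verify that the resulting $g := \phi^1_h$ lies in $G$. Since $h$ is smooth on $(-1, 0]$ and the flow preserves every circle $\{z = \mathrm{const}\}$ while fixing $p_-$, the same mechanism as in Proposition~\ref{prop:T-hameo} will show that $g$ extends to an area-preserving homeomorphism of $\D^2$; it is compactly supported provided $h$ vanishes in a neighborhood of the boundary $z = 0$, which can be arranged since every $\chi_k$ is supported strictly below $z = 0$.

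The hard part will be controlling the error in the recursion. Because the rational sample points are dense in $[-1,1]$, for any positive width $\delta_k$ the support of $\chi_k$ will inevitably contain other ``near-miss'' sample points $z^j_l$ for $j$ sufficiently large, contributing to $f_j$ even when $j \neq k$. I plan to make the widths $\delta_k$ decay super-polynomially so that the contribution of each $\chi_k$ to $f_j$ is tame, and to absorb any residual bounded drift by composing $g$ with a smooth Hamiltonian diffeomorphism of suitable Calabi, which merely shifts $S(g)$ by a constant sequence (permissible in $R'$). Making this error analysis rigorous, so that the final sequence $(f_k(g) - a_k)_k$ genuinely lies in $c_0$, is the main technical step of the argument.
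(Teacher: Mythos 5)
Your proposal shares the paper's core strategy: build a radial Hamiltonian $h(z)$ on $\S^2$ that is singular at a pole, and use the Lagrangian control formula \eqref{eq:lag-control-fk} together with $C^0$-continuity of $f_k$ to prescribe the asymptotics. But the decomposition you use is genuinely different, and it is exactly this difference that creates the gap you identify. Your bumps are centered on the \emph{finest} sample points $z^k_1 = -1 + \frac{2}{k+1}$, which accumulate at the south pole; as you note, rational sample points are dense, so each bump $\chi_k$ produces near-miss contributions to $f_j$ for all sufficiently large $j$. These near-miss contributions do not vanish: the contribution of $\chi_k$ to $f_j$ is a Riemann sum of $c_k\chi_k$ over the $j$-grid, which converges to $\frac{c_k}{2}\int\chi_k > 0$ as $j\to\infty$. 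So with your recursion (which accounts only for exact hits) you get $f_j(g) - a_j \to E_\infty := \sum_k \frac{c_k}{2}\int\chi_k$, not $0$. Your fix --- composing with a smooth Hamiltonian diffeomorphism of Calabi $E_\infty$ --- is the right idea, but it works only if the residual drift \emph{converges}, not merely stays bounded; and it requires (i) choosing $\delta_k$ small enough relative to $|c_k|$ (which in turn depends on the target sequence) so that $\sum_k |c_k|\delta_k(k+1) < \infty$, to justify a dominated-convergence argument for $E_j \to E_\infty$, and (ii) verifying $|E_\infty| < \infty$. None of this is carried out, and as you acknowledge, the error analysis is the crux.

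The paper avoids all of this by accumulating the construction at the \emph{opposite} pole and in the \emph{opposite} direction through the sample grid. It builds $H$ recursively on the nested intervals $[-1, -1+\frac{2k}{k+1}]$, exploiting the elementary inequality $-1+\frac{2k}{k+2} \le -1+\frac{2k}{k+1}$: this guarantees that at stage $k+1$ all sample points $z^{k+1}_1,\dots,z^{k+1}_k$ already lie in the region where $H$ is fixed, and only the single new point $z^{k+1}_{k+1}$ needs to be assigned. Consequently each $f_k(H) = s_k$ holds \emph{exactly} for every extension of $H$ beyond stage $k$, and there is no near-miss error, no interchange of limits, and no residual drift to absorb. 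This is the key structural idea you are missing: rather than pin the fine grid points near the singularity (which forces you to fight density of the other grids), pin the \emph{coarse} ones near the identity and let the singularity develop where the grids have not yet been sampled. Your route can likely be salvaged with careful choices of $\delta_k$ and the Calabi shift, but the paper's recursion is substantially cleaner and yields exact, rather than asymptotic, control.
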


\begin{proof}

Given an element $s \in \mathbb{R}^{\mathbb{N}}$, we define a smooth autonomous Hamiltonian $H$ on the complement of the north pole $p_+ \in \S^2$, and depending only on $z$, recursively as follows.  

Call $s_i$ the $(i-1)^{st}$ component of $s$.  To motivate what follows, note that, given $k$, we can take our Lagrangian Link to correspond to the set $\lbrace z = -1 + \frac{2i}{k+1} : 1 \le i \le k \rbrace.$
Fix also the data of a smooth function $E:[0,1] \to \mathbb{R}$ that is constant near $0$ and $1$ and satisfies $E(0) = 0$ and $E(1) = 1$.

We start by defining $H$ to be equal to $0$ on $\lbrace -1 \le z \le 0 \rbrace$.
Next, we define $H$ to be equal to $2 s_2$ on $\lbrace z = 1/3 \rbrace$.  We now extend $H$ to a smooth function on $\lbrace -1 \le z \le 1/3 \rbrace$ by defining it to be equal to $ H(1/3) E( \frac{z}{1/3} )$ on $\lbrace 0 \le z \le 1/3 \rbrace.$  Note that for any extension $H'$ of $H$ to a smooth function of $z$ on the entire interval $[-1,1]$,  we have $f_2(H')=s_2$,
by the Lagrangian Control property (\ref{eq:lag-control-fk}). 

Now assume, inductively, that we have extended 
$H$ to a smooth function on $\lbrace -1 \le z \le -1 + \frac{2k}{k+1} \rbrace$, for some $k \ge 2$, that is constant near the endpoints of this interval and satisfies 
$$f_i(H) = s_i, \; \; 2 \leq i \leq k$$
for any further extension of $H$ to a smooth function on $[-1,1]$.  We seek to extend $H$ to a smooth function on $\lbrace -1 \le z \le -1 + \frac{2k+2}{k+2} \rbrace$ that is also constant near the endpoints of this interval and satisfies 

$$f_i(H) = s_i, \; \; 2 \leq i \leq k+1$$
for any further extension of $H$ to a smooth function on $[-1,1]$.  Note, first of all, that $-1 + \frac{2k}{k+2} \le -1 + \frac{2k}{k+1}$.  In particular, the equation
\[ H(-1 + \tfrac{2k+2}{k+2}) =  (k+1) s_{k+1} - \sum^{k}_{i=1} H(-1 + \tfrac{2i}{k+2}),\]
makes sense, and we use it to define $H$ on $\lbrace z =  -1 + \frac{2k+2}{k+2} \rbrace.$  We therefore have a function $H$ defined on $\lbrace -1 \le z \le -1 + \frac{2k}{k+1} \rbrace \cup \lbrace z = -1 + \frac{2k+2}{k+2} \rbrace$, which is smooth on the first of these sets and constant near the endpoints of the first of these sets.  Since $-1 + \frac{2k}{k+1} < -1 + \frac{2k+2}{k+2},$ there is no obstruction to further extending $H$ smoothly to $\lbrace -1 \le z \le -1 + \frac{2k+2}{k+2} \rbrace:$ more precisely, we define $H$ to be 
\[ \left(H(-1 + \frac{2k+2}{k+2}) - H(-1 + \frac{2k}{k+1}) \right) E\left( \frac{ z - (-1 + \frac{2k}{k+1}) }{\frac{2k+2}{k+2} - \frac{2k}{k+1} } \right) + H(-1 + \frac{2k}{k+1})\]
 on $\lbrace -1 + \frac{2k}{k+1} \le z \le -1 + \frac{2k+2}{k+2} \rbrace.$
  
As above, we note that any further extension of $H$ to a smooth function on $\lbrace -1 \le z \le 1 \rbrace$ will have
\begin{equation}
\label{eqn:key-induction}
f_{k+1}(H)=s_{k+1},
\end{equation} 
by the Lagrangian Control property (\ref{eq:lag-control-fk}).

Given an element $s \in \mathbb{R}^{\mathbb{N}}$, we now define $\psi$ to be the time-$1$ flow of the Hamiltonian $H$ constructed above, away from $p_+$, and we set $\psi(p_+) = p_+$.  We can view this as a compactly supported homeomorphism of the disc, which we also denote by $\psi$, and we claim that $S(\psi) = s:$ indeed, for any fixed $k$, we can approximate $\psi$ in $C^0$ by smooth flows corresponding to Hamiltonians that depend only on $z$, without changing the values of $H$ on the components $\lbrace z = -1 + \frac{2i}{k+1} \rbrace$ of the Lagrangian Link, hence the claim follows from \eqref{eqn:key-induction} together with the $C^0$ continuity of  $f_k$ (Theorem ~\ref{th:C0continuity-fk}).  
\end{proof}

\begin{rem}
For a more familiar presentation of $G/N$ via Proposition~\ref{prop:surj}, we note that the group $R'$ is isomorphic to $\mathbb{R}$.  Indeed, both are uncountable vector spaces over $\mathbb{Q}$ of the same cardinality.
\end{rem}

\begin{rem}
The map $S$ allows us to define many other subgroups whose quotients can be identified.  Indeed, we can take any subgroup $H \subset R'$, and then by Proposition~\ref{prop:surj}, $N_H := S^{-1}(H)$ will be a normal subgroup with quotient $H$.  One can think of the different $N_H$ as ``leading asymptotics subgroups": they correspond to different prescriptions of the leading asymptotics of the $f_k$.  We may also produce groups by varying the target of $S$ by taking different quotients of $\mathbb{R}^\mathbb{N}$.  For example, if we quotient by the relation that $s \sim t$ if and only if $s-t$ remains bounded and take this to be the target of $S$, then the induced homomorphism out of $G$ is still surjective, but one can show that its kernel contains $\FHomeo$ and $\Hameo,$ as introduced in the discussion at the end of the introduction.
\end{rem}


\section{The commutator group of $G$ is simple}\label{s:simple}

The goal of this section is to prove Theorem \ref{thm:comm}. We denote by $G$ the kernel of the mass flow homomorphism $\Homeo_{\mathrm{c}}(\Sigma, \omega)\to \R$, where $\Sigma$ is a surface either compact or the interior of a compact surface with boundary. We denote by $[G,G]$ the commutator subgroup, i.e.\ the subgroup generated by commutators. We will denote by $[f,g]=f^{-1}g^{-1}fg$ the commutator of two elements $f$ and $g$. Theorem \ref{thm:comm} asserts that $[G,G]$ is simple.

As was mentioned in the introduction, it is known (see footnote \ref{footnote:[G,G]}) that any normal subgroup of $G$ contains $[G,G]$ 
and in particular 
 the commutator group of $[G,G]$, which is normal in $G$, contains $[G,G]$, hence 
 $[G,G]$ is perfect.
Another consequence of this fact is  the simplicity of $[G,G]$ (Theorem \ref{thm:comm}) follows from the next lemma. 

\begin{lem}\label{lem:normalGG-normalG} Any normal subgroup of $[G,G]$ is normal in $G$.
\end{lem}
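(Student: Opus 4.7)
The plan is to construct, for every $g\in G$ and every $n\in N$, a decomposition $g = c\cdot h$ with $c\in [G,G]$ and $h\in G$ satisfying $hn=nh$. Once such a decomposition is available,
\[
gng^{-1} \;=\; chnh^{-1}c^{-1} \;=\; cnc^{-1} \;\in\; N,
\]
since $c\in [G,G]$ normalizes $N$ by hypothesis, which is what is required.

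To build $h$, I would invoke Fathi's fragmentation theorem for $G$ and write $g = g_1\cdots g_k$ where each $g_i\in G$ is supported in an open disc $D_i\subset\Sigma$ of arbitrarily small area. The next step is to transport each $g_i$ by a conjugation inside $G$ so as to move its support off of $\supp(n)$: for each $i$ I would select $\phi_i\in G$ with $\phi_i(D_i)\subset\Sigma\setminus\supp(n)$, which is possible as soon as $\Sigma\setminus\supp(n)$ contains a disc of area $\mathrm{area}(D_i)$. Set $\tilde g_i := \phi_i g_i\phi_i^{-1}$. Then $\tilde g_i$ is supported in $\phi_i(D_i)$ and so is the identity on $\supp(n)$, while the elementary identity $\tilde g_i\,g_i^{-1} = [\phi_i,g_i]$ shows $\tilde g_i \equiv g_i\pmod{[G,G]}$.

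Letting $h := \tilde g_1\cdots\tilde g_k$, each factor fixes $\supp(n)$ pointwise, and because $n$ preserves $\supp(n)$ setwise one checks directly that $hn=nh$. Since $G/[G,G]$ is abelian we also get $h\equiv g\pmod{[G,G]}$, whence $c:=gh^{-1}\in [G,G]$, completing the decomposition and therefore the argument.

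The hard part will be arranging the displacing elements $\phi_i\in G$ with the required properties. When $\Sigma$ is the interior of a compact surface with boundary, any compactly supported $n$ leaves ample room in the complement of $\supp(n)$ for small discs, and the construction proceeds straightforwardly. When $\Sigma$ is closed—$\S^2$ being the case of interest here—the support of $n$ could a priori be dense in $\Sigma$, and the above must be supplemented by a preliminary reduction. I would attempt such a reduction using the commutator identity $[g, n_1 n_2] = [g, n_1]\cdot n_1[g,n_2]n_1^{-1}$, coupled with a careful fragmentation of the problem near individual $n$-orbits, exploiting that $[G,G]$ acts sufficiently transitively on $\Sigma$ to find conjugates of $g$ whose supports collectively avoid the support of $n$ modulo $[G,G]$; this is the delicate geometric step on which the proof hinges.
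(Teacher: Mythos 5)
Your decomposition $g = ch$ with $c\in[G,G]$ and $h$ commuting with $n$ is correct, and the mechanism behind it (fragmentation of $g$ into pieces supported in small discs, displacement of each piece off $\supp(n)$ by a conjugation, recording the discrepancy as a commutator) is essentially the paper's first two steps, just packaged as a factorization of $g$ rather than as the conjugation identity $g^{-1}hg=[f^{-1},g]^{-1}h[f^{-1},g]$ the paper uses. Up to the point where $\Sigma\setminus\supp(n)$ contains a disc of positive area, your argument is complete and sound.

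The genuine gap is precisely the one you flag at the end, but the direction you sketch for closing it cannot work. When $\Sigma$ is closed one can have $\supp(n)=\Sigma$, and then \emph{no} conjugate of $g$, nor of any nontrivial fragment of $g$, can have support disjoint from $\supp(n)$; transitivity of $[G,G]$ is irrelevant, and ``conjugates of $g$ whose supports collectively avoid $\supp(n)$ modulo $[G,G]$'' does not resolve to a meaningful statement. What is actually required is to decompose $n$ itself: one must write $n=n_1n_2$ with $n_1,n_2$ \emph{both in $N$} and both of non-full support, after which your earlier argument applies to each factor separately and $gng^{-1}=(gn_1g^{-1})(gn_2g^{-1})\in N$. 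Producing such $n_1,n_2\in N$ is the real content of the missing step and it is not formal: the paper's Lemma~\ref{lem:lem-in-simplicity} obtains it by invoking the Arnold conjecture for area-preserving surface homeomorphisms (Matsumoto) to find a fixed point $z$ of $n$, then running a double-commutator construction with elements of $[G,G]$ supported near $z$ to manufacture $\ell\in N$ with $\supp(\ell)\neq\Sigma$ agreeing with $n$ near $z$, and finally setting $n_1=n\ell^{-1}$, $n_2=\ell$. Your hint at the identity $[g,n_1n_2]=[g,n_1]\cdot n_1[g,n_2]n_1^{-1}$ points in the right direction insofar as it involves splitting $n$, but it offers no mechanism for guaranteeing the factors lie in $N$ with proper support, which is where the actual difficulty and the appeal to a fixed-point theorem reside.
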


\begin{proof} Let $H$ be a normal subgroup of $[G,G]$. To prove that $H$ is normal in $G$, we need to prove that for all $h\in H$ and $g\in G$, the conjugate $g^{-1}hg$ belongs to $H$. 
\paragraph{\textbf{First step.}}  Let $h\in H$ and $g\in G$. We will prove that  $g^{-1}hg\in H$ under the condition:
\[
\tag{I} \text{The open set $U=\Sigma\setminus \supp(h)$ is non empty.}
\]

Let $S$ be an embedded disc included in $U$. We will show that $g$ may be written as a composition $g=a b$, with $a$ supported in $S$ and  $b\in[G,G]$. Since $a$ and $h$ have disjoint supports, we have $a^{-1}ha=h$. Using that $H$ is normal in $[G,G]$ we deduce $g^{-1}hg=b^{-1}hb\in H$ as claimed.

To prove the decomposition of $g$,  
let $(S_i)_{i\in I}$ be a finite open cover of $\Sigma$ by discs of the form $S_i=f_i(S)$ for some $f_i\in G$. By Fathi's fragmentation theorem \cite[Thm. 6.6]{fathi} the map $g$ can be written as a product $g=g_1\cdots g_N$ of elements in $G$ such that each $g_\alpha$ is supported in a disc $S_{i_\alpha}$ for some $i_\alpha\in I$. 
Such $g_\alpha$ can then be written in the form
\[g_{\alpha}=(f_{i_\alpha}^{-1}g_\alpha f_{i_\alpha})\circ [f_{i_\alpha},g_\alpha].\]
Since $f_{i_\alpha}^{-1}g_\alpha f_{i_\alpha}$ is supported in $S$, this shows that each $g_\alpha$ may be represented in the quotient $G/[G,G]$ by an element supported in $S$.
As a consequence, their product $g$ may also be represented in $G/[G,G]$ by an element $a$ supported in $S$. This exactly means that $g=ab$ for some $b\in[G,G]$.

\medskip
\paragraph{\textbf{Second step.}} We finally show that $g^{-1}hg\in H$ for any $h\in H$ and $g\in G$. This will rely on the first step and the following lemma. 

\begin{lem}\label{lem:lem-in-simplicity} Let $h\in H$ and let $z$ be a fixed point of $h$ (which exists by Arnold conjecture \cite{Matsumoto}). Then for every sufficiently small open neighborhood $U$ of $z$, there exists  $\ell \in H$ such that $\supp(\ell)\neq\Sigma$ and $\ell$ coincides with $h$ on $U$.  
\end{lem}

We postpone the proof of this lemma and use it to conclude the proof of the second step. Let $h\in H$ and $g\in G$. Let $\ell\in H$ be as provided by Lemma \ref{lem:lem-in-simplicity}. Then, $h\ell^{-1}$ and $\ell$ belong to $H$ and both satisfy condition (I). Our first step shows that $g^{-1}(h\ell^{-1})g\in H$ and $g^{-1}\ell g\in H$. 
As a consequence, their product $g^{-1}hg$ belongs to $H$. This concludes the proof of the second step and of Lemma \ref{lem:normalGG-normalG}.
\end{proof}

\begin{proof}[Proof of Lemma \ref{lem:lem-in-simplicity}.] 
  Let $U$ be a small neighborhood of $z$. How small it is will be made precise below. Since $z$ is fixed, it is known\footnote{This is a standard folklore statement that can be proved by a combination of the Schoenflies and the Oxtoby-Ulam theorem.} that for every open neighborhood  $V$ of $z$, there exists an element $\alpha\in G$ which coincides with $h$ in a neighborhood of $z$ and is supported in $V$. We may assume that $U$ is so small that $\alpha=h$ on $U$. We will use such an $\alpha$ to build our map $\ell$.

  Let $x$ be a point such that $h(x)\neq x$. Note that we may assume without loss of generality that such a point exists. Taking a point $y$ close to $x$ but distinct from $x$, we obtain a configuration of four pairwise distinct points $x$, $y$, $h(x)$, $h(y)$. Let $f\in G$ be such that $f(x)=y$. Let $A$ be an open neighborhood of $x$. If $A$ is chosen small enough, then the four open sets $A$, $B=h(A)$, $C=f(A)$ and $D=h(C)$ are pairwise disjoint. In this situation, it is easy to check that for any $g\in G$ supported in $A$ we have
  \[\supp([f^{-1},g])\subset A\cup C\quad\text{ and }\quad [f^{-1},g]=g \text{ on }A.\] 
Similarly, since $B\cup D=h(A\cup C)$, we have for any $g\in G$ supported in $A$
\[\supp([h^{-1},[f^{-1},g]])\subset A\cup B\cup C\cup D\quad\text{ and }\quad [h^{-1},[f^{-1},g]]=g \text{ on }A.\]
Since $h\in H$ and $H$ is normal in $[G,G]$, the element $[h^{-1},[f^{-1},g]]$ belongs to $H$. Thus, we have shown that any element of $G$ supported in $A$ coincides on $A$ with an element of $H$ supported in $A\cup B\cup C\cup D\neq \Sigma$. We will apply this fact to an appropriate conjugate of the map $\alpha$ from the beginning of the proof.

Let $\beta\in[G,G]$ be a map that sends $z$ to $x$ (this can be found for instance  among diffeomorphisms). Then, if the open sets $U$ and $V$ are chosen sufficiently small, the map $\beta\alpha\beta^{-1}$ is supported in $A$. By the above observation, there exists an element $\gamma\in H$ which coincides with $\beta\alpha\beta^{-1}$ on $A$ and whose support is not the whole of $\Sigma$.
Then, $\ell=\beta\gamma\beta^{-1}$ suits our needs. Indeed, $\ell$ coincides with $\alpha$ on $V$ hence with $h$ on $U$. Moreover, since $H$ is normal in $[G,G]$, $\ell\in H$ and its support is not the whole of $\Sigma$. 
\end{proof}

\def\cprime{$'$}

{\small

\medskip
\noindent Dan Cristofaro-Gardiner\\
\noindent Mathematics Department, University of Maryland, College Park, USA.\\
{\it e-mail}: dcristof@umd.edu
\medskip

\medskip
\noindent Vincent Humili\`ere \\
\noindent Sorbonne Université and Université de Paris, CNRS, IMJ-PRG, F-75006 Paris, France\\
\noindent \& Institut Universitaire de France.\\
{\it e-mail:} vincent.humiliere@imj-prg.fr
\medskip

\medskip
\noindent Cheuk Yu Mak\\
\noindent School of Mathematical and Physical Sciences, Hicks Building, University of Sheffield, Sheffield, S10 2TN, UK\\
{\it e-mail:} c.mak@sheffield.ac.uk

\medskip
 \noindent Sobhan Seyfaddini\\
\noindent Department of Mathematics, ETH Zürich, Rämistrasse 101, 8092, Zürich,
Switzerland. \\
 {\it e-mail:}  sobhan.seyfaddini@math.ethz.ch.

 \medskip
 \noindent Ivan Smith\\
\noindent Centre for Mathematical Sciences, University of Cambridge, Wilberforce Road, CB3 0WB, U.K.\\
{\it e-mail:} is200@cam.ac.uk

}

\end{document}